\documentclass[11pt]{article}
\usepackage{amsfonts}
\usepackage{color}
\usepackage{amsmath,amssymb,comment}
\newtheorem{theo}{Theorem}[section]
\newtheorem{lem}[theo]{Lemma}
\newtheorem{cor}[theo]{Corollary}

\newtheorem{defi}[theo]{Definition}

\newcommand{\mysection}[1]{\section{#1} \setcounter{equation}{0}}
\newcommand{\proof}{{\sc Proof.} \quad}

\newcommand{\be}{\begin{equation} \label}
\newcommand{\ee}{\end{equation}}
\newcommand{\bea}{\begin{eqnarray}\label}
\newcommand{\eea}{\end{eqnarray}}
\newcommand{\bas}{\begin{eqnarray*}}
\newcommand{\eas}{\end{eqnarray*}}
\newcommand{\bit}{\begin{itemize}}
\newcommand{\eit}{\end{itemize}}
\newcommand{\qed}{\hfill$\Box$ \vskip.2cm}
\newcommand{\nn}{\nonumber}

\newcommand{\eps}{\varepsilon}

\newcommand{\supp}{{\rm supp} \, }

\newcommand{\io}{\int_\Omega}

\newcommand{\Del}{\Delta}

\newcommand{\Om}{\Omega}

\newcommand{\ov}{\overline}

\newcommand{\lbal}{\left\{ \begin{array}{l}}
\newcommand{\lball}{\left\{ \begin{array}{ll}}
\newcommand{\ear}{\end{array} \right.}

\newcommand{\abs}{\\[5pt]}

\newcommand{\adb}{\allowdisplaybreaks}
\newcommand{\tm}{T_{max}}
\newcommand{\tme}{T_{max,\eps}}

\newcommand{\tu}{\tilde{u}}
\newcommand{\tT}{\tilde{\Theta}}
\newcommand{\tg}{\tilde{g}}
\begin{document}
\adb
%\enlargethispage{10mm}
%
%
\title{Well-posedness of a stress-augmented Moore-Gibson-Thompson-type model for thermoviscoelastic evolution in a standard linear solid}
\author{
Torben Fricke\footnote{tjfricke@math.uni-paderborn.de}\\
{\small Universit\"at Paderborn, Institut f\"ur Mathematik}\\
{\small 33098 Paderborn, Germany}}
\date{}
\maketitle
\begin{abstract}
\noindent 
This manuscript studies a coupled system of nonlinear parabolic and hyperbolic equations that models thermoviscoelastic materials with explicit internal stress relaxation,
\bas
	\lbal
	\tau u_{ttt} + u_{tt} = (D(\Theta)u_{xt})_x + (C(\Theta)u_x)_x,\\[1mm]
	\Theta_t = \Theta_{xx} + |E(\Theta) u_{xt} + F(\Theta) g_t|^2,\\[1mm]
	\tau g_t + g = D(\Theta) u_{xt} + C(\Theta) u_x.
	\ear
\eas
The system arises from a stress–strain constitutive relation of Zener type, where the internal variable $g$ encodes mechanical memory effects. A key feature is the explicit coupling of stress relaxation to the heat equation through the dissipation term, which differs from standard approaches that eliminate $g$ via algebraic substitution.\abs
Under the assumptions that $\tau>0$ and that $D,C\in C^2([0,\infty))$ are positive and $E,F\in C^1([0,\infty))$, and for suitably regular initial data, we establish local existence and uniqueness of strong solutions. Under additionally strengthened regularity requirements on the initial data and on $D,C$, this solution is moreover shown to be classical. These results extend a corresponding local strong and classical solvability theory developed for the underlying system without the stress variable $g$ to the present, stress-augmented setting.\abs
\noindent {\bf Key words:} Nonlinear acoustics, thermoviscoelasticity, internal stress variable, standard linear solid.\\
{\bf MSC 2020:} 74H20 (primary); 74D10, 74F05, 35L05 (secondary).
\end{abstract}
%
%
%
%
%35D30 Weak solutions to PDEs
%35L05 Wave equation
%74H20 Existence of solutions of dynamical problems in solid mechanics
%74F05 Thermal effects in solid mechanics
%
%
%
\newpage
\section{Introduction}\label{intro}
Beyond the classical theory of thermoviscoelasticity in Kelvin-Voigt materials, beginning in the 1980s \cite{Dafermos82} and continuing through more recent contributions \cite{BadalFriedrichKrucik23,BlanchardGuibe00,MielkeRoubicek20,RackeZheng97,Roubicek09}, the literature has since turned to models involving third-order time derivatives, as characteristic of Moore-Gibson-Thompson (MGT) type relaxation. Already at the linear level, such models exhibit a dichotomy between exponential stability and infinite-time grow-up, governed by the relative size of the parameters involved \cite{ChenIkehata21,DellOroPata17,KaltenbacherLasieckaMarchand11}; nonlinear MGT models have since been studied with regard to well-posedness \cite{KaltenbacherLasieckaPospieszalska12,KaltenbacherNikolic19,RackeSaidHouari21,SaidHouari22}, including a semigroup-based treatment under inclusion of memory effects \cite{AlvesCaixetaSilvaRodrigues18}, with related stability and decay results for the equation with memory obtained in \cite{DellOroLasieckaPata16,LasieckaWang15,LasieckaWang16}, and blow-up phenomena addressed in \cite{ChenPalmieri20,NikolicWinkler24} (see also \cite{ChenIkehata21} for the threshold between decay and blow-up in the dissipative case). Quasilinear couplings of a heat equation to Kelvin-Voigt type mechanical relaxation have moreover been examined with regard to local and global existence theory \cite{ClaesLankeitWinkler25,ClaesWinkler26b,Fricke25,Meyer26,Win26arxiv,Winkler25a}, as well as, more recently, with regard to strict positivity of the temperature \cite{FrickePositivity26,WinCalcVar26}; in both cases the heat equation is of the form obtained from \eqref{e10} below upon setting $F\equiv 0$. The quasilinear MGT system for thermoviscoelastic evolution in a standard linear solid recently studied in \cite{CW26} has subsequently been upgraded to classical solvability in \cite{FW26}, and, for the same system, to global solvability in \cite{BlackWinkler26}; this system forms the immediate starting point of the present manuscript, as detailed below.\abs
Structurally distinct from the Zener/standard-linear-solid framework considered here, Westervelt-Pennes-type systems couple acoustic wave propagation to temperature fields through a second- and, more recently, third-order MGT relative of \eqref{e3}-\eqref{e4} with a differently structured heat source; corresponding local and global well-posedness results have been obtained in \cite{BenabbasSaidHouari24,BenabbasSaidHouari25,BenabbasSaidHouari26,CareagaNikolicSaidHouari25,NikolicSaidHouari22a,NikolicSaidHouari22b}. As neither the mechanical equation nor the heat source in these systems follow a Zener-type argument, we do not draw on this literature further below.\abs
The present manuscript extends the work of \cite{CW26} and \cite{FW26}, by retaining the mechanical stress, which is eliminated algebraically from governing equations, resulting in a two-component system for displacement and temperature. This elimination rests on the assumption, made explicit in \cite{CW26}, that the elastic behavior is dominant as compared to mechanical losses, so that additional and temporally nonlocal contributions to the heat source can be neglected. We show below that retaining the stress as an independent variable $g$ allows this assumption to be dropped, reactivating a dissipative source term that couples the heat equation to the full mechanical relaxation dynamics.\abs 
To describe our specific object of study and its application background, let us consider a material in which the viscoelastic properties are described by a Zener-type model, also known as a standard linear solid (\cite{Gut14,Mai10,Zen48}). Writing $T$ and $S$ for the mechanical stress and strain, the Zener constitutive relation reads
\bea{e1}
T+\tau T_t=c(\tau dS_t+S),
\eea
with relaxation time $\tau>0$, stiffness ratio $d>1$ and the stiffness $c$. Combining this with the equation of motion and the definition of the mechanical strain in one-dimensional settings, we have
\bea{e2}
\rho u_{tt}=T_x\qquad\text{and}\qquad S=u_x,
\eea
where $\rho>0$ is the density and $u$ the displacement, and one obtains the Moore-Gibson-Thompson equation
\bea{e3}
\tau u_{ttt}+u_{tt}=\bigg(\frac{\tau cd}{\rho}u_{xt}\bigg)_x+\bigg(\frac{c}{\rho}u_x\bigg)_x.
\eea
Assuming that $c$ depends on the temperature field $\Theta$, after normalizing $\rho\equiv 1$ and setting $D:=\tau cd$, $C:=c$, we obtain the following parametrization for the differential equation for $u$:
\bea{e4}
\tau u_{ttt}+u_{tt}=(D(\Theta)u_{xt})_x+(C(\Theta)u_x)_x.
\eea
Identifying $g:=T$ and reading \eqref{e1} directly as an evolution equation for $g$ yields
\bea{e5}
\tau g_t+g=D(\Theta)u_{xt}+C(\Theta)u_x.
\eea
The mechanical stress $g$ can thus be retained as an independent unknown without altering the equation governing $u$; in contrast to the elimination in \cite{CW26}, $g$ remains available for coupling to the heat equation.\abs
Following \cite{CW26}, we start from the work $P=T\cdot S_t$, see \cite{BW12}. Inserting \eqref{e1} yields
\bea{e6}
P=cS\cdot S_t+c\tau d S_t^2-\tau S_t\cdot T_t.
\eea
The first term on the right is the stored potential mechanical energy in the material, which is easily seen to vanish on average for harmonic processes; the two remaining terms describe mechanical losses converted into heat. The physically motivated condition $d>1$ ensures that these loss terms combine, following the more refined calculation of \cite{Tau67}, into the manifestly nonnegative mechanical loss density
\bea{e7}
Q=\frac{\tau c}{d-1}\bigg(d S_t-\frac{1}{c}T_t\bigg)^2.
\eea  
In \cite{CW26}, the analogous step proceeds by additionally approximating $T_t\approx cS_t$, justified there under the assumption that elastic behavior dominates mechanical losses; this yields the simpler source $\Gamma(\Theta)u_{xt}^2$. We retain the exact expression above instead, made possible by keeping $g:=T$. Inserting $Q$ as a source into a one-dimensional diffusion equation and using $S_t=u_{xt}$, $T_t=g_t$, we obtain
\bea{e9}
\Theta_t=\Theta_{xx}+\frac{\tau c}{d-1}\bigg(d u_{xt}-\frac{1}{c}g_t\bigg)^2
\eea
with exclusively local sources, whence again assuming temperature-dependent elastic properties, we end up with
\bea{e10}
\Theta_t=\Theta_{xx}+|E(\Theta)u_{xt}+F(\Theta)g_t|^2.
\eea
Formally setting $F\equiv 0$ in \eqref{e10} reduces the source term to $E(\Theta)^2 u_{xt}^2$, matching the structure of the heat equation in \cite{CW26} with $\Gamma=E^2$. Supplementing \eqref{e4}, \eqref{e5} and \eqref{e10} with reasonable initial data and boundary conditions, we are thus led to considering the problem
\be{0}
	\lball
	\tau u_{ttt}+u_{tt}=(D(\Theta)u_{xt})_x+(C(\Theta)u_x)_x,& x\in\Om,\ t>0,\\[1mm]
	\Theta_t=\Theta_{xx}+|E(\Theta)u_{xt}+F(\Theta)g_t|^2,& x\in\Om,\ t>0,\\[1mm]
	\tau g_t+g=D(\Theta)u_{xt}+C(\Theta)u_x,& x\in\Om,\ t>0,\\[1mm]
	u_x=0,\quad \Theta_x=0,\quad g=0,& x\in\partial\Om,\ t>0,\\[1mm]
	(u,u_t,u_{tt},\Theta,g)(x,0)=(u_0,u_{0t},u_{0tt},\Theta_0,g_0)(x),\ & x\in\Om.
	\ear
\ee
Here, $\Om\subset\mathbb{R}$ is an open bounded interval, $\tau>0$ is a constant parameter, $D$, $C$, $E$ and $F$ are given functions on $[0,\infty)$, and $u_0$, $u_{0t}$, $u_{0tt}$, $\Theta_0$ and $g_0$ are prescribed initial distributions of the displacement variable $u=u(x,t)$, its derivatives $u_t$ and $u_{tt}$, the temperature $\Theta$, and the mechanical stress $g$.\abs
As already noted in \cite{CW26}, the functions $D$, $C$, $E$ and $F$ may reflect experimentally observed, possibly unbounded, temperature dependencies of viscoelastic parameters \cite{Fri23}, so that global solutions to \eqref{0} cannot be expected in general; cf. \cite{WinHotspot} for a corresponding blow-up result in a related Kelvin-Voigt simplification.\abs
In formulating this and throughout the sequel, given a bounded interval $\Om\subset\mathbb{R}$ and $p\in(1,\infty]$ we let $W_N^{2,p}(\Om):=\{\varphi\in W^{2,p}(\Om)\ |\ \frac{\partial\varphi}{\partial\nu}=0\text{ on }\partial\Om\}$. We further note that dividing the first equation in \eqref{0} by $\tau$ shows that the equations governing $u$, $u_t$, $u_{tt}$ coincide with those studied in \cite{CW26} upon the substitution
\bea{e11}
\alpha:=\frac{1}{\tau},\qquad \gamma:=\frac{D}{\tau}\qquad\text{and}\qquad \hat{\gamma}:=\frac{C}{\tau}.
\eea
Together with the reduction of the heat equation noted above, this substitution makes precise the sense in which \eqref{0} generalizes the system of \cite{CW26}: the equations for $u$, $\Theta$ and $g$, with $g$ eliminated, are recovered exactly, and $g$ constitutes the genuinely new ingredient of the present system. We refer to this substitution throughout the manuscript whenever results from \cite{CW26} or \cite{FW26} are invoked for the equations governing $u$, $u_t$, $u_{tt}$ and $\Theta$.\abs
\textbf{Main results.\quad}With the above substitution, the local existence theory from \cite{CW26} extends to the present setting once complemented by a treatment of the relaxation equation for $g$, yielding the following statement on local existence and uniqueness of strong solutions, along with a suitable extensibility criterion:
\begin{theo}\label{theo1}
Let $\Om\subset\mathbb{R}$ be an open bounded interval, $\tau>0$, and suppose that
\bea{1.10}
D,C\in C^2([0,\infty))\text{ are positive on }[0,\infty)\qquad\text{and}\qquad E,F\in C^1([0,\infty)).
\eea
Then whenever $u_0,u_{0t}\in W_N^{2,2}(\Om)$, $u_{0tt}\in W^{1,2}(\Om)$, $g_0\in W^{1,2}(\Om)$ and
$\Theta_0\in W_N^{2,\infty}(\Om)$ with $\Theta_0\ge 0$ in $\Om$,
there exist $T_{max}\in(0,\infty]$ as well as a unique triple $(u,\Theta,g)$ of functions
\be{1.13}
	\lball
	u\in C^0([0,\tm);W_N^{2,2}(\Om)),\\[1mm]
	\Theta\in C^0([0,\tm);C^1(\ov\Om))\cap C^{2,1}(\ov\Om\times(0,\tm))\cap W_{loc}^{1,\infty}(\ov\Om\times[0,\tm))\qquad\text{and}\\[1mm]
	g\in C^1([0,\tm);C^0(\ov\Om))
	\ear
\ee
which are such that
\bea{1.14}
u_t\in C^0([0,\tm);C^1(\ov\Om))\cap L_{loc}^\infty([0,\tm);W_N^{2,2}(\Om))
\eea
and
\bea{1.15}
u_{tt}\in C^0(\ov\Om\times[0,\tm))\cap L_{loc}^\infty((0,\tm);W^{1,2}(\Om)),
\eea
that $\Theta\ge 0$ in $\Om\times(0,\tm)$, that $(u,\Theta,g)$ forms a strong solution of \eqref{0} in $\Om\times(0,\tm)$ in the sense of Definition \ref{def2.1} below, and that if $\tm<\infty$, then
\bea{1.16}
\limsup_{t\nearrow\tm}\Big\{\|u_t(\cdot,t)\|_{W^{2,2}(\Om)}+\|u_{tt}(\cdot,t)\|_{W^{1,2}(\Om)}+\|\Theta(\cdot,t)\|_{W^{2,\infty}(\Om)}\Big\}=\infty.\hspace{0mm}
\eea
\end{theo}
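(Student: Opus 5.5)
The plan is to reduce the problem to the fixed-point framework of \cite{CW26}. The key observation is that the equation for $g$ is a linear ODE in $t$, pointwise in $x$. Its forcing is $h:=D(\Theta)u_{xt}+C(\Theta)u_x$, so for given $(u,\Theta)$ we have the explicit representation
\bas
g(x,t)=e^{-t/\tau}g_0(x)+\frac1\tau\int_0^t e^{-(t-s)/\tau}h(x,s)\,ds .
\eas
Moreover $g_t=\frac1\tau(h-g)$. Hence the heat source can be written as
\bas
|E(\Theta)u_{xt}+F(\Theta)g_t|^2=\Big|\big(E+\tfrac{F D}{\tau}\big)(\Theta)u_{xt}+\tfrac{F C}{\tau}(\Theta)u_x-\tfrac{F}{\tau}(\Theta)g\Big|^2 .
\eas
So $g_t$ never appears as an independent quantity. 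The source is the square of an affine expression in $u_{xt}$, $u_x$ and the nonlocal-in-time term $g$, with coefficients in $C^1$ of $\Theta$.

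With this in hand, I would repeat the Banach fixed-point argument of \cite{CW26}, applied after the substitution \eqref{e11}, on a closed set $S_T$. This set consists of triples $(\bar u,\bar\Theta,\bar g)$ satisfying the regularity in \eqref{1.13}--\eqref{1.15} on $[0,T]$, with norms bounded by $2\times$ the initial-data size and $\bar\Theta\ge0$. The map $\Phi$ is built in three steps. First, solve the linear MGT-type problem for $u$ with frozen coefficients $D(\bar\Theta),C(\bar\Theta)$, exactly as in \cite{CW26}; its $W^{2,2}$ estimates for $u_t$ and $W^{1,2}$ estimates for $u_{tt}$ come from the energy functional obtained by testing against $-u_{txx}$ and $-u_{ttxx}$-type multipliers. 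Second, define $g$ through the Duhamel formula above using $(u,\bar\Theta)$. Third, solve the linear heat equation with the source written in the rewritten form, using the new $u$ and $g$ but the frozen $\bar\Theta$ in the coefficients, and obtain $W^{2,\infty}$ bounds via parabolic $L^p$ and Schauder theory as in \cite{CW26}.

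The new ingredients concern $g$ alone. Since $h\in C^0([0,T];C^0(\ov\Om))$ (by $u_t\in C^0C^1$ and $W^{2,2}\hookrightarrow C^1$ for $u$), the formula gives $g\in C^1([0,T];C^0(\ov\Om))$. In fact $g\in L^\infty(0,T;W^{1,2})$, which follows from $h_x\in L^\infty L^2$ using $u_{xxt}\in L^\infty L^2$ and $\Theta\in W^{1,\infty}$. Both come with bounds of the form $\|g_0\|+T\cdot(\text{data})$. The boundary condition $g=0$ requires $g_0=0$ on $\partial\Om$ and $h=0$ there. The latter holds because $u_x=u_{xt}=0$ on $\partial\Om$, and I would check that Definition \ref{def2.1} is phrased compatibly with this. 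Replacing $E^2u_{xt}^2$ by the augmented square adds to the source only terms that are uniformly bounded in $L^\infty(\Om\times(0,T))$ together with their $x$-derivatives in $L^\infty L^2$. Those are exactly the norms that \cite{CW26} needs for the heat source, so the bounds for $\Theta$ carry over with modified constants. Nonnegativity of $\Theta$ follows from the comparison principle since the source is nonnegative.

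The main obstacle I expect is the contraction estimate. The difference of two sources is a product
\bas
(a_1-a_2)(a_1+a_2),\qquad a_i=E_iu_{ixt}+F_ig_{it},
\eas
and the difference $g_1-g_2$ must be controlled in the weak norm used in \cite{CW26} (presumably $L^2$-type for $u_{tt}$, $u_{xt}$ and for $\Theta$). Via the Duhamel formula, $\|g_1-g_2\|_{L^\infty L^2}\le C T\sup\|h_1-h_2\|_{L^2}$, and $\|h_1-h_2\|_{L^2}$ is bounded by $\|u_{1xt}-u_{2xt}\|_{L^2}+\|u_{1x}-u_{2x}\|_{L^2}+\|\Theta_1-\Theta_2\|_{L^\infty}$ times the a priori bounds. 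This yields the factor $T$ needed for contraction. Uniqueness then follows from the same difference estimate combined with Gronwall's inequality. The extensibility criterion \eqref{1.16} follows as in \cite{CW26}, because $g$ is slaved to $(u,\Theta)$: as long as the quantities in \eqref{1.16} stay bounded, $\|g\|_{W^{1,2}}$ stays bounded on finite intervals, so the local existence time is bounded below uniformly and the solution can be extended.
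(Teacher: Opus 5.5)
Your route is viable, but it differs from the paper's. It is also not what \cite{CW26} does: there, as here, the Banach fixed point (cf.\ Lemma \ref{lem2.3}) is applied to a parabolic regularization, not to the hyperbolic problem.

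The paper adds artificial viscosity to every equation. For $g$ this is a Dirichlet diffusion $\eps\tau g_{\eps xx}$, with $h_\eps$ standing in for $g_{\eps t}$ in the heat source. It then proves $\eps$-independent bounds on a short interval via the quasi-energy $y_\eps$ and a bootstrap keeping $\|\Theta_\eps\|_{L^\infty(\Om)}+\|\Theta_{\eps t}\|_{L^\infty(\Om)}+\|g_\eps\|_{L^\infty(\Om)}\le M$. It passes to the limit by Aubin--Lions, and only afterwards recovers the $g$-equation by variation of constants. Uniqueness is the separate Gronwall argument of Lemma \ref{lem2.2}.

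You instead eliminate $g$ by Duhamel from the start and contract directly on the unregularized system. This avoids the artificial diffusion on $g$ and the bookkeeping it causes ($\eps\int_0^t\io g_{\eps xx}^2$ and $h_\eps\neq g_{\eps t}$ in Lemmas \ref{lem2.8}, \ref{lem2.9}, \ref{lem2.12}). It also gives uniqueness in $S_T$ and continuous dependence at once. The price is that the steps you label ``exactly as in \cite{CW26}'' must be built from scratch. Above all, you need a well-posedness theory for the linear MGT equation with frozen coefficients $D(\bar\Theta)$, $C(\bar\Theta)$ at the level $u_t\in L^\infty((0,T);W^{2,2}(\Om))$, $u_{tt}\in L^\infty((0,T);W^{1,2}(\Om))$. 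One way is Galerkin in Neumann eigenfunctions with the multiplier $-u_{xxtt}$, which reproduces the functional $y$ of the introduction and uses $\bar\Theta_t,\bar\Theta_{xx}\in L^\infty$. The paper, by contrast, justifies every energy identity on smooth approximate solutions and imports the estimates of \cite{CW26} essentially verbatim.

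Three points in your plan need adjusting.

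First, an $L^\infty(\Om\times(0,T))$ bound for $\Theta_{xx}$ does not follow from parabolic $L^p$ theory (only $p<\infty$). Nor does it follow from Schauder theory, which does not reach $t=0$ for $W_N^{2,\infty}$ data. Use instead, as in Lemma \ref{lem2.7}, that $\Theta_t$ solves a Neumann heat equation with source $H_t\in L^\infty((0,T);L^2(\Om))$ and initial value $\Theta_{0xx}+H(\cdot,0)\in L^\infty(\Om)$. The $t^{-\frac14}$ smoothing from $L^2$ to $L^\infty$ then bounds $\Theta_t$, and you read off $\Theta_{xx}=\Theta_t-H$. In one dimension the norms you list would also suffice, via the Dirichlet problem solved by $\Theta_x$ with source $H_x$ and the estimate $\|\partial_x e^{t\Delta_D}\varphi\|_{L^\infty(\Om)}\le ct^{-\frac34}\|\varphi\|_{L^2(\Om)}$.

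Second, the weak metric must control $\Theta_x$ in $L^2$, not merely $\Theta$. The difference equation for $u$ contains $\big((D(\bar\Theta_1)-D(\bar\Theta_2))u_{2xt}\big)_x$, which produces $\bar\Theta_{1x}-\bar\Theta_{2x}$ when tested with the second time derivative of the difference. This is why \eqref{2.12} carries $\int_0^{t_0}\io\delta_x^2$. Also define $S_T$ through weak-$\star$ lower semicontinuous norm bounds rather than $C^{2,1}$ membership, so that it is complete in that metric.

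Third, the smallness factor comes from the time integrations in the $u$- and $\Theta$-steps, not from $g$. What enters the heat source is, in your notation, $g_{1t}-g_{2t}=\frac1\tau(h_1-h_2)-\frac1\tau(g_1-g_2)$, which carries no factor $T$.

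Your concern about $g_0$ on $\partial\Om$ is justified. Definition \ref{def2.1} requires $g_0=0$ there; Theorem \ref{theo1} does not list this, but \eqref{2.33} and \eqref{2.35} implicitly impose it.
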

Following the approach of \cite{FW26}, enhanced regularity of the initial data and of $D$ and $C$ then entails the existence of classical solutions, via an $\eps$-independent bound on a higher-order quasi-energy functional, the only new ingredient being a corresponding control of $\Theta_{xxx}$ in the presence of the coupling to $g$. This leads to the following counterpart of Theorem \ref{theo1}:
\begin{theo}\label{theo2}
In the setting of Theorem \ref{theo1}, suppose additionally that $D,C\in C^3([0,\infty))$ and that \bea{1.20}
u_0,u_{0t}\in W^{3,2}(\Om)\qquad\text{and}\qquad u_{0tt}\in W_N^{2,2}(\Om).
\eea Then there exist $\tm\in(0,\infty]$ as well as a unique triple $(u,\Theta,g)$ of functions
\be{1.21}
	\lball
	u\in C^0([0,\tm);C^2(\ov\Om))\cap L^\infty((0,\tm);W^{3,2}(\Om)),\\[1mm]
	\Theta\in C^0([0,\tm);C^1(\ov\Om))\cap C^{2,1}(\ov\Om\times(0,\tm))\cap W_{loc}^{1,\infty}(\ov\Om\times[0,\tm))\qquad\text{and}\\[1mm]
	g\in C^1([0,\tm);C^0(\ov\Om))
	\ear
\ee
such that
\bea{1.22}
u_t\in C^0([0,\tm);C^2(\ov\Om))\cap L_{loc}^\infty((0,\tm);W^{3,2}(\Om)),\\
u_{tt}\in C^0([0,\tm);C^1(\ov\Om))\cap L_{loc}^\infty((0,\tm);W^{2,2}(\Om)),\label{1.23}
\eea
as well as
\bea{1.24}
u_{ttt}\in C^0(\ov\Om\times[0,\tm))\cap L_{loc}^\infty((0,\tm);W^{1,2}(\Om))
\eea
and that $\Theta\ge 0$ in $\Om\times(0,\tm)$, that $(u,\Theta,g)$ solves \eqref{0} in the classical sense in $\Om\times(0,\tm)$, and that, if $\tm<\infty$, then
\bea{1.25}
\limsup_{t\nearrow\tm}\Big\{\|u_t(\cdot,t)\|_{W^{3,2}(\Om)}+\|u_{tt}(\cdot,t)\|_{W^{2,2}(\Om)}+\|\Theta(\cdot,t)\|_{W^{2,\infty}(\Om)}\Big\}=\infty.
\eea
\end{theo}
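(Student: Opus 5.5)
The plan is to take the strong solution $(u,\Theta,g)$ from Theorem \ref{theo1} and show that, under (\ref{1.20}) and $D,C\in C^3([0,\infty))$, it has the additional regularity (\ref{1.21})--(\ref{1.24}). As in \cite{FW26}, we will not work with the strong solution directly but with the regularized approximations $(u_\eps,\Theta_\eps,g_\eps)$ used in the proof of Theorem \ref{theo1}, for which all manipulations below are legitimate. Uniqueness then comes from Theorem \ref{theo1}, since any solution in the class (\ref{1.21})--(\ref{1.24}) is in particular a strong solution in the sense of Definition \ref{def2.1}.

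\textbf{Treating $g$ as a passive variable.} Since $g$ never enters the $u$-equation, the coupling to $g$ is only felt through $\Theta$. The relaxation law $\tau g_t+g=D(\Theta)u_{xt}+C(\Theta)u_x$ is a linear ODE in $t$ for each fixed $x$, with solution
\[
g(\cdot,t)=e^{-t/\tau}g_0+\frac1\tau\int_0^t e^{-(t-s)/\tau}\big(D(\Theta)u_{xt}+C(\Theta)u_x\big)(\cdot,s)\,ds .
\]
Hence $g_t=\frac1\tau\big(D(\Theta)u_{xt}+C(\Theta)u_x-g\big)$, so the heat source $|E(\Theta)u_{xt}+F(\Theta)g_t|^2$ equals $|\tilde E u_{xt}+\tilde C u_x+\tilde F g|^2$ with $C^1$ coefficient functions of $\Theta$. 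Differentiating the representation formula in $x$ shows that $\|g_x(\cdot,t)\|_{L^2}$ is controlled by $\|g_0\|_{W^{1,2}}$ together with time integrals of $\|u_{xxt}\|_{L^2}$, $\|u_{xx}\|_{L^2}$ and $\|\Theta_x\|_{L^\infty}$.

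\textbf{Higher-order energy estimate.} We then follow \cite{FW26}. Differentiate the $u$-equation in time and test with $u_{xxxtt}$-type multipliers, after the substitution (\ref{e11}), to bound the quasi-energy
\[
y_\eps(t):=\|u_{xxxt}\|_{L^2}^2+\|u_{xxtt}\|_{L^2}^2+\|u_{xxx}\|_{L^2}^2 .
\]
The mechanical part of this estimate is literally the computation of \cite{FW26}. It produces terms involving $D'''(\Theta)$ and $C'''(\Theta)$, which is why $C^3$ regularity of $D$ and $C$ is required, together with $\Theta_{xxx}$ through $(D(\Theta)u_{xt})_{xxx}$. Those terms are bounded once $\|\Theta_{xxx}\|_{L^2}$ is controlled.

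\textbf{Main obstacle: controlling $\Theta_{xxx}$.} The key new step is an $L^2$ bound for $\Theta_{xxx}$, and we would handle it by maximal regularity applied to the differentiated heat equation: test $\Theta_{xt}$-type identities or use $L^2$ parabolic regularity for $\Theta_x$. This requires bounding the $x$-derivative of the source in $L^2$. That derivative contains products such as $u_{xxt}u_{xt}$, $u_{xx}u_x$, $g_x\cdot(\ldots)$ and $\Theta_x\cdot(\ldots)$, all of which lie in $L^2$ using $W^{1,2}\hookrightarrow L^\infty$ and the $g_x$ bound above. We would close the argument with a Gronwall inequality for $y_\eps+\|\Theta_{xx}\|_{L^2}^2$, giving an $\eps$-independent bound on $[0,T]$ for every $T<\tm$. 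The difficulty is that $g$ is only $W^{1,2}$, with no gain in smoothness, so the hard part is ensuring that only one derivative of $g$ ever appears. This dictates differentiating the source only once, which is consistent with targeting $\Theta_{xxx}\in L^2$ rather than anything higher.

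\textbf{Passing to the limit.} Aubin--Lions compactness passes $\eps\to0$ and yields (\ref{1.22})--(\ref{1.24}). Classical solvability follows since $u_{ttt}$ is continuous and the $u$-equation then holds pointwise. For the extensibility criterion (\ref{1.25}), note that the same estimate shows that boundedness of the quantity in (\ref{1.25}) allows continuation, by the local theory restarted at times near $\tm$.
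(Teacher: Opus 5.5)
Your outline follows the paper's route. The $\Theta_{xxx}$ step matches Lemma \ref{lem3.1}: an $\varepsilon$-uniform bound on $\int_0^{T_\star}\int_\Omega\Theta_{\varepsilon xxx}^2$ comes from testing the heat equation in \eqref{approx}. There only $g_{\varepsilon x}$ enters, through $h_{\varepsilon x}$, and it is already controlled by Lemma \ref{lem2.8}, so no separate coupling to the mechanical energy is needed. The remaining steps also match: the higher-order quasi-energy estimate of \cite{FW26} for $w_{\varepsilon xx}$, $v_{\varepsilon xxx}$, $u_{\varepsilon xxx}$ (Lemma \ref{lem3.2}), passage to the limit, uniqueness via Lemma \ref{lem2.2}, and a restart argument for \eqref{1.25}.

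One step fails as you state it. A Gronwall argument for the approximate solutions cannot give $\varepsilon$-independent bounds on $[0,T]$ for every $T<\tm$. The lower-order control is what makes the higher-order inequalities closable: the bounds of Lemmas \ref{lem2.4}--\ref{lem2.9} and Corollary \ref{cor2.10}. It is established only on the short interval $(0,T_\star)$ produced by the bootstrap in Lemma \ref{lem2.7}, and beyond $T_\star$ the approximations need not even exist. So claim the higher-order bounds, and hence the regularity \eqref{1.21}--\eqref{1.24}, only on $(0,T_\star)$. Then obtain the statement on the full lifespan as the paper does. Define $\tm$ as the supremum of existence times of classical solutions. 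If the quantity in \eqref{1.25} stayed bounded, restart Lemmas \ref{lem2.13} and \ref{lem3.3} from data taken at times $t_j\nearrow\tm$ and glue. Your last sentence already sketches this, so the fix is to make that continuation argument, rather than a single Gronwall estimate, carry the global-in-lifespan claim.
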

\textbf{Remark.\quad}We note that neither extensibility criterion, \eqref{1.16} in Theorem \ref{theo1} nor \eqref{1.25} in Theorem \ref{theo2}, involves $g$, even though the system now comprises the additional equation for $g$. This is a consequence of the structure of \eqref{0} itself: since $g$ solves the linear relaxation equation $\tau g_t+g=D(\Theta)u_{xt}+C(\Theta)u_x$, its size is controlled by that of $u_x$, $u_{xt}$ and $\Theta$ alone, via the variation-of-constants representation
$$g(x,t)=g_0(x)e^{-\frac{t}{\tau}}+\frac{1}{\tau}\int_0^t e^{-\frac{t-s}{\tau}}\big(D(\Theta(x,s))u_{xt}(x,s)+C(\Theta(x,s))u_x(x,s)\big)ds.$$ 
\textbf{Novel aspects.\quad}The genuinely new feature of \eqref{0}, compared to the systems studied in \cite{CW26} and \cite{FW26}, is the additional equation for $g$, which possesses no diffusive dynamics of its own. Consequently, as already noted in the Remark above, $g$ is algebraically bound to the history of $u_x$, $u_{xt}$ and $\Theta$, rather than governed by its own evolution mechanism, so that it can be reconstructed pointwise from $(u,\Theta)$ alone, making $g$ temporally nonlocal, yet requiring no diffusive regularization of its own.\abs
\textbf{Basic strategy.\quad}As in \cite{CW26} and \cite{FW26}, we approximate \eqref{0} by a family of parabolic regularizations, and derive a hierarchy of a priori bounds of increasing derivative order before passing to the limit. For $g$, since its relaxation equation possesses no diffusive term of its own, we regularize it with an artificial Dirichlet diffusion, and use an auxiliary function, standing in for $g_t$, to express the resulting source term in the equation for $\Theta$. Following the approach of \cite{CW26} and \cite{FW26}, the highest-order bounds are organized through a quasi-energy of the form
$$y(t)=\frac{1}{2}\io u_{xtt}^2+\frac{1}{2\tau}\io D(\Theta)u_{xxt}^2+\frac{1}{\tau}\io C(\Theta)u_{xx}u_{xxt}+\frac{B}{2}\io u_{xx}^2,$$
with suitably large $B>0$, which is shown to satisfy a differential inequality on a short time interval, uniformly with respect to the approximation; a comparison argument then yields the desired bounds in the limit.\abs Most of the individual estimates along the way are routine adaptations of the corresponding steps in \cite{CW26} and \cite{FW26} under the substitution described above, and we do not comment on these separately. Wherever $g$ enters the argument, however, genuinely additional work is required that has no counterpart there, simply because neither paper contains an equation for $g$ at all. This concern, in particular: the constant $M$ fixed in Lemma \ref{lem2.4}, which must now also control $g$ itself; the a priori bound on $g_x$ obtained in Lemma \ref{lem2.8}, on which the subsequent control of $g_{xx}$ and $u_{xxx}$ rest; the bounds on the auxiliary quantities entering the source terms are markedly larger and more numerous than in the corresponding heat equation, precisely because this source now depends on $g$ in addition to $u_{xt}$; and finally, the bound on $g_t$ established in Lemma \ref{lem2.12}. While each of these estimates follows the same testing philosophy as before, none of them can simply be read off from the literature, and thus they form the material this manuscript actually supplies.
\mysection{Strong solutions}\label{strong}
\subsection{Definition and uniqueness of strong solutions}
We begin by making precise the notation to be used throughout this paper. As already indicated in the introduction, the equation for $u$ coincides with the one studied in \cite{CW26}; where our arguments parallel the computations given there, we recall the relevant intermediate steps but do not reproduce their derivation.
\begin{defi}\label{def2.1}
Let $\tau>0$, suppose that $D,\ C,\ E,\ F\in C^0([0,\infty))$ and let $u_0\in W_N^{2,2}(\Om)$, $u_{0t}\in W_N^{2,2}(\Om)$, $u_{0tt}\in W^{1,2}(\Om)$, $\Theta_0\in C^0(\ov\Om)$ with $\Theta_0\ge0$ and $g_0\in C^0(\ov\Om)$ with $g_0=0$ on $\partial\Om$. Then given $T\in(0,\infty]$, we will call a triple $(u,\Theta,g)$ of functions
\be{2.1}
	\lball
	u\in C^0([0,T);C^1(\ov\Om))\cap L_{loc}^\infty([0,T);W_N^{2,2}(\Om)),\\[1mm]
	\Theta\in C^0(\ov\Om\times[0,T))\cap C^{2,1}(\ov\Om\times(0,T))\cap W_{loc}^{1,\infty}(\ov\Om\times[0,T))\qquad\text{and}\\[1mm]
	g\in C^1([0,T);C^0(\ov\Om))
	\ear
\ee
a strong solution of \eqref{0} in $\Om\times(0,T)$ if
\be{2.2}
	\lball
	u_t\in C^0([0,T);C^1(\ov\Om))\cap L_{loc}^\infty([0,T);W_N^{2,2}(\Om))\qquad\text{and}\\[1mm]
	u_{tt}\in C^0(\ov\Om\times[0,T))\cap L_{loc}^\infty([0,T);W^{1,2}(\Om)),
	\ear
\ee
if $\Theta\ge 0$ in $\Om\times(0,T)$, if
\bea{2.3}
u(\cdot,0)=u_0,\quad u_t(\cdot,0)=u_{0t},\quad \Theta(\cdot,0)=\Theta_0\quad\text{and}\quad g(\cdot,0)=g_0\quad\text{in }\Om,
\eea
if
\bea{2.4}
-\tau\int_0^T\io u_{tt}\varphi_t-\tau\io u_{0tt}\varphi(\cdot,0)+\int_0^T\io u_{tt}\varphi=-\int_0^T\io D(\Theta)u_{xt}\varphi_x-\int_0^T\io C(\Theta)u_x\varphi_x
\eea
for each $\varphi\in C_0^\infty(\ov\Om\times[0,T))$, and if in the classical pointwise sense we have
\bea{nn}
\Theta_t=\Theta_{xx}+|E(\Theta)u_{xt}+F(\Theta)g_t|^2\quad\text{as well as}\quad \tau g_t+g=D(\Theta)u_{xt}+C(\Theta)u_x\nn
\eea
in $\Om\times(0,T)$, and $\Theta_x=0$ on $\partial\Om\times(0,T)$.\vspace{2mm}
\end{defi}
\textbf{Remark.}\quad i)\quad Fix a family $(\zeta_\eta)_{\eta\in(0,T)}\subset C_0^\infty([0,T))$ such that $\zeta_\eta\equiv 1$ on $[0,\frac{\eta}{2}]$, $\zeta_\eta'\le 0$ and $\supp\zeta_\eta\subset[0,\eta]$ for each $\eta\in(0,T)$. Inserting $\varphi(x,t):=\zeta_\eta(t)$ for $x\in\ov\Om$, $t\in[0,T)$ and $\eta\in(0,T)$, into \eqref{2.4} shows that since $\sup_{t\in(0,\eta)}\|u_{tt}(\cdot,t)-u_{tt}(\cdot,0)\|_{L^\infty(\Om)}\to 0$ as $\eta\searrow 0$ by \eqref{2.2}, in addition to \eqref{2.3} we also have $u_{tt}(\cdot,0)=u_{0tt}$.\abs
ii)\quad If $D,\ C\in C^1([0,\infty))$, then \eqref{2.4} forces every strong solution to satisfy, in the distributional sense, $$\tau u_{ttt}+u_{tt}=D(\Theta)u_{xxt}+D'(\Theta)\Theta_x u_{xt}+C(\Theta)u_{xx}+C'(\Theta)\Theta_x u_x\qquad\text{in }\Om\times(0,T);$$
since $\Theta_x$ is bounded by \eqref{2.1}, this in particular yields $u_{ttt}\in L_{loc}^\infty([0,T);L^2(\Om))$.\abs
iii)\quad Every strong solution automatically satisfies $g(\cdot,t)=0$ on $\partial\Om$ for all $t\in[0,T)$: for $x_0\in\partial\Om$, the boundary condition on $u$ gives $u_x(x_0,\cdot)\equiv 0$ on $(0,T)$ and hence also $u_{xt}(x_0,\cdot)\equiv 0$ on $(0,T)$, so that $h:=g(x_0,\cdot)$ solves the initial value problem $\tau h_t+h=0$ on $(0,T)$, $h(0)=g_0(x_0)=0$, which has only the trivial solution, so that $h\equiv 0$ on $[0,T)$.
\begin{lem}\label{lem2.2}
Let $\tau>0$ and suppose that $D,C,E,F$ and the initial data satisfy the hypotheses of Theorem \ref{theo1}. Then for each $T>0$, there exists at most one strong solution of \eqref{0} in $\Om\times(0,T)$ in the sense of Definition \ref{def2.1}.
\end{lem}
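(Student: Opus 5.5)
Given two strong solutions $(u,\Theta,g)$ and $(\tu,\tT,\tg)$ of \eqref{0} in $\Om\times(0,T)$ with the same initial data, I would fix $T'\in(0,T)$ and show that the differences $v:=u-\tu$, $\psi:=\Theta-\tT$ and $h:=g-\tg$ vanish on $[0,T']$ by a Gronwall argument. By \eqref{2.1} and \eqref{2.2}, on $\ov\Om\times[0,T']$ there is a constant $K$ such that $\Theta,\tT,\Theta_x,\tT_x,u_x,u_{xt},\tu_x,\tu_{xt},u_{tt},\tu_{tt},g,\tg,g_t,\tg_t$ are bounded in $L^\infty$ by $K$, and $u_{xxt},u_{xx},\tu_{xxt},\tu_{xx}$ are bounded in $L^\infty((0,T');L^2(\Om))$ by $K$. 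Hence $D,C,E,F$ and $D',C'$ are Lipschitz and bounded on $[0,K]$, and $D,C\ge c_0>0$ there.

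First I would treat the $u$-part exactly as in the uniqueness proof of \cite{CW26}, using the substitution \eqref{e11}. By Remark ii), $\tau v_{ttt}+v_{tt}=(D(\Theta)v_{xt})_x+(C(\Theta)v_x)_x+((D(\Theta)-D(\tT))\tu_{xt})_x+((C(\Theta)-C(\tT))\tu_x)_x$. I would test this with $v_{tt}$, or, if needed to avoid third-order terms, with $v_t$ together with a multiple of $v_{tt}$ after integrating in time. The goal is an energy $\mathcal E(t)=\io v_{tt}^2+\io v_{xt}^2+\io v_x^2+\io\psi^2$ satisfying $\mathcal E'\le c\,\mathcal E+c\io\psi_x^2$ plus a dissipative gain from the heat equation. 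The mixed terms $(D(\Theta)-D(\tT))\tu_{xt}$ are controlled by $\|\psi\|_{L^\infty}\|\tu_{xxt}\|_{L^2}+\|\psi_x\|_{L^2}$, and $\|\psi\|_{L^\infty}^2\le c\|\psi\|_{W^{1,2}}^2$ in one dimension.

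The new ingredients are the equations for $\psi$ and $h$. Testing $\tau h_t+h=D(\Theta)u_{xt}-D(\tT)\tu_{xt}+C(\Theta)u_x-C(\tT)\tu_x$ with $h$ and with $h_t$ gives $\io h^2$ and $\io h_t^2\le c\io(h^2+v_{xt}^2+v_x^2+\psi^2)$. This uses only $L^\infty$ bounds on $u_x,u_{xt}$ and requires no derivatives of $h$. For the temperature I would write $\psi_t=\psi_{xx}+(a+\tilde a)(a-\tilde a)$ with $a=E(\Theta)u_{xt}+F(\Theta)g_t$ and $\tilde a$ its analogue, so that $a+\tilde a$ is bounded. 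Testing with $\psi$ and with $-\psi_{xx}$ gives $\frac{d}{dt}\io(\psi^2+\psi_x^2)+\io\psi_{xx}^2\le c\io(\psi^2+\psi_x^2+v_{xt}^2+h_t^2)$, where $h_t$ is already controlled. This $\psi_{xx}$ dissipation absorbs the $\psi_x$ terms coming from the $u$-equation.

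The main obstacle is the $u$-energy at the level of $v_{tt}$. There the term $((D(\Theta)-D(\tT))\tu_{xt})_x$ tested against $v_{tt}$ produces $\psi_x\tu_{xt}$ and $\psi\tu_{xxt}$. With only $\tu_{xxt}\in L^\infty L^2$ this still works, since $\|\psi\|_{L^\infty}\le c\|\psi\|_{W^{1,2}}$. The term $D(\Theta)v_{xt}$ tested with $v_{xtt}$ needs the identity $\frac12\frac{d}{dt}\io D(\Theta)v_{xt}^2-\frac12\io D'(\Theta)\Theta_tv_{xt}^2$, and $\Theta_t$ is bounded by the $W^{1,\infty}_{loc}$ regularity in \eqref{2.1}. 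The resulting total $\mathcal F:=\mathcal E+\io(h^2+\psi_x^2)$ satisfies $\mathcal F'\le c\mathcal F$ with $\mathcal F(0)=0$, so Gronwall gives $\mathcal F\equiv0$ on $(0,T')$. Since $T'<T$ was arbitrary, this proves uniqueness. Where the regularity of strong solutions does not justify these computations directly, I would argue in time-integrated form using \eqref{2.4} with mollified test functions.
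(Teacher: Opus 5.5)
Your proposal is correct and follows essentially the paper's route. The mechanical energy for $v_{tt},v_{xt},v_x$ is taken from \cite{CW26}, the relaxation equation for $h=g-\tg$ is tested in $L^2$, $h_t$ is eliminated from the temperature source via that same equation, and Gronwall closes the argument. Your one deviation, testing the $\psi$-equation also with $-\psi_{xx}$, is unnecessary: the paper tests only with $\delta$ and puts the weight $B_2=c_{14}$ on $\io\delta^2$, so that the dissipation $\int_0^{t_0}\io\delta_x^2$ absorbs the $\delta_x$-terms coming from the $u$-part. If you keep that extra test, you must also check that $\io\psi_x^2(\cdot,t)\to0$ as $t\searrow0$. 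This is not part of Definition \ref{def2.1}, but it follows from the boundedness of $\Theta_{xx}=\Theta_t-|E(\Theta)u_{xt}+F(\Theta)g_t|^2$ near $t=0$.
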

\proof
Let $(u,\Theta,g)$ and $(\tu,\tT,\tg)$ be two strong solutions of \eqref{0} in $\Om\times(0,T)$, fix $T_0\in(0,T)$ and set
\bea{2.5}
d(x,t):=\begin{cases} u(x,t)-\tu(x,t),&x\in\ov\Om,\ t\in[0,T_0), \\ 0,&x\in\ov\Om,\ t<0, \end{cases}
\eea
and
\bea{2.6}
\delta(x,t):=\Theta(x,t)-\tT(x,t),\qquad x\in\ov\Om,\ t\in[0,T_0),
\eea
as well as
\bea{2.7}
\varrho(x,t):=g(x,t)-\tg(x,t),\qquad x\in\ov\Om,\ t\in[0,T_0).
\eea
By Definition \ref{def2.1}, all the functions $d$, $d_{t}$, $d_{tt}$, $\delta$ and $\varrho$ are continuous on their respective domains, with $d_{tt}(\cdot,0)=0$ on $\ov\Om$ and also $\delta(\cdot,0)=0$ and $\varrho(\cdot,0)=0$ on $\Om$. Since $T_0<T$, we may fix $c_i=c_i(T_0)>0$, $i\in\{1,2,\dots,13\}$, such that in line with Definition \ref{def2.1},
\bea{2.8}
\|u_x\|_{L^\infty(\Om)}+\|\tu_x\|_{L^\infty(\Om)}\le c_1\quad\text{and} \quad\|u_{xt}\|_{L^\infty(\Om)}+\|\tu_{xt}\|_{L^\infty(\Om)}\le c_2\qquad\text{for all }t\in(0,T_0)\nn
\eea
as well as
\bea{2.9}
\|\Theta\|_{L^\infty(\Om)}+\|\tT\|_{L^\infty(\Om)}\le c_3\quad\text{and}\quad\|g_t\|_{L^\infty(\Om)}+\|\tg_t\|_{L^\infty(\Om)}\le c_4\qquad\text{for all }t\in(0,T_0),
\eea
and that
\bea{2.10}
c_5\le D(\xi)\le c_6,\quad C(\xi)\le c_7,\quad |E(\xi)|\le c_8\quad\text{and}\quad |F(\xi)|\le c_9\qquad\text{for all }\xi\in[0,c_3]
\eea
as well as
\bea{2.11}
|D'(\xi)|\le c_{10},\quad|C'(\xi)|\le c_{11},\quad|E'(\xi)|\le c_{12}\quad\text{and}\quad|F'(\xi)|\le c_{13}\qquad\text{for all }\xi\in[0,c_3].
\eea
By the mean value theorem, \eqref{2.9} and \eqref{2.11} imply that
\bea{re2.10}
|D(\Theta)-D(\tT)|\le c_{10}|\delta|\quad\text{and}\quad|C(\Theta)-C(\tT)|\le c_{11}|\delta|\qquad\text{in }\Om\times(0,T_0),
\eea
and that
\bea{re2.11}
|E(\Theta)-E(\tT)|\le c_{12}|\delta|\quad\text{and}\quad|F(\Theta)-F(\tT)|\le c_{13}|\delta|\qquad\text{in }\Om\times(0,T_0).
\eea
We subtract from \eqref{2.4}, as satisfied by $(u,\Theta)$, the corresponding identity satisfied by $(\tu,\tT)$, and test the resulting equation for $d$ with suitable time-averaged versions of $d_{tt}$ itself, obtained by inserting Steklov averages of $d_{tt}$ and letting the averaging parameter tend to zero, as is standard in this type of argument. This yields, for every $t_0\in(0,T_0)$, an inequality of the form
\bea{2.12}
&&\hspace{-10mm}
\frac{\tau}{2}\io d_{tt}^2(\cdot,t_0)+\frac{1}{2}\io D(\Theta(\cdot,t_0))d_{xt}^2(\cdot,t_0)+\io C(\Theta(\cdot,t_0))d_x(\cdot,t_0)d_{xt}(\cdot,t_0)\nn\\
&\le& c_{14}\int_0^{t_0}\io d_{tt}^2+c_{14}\int_0^{t_0}\io d_{xt}^2+c_{14}\int_0^{t_0}\io d_x^2+c_{14}\int_0^{t_0}\io \delta_x^2+c_{14}\int_0^{t_0}\io \delta^2;
\eea
we refer to \cite[Proof of Lemma 2.2, inequality (2.30)]{CW26} for the details of this testing procedure, which carries over to the present setting after the substitution described in the Introduction, with $c_{14}=c_{14}(T_0)>0$ arising, via Hölder's and Young's inequalities, from the bounds $c_1,\dots, c_{13}$ fixed above together with corresponding bounds for $\tu_{xx}$, $\tu_{xxt}$, $\Theta_x$, $\tT_x$, $\Theta_t$, and from the embedding $W^{1,2}(\Om)\hookrightarrow L^\infty(\Om)$. Here, in line with the continuity of $d_x$ and $d_{xt}$, Young's inequality gives
\bea{2.13}
\frac{1}{2}\frac{d}{dt}\io d_x^2=\io d_x d_{xt}\le \io d_{xt}^2+\frac{1}{4}\io d_x^2\qquad\text{for all }t\in(0,T_0),
\eea
and, since
\bea{2.14}
\tau\varrho_t&=&-\varrho+D(\Theta)u_{xt}+C(\Theta)u_x-D(\tT)\tu_{xt}-C(\tT)\tu_x\nn\\
&=&-\varrho+\big\{D(\Theta)-D(\tT)\big\}\tu_{xt}+D(\Theta)d_{xt}+\big\{C(\Theta)-C(\tT)\big\}\tu_x+C(\Theta)d_x
\eea
holds in the classical sense in $\Om\times(0,T_0)$, multiplying by $\varrho$ and integrating over $\Om$ yields, upon using \eqref{re2.10} together with Young's inequality,
\bea{2.15}
\frac{\tau}{2}\frac{d}{dt}\io \varrho^2+\io \varrho^2&=&\io \big\{D(\Theta)-D(\tT)\big\}\tu_{xt}\cdot\varrho+\io D(\Theta)d_{xt}\cdot\varrho\nn\\
&&+\io\big\{C(\Theta)-C(\tT)\big\}\tu_x\cdot\varrho+\io C(\Theta)d_x\cdot\varrho\nn\\
&\le&c_2c_{10}\io |\delta|\cdot|\varrho|+c_6\io |d_{xt}|\cdot|\varrho|+c_1c_{11}\io|\delta|\cdot|\varrho|+c_7\io|d_x|\cdot|\varrho|\nn\\
&\le& c_{15}\io \delta^2+c_{15}\io \varrho^2+\io d_{xt}^2+c_{15}\io d_x^2\qquad\text{for all }t\in(0,T_0)
\eea
with $c_{15}:=\frac{c_2c_{10}+c_1c_{11}+c_7}{2}+\frac{c_6^2}{4}$. Moreover, in the classical sense in $\Om\times(0,T_0)$ we have
\bea{2.16}
\delta_t&=&\delta_{xx}+|E(\Theta)u_{xt}+F(\Theta)g_t|^2-|E(\tT)\tu_{xt}+F(\tT)\tg_t|^2\nn\\
&=&\delta_{xx}+\big\{E(\Theta)u_{xt}-E(\tT)\tu_{xt}+F(\Theta)g_t-F(\tT)\tg_t\big\}\nn\\
&&\hspace{10mm}\cdot\, \big\{E(\Theta)u_{xt}+E(\tT)\tu_{xt}+F(\Theta)g_t+F(\tT)\tg_t\big\},
\eea
where
\bea{2.17}
&&\hspace{-10mm}
E(\Theta)u_{xt}-E(\tT)\tu_{xt}+F(\Theta)g_t-F(\tT)\tg_t\nn\\
&=&\big\{E(\Theta)-E(\tT)\big\}\tu_{xt}+E(\Theta)d_{xt}+\big\{F(\Theta)-F(\tT)\big\}\tg_{t}+F(\Theta)\varrho_{t}\qquad\text{in }\Om\times(0,T_0).
\eea
Since
$E(\Theta)u_{xt}+E(\tT)\tu_{xt}+F(\Theta)g_t+F(\tT)\tg_t\le c_{16}\equiv c_{16}(T_0):= c_2c_8+c_4c_9$ in $\Om\times(0,T_0)$,
inserting \eqref{2.17} and \eqref{2.14} into \eqref{2.16} and using \eqref{re2.10}, \eqref{re2.11} together with Young's inequality therefore yields
\bea{nn}
\frac{1}{2}\frac{d}{dt}\io \delta^2+\io \delta_x^2&\le& c_{16}\io\big| \big\{E(\Theta)-E(\tT)\big\}\tu_{xt}\cdot\delta\big|+c_{16}\io \big|E(\Theta)d_{xt}\cdot\delta\big|\nn\\
&&+c_{16}\io \big|\big\{F(\Theta)-F(\tT)\big\}\tg_{t}\cdot\delta\big|+\frac{c_{16}}{\tau}\io \big|F(\Theta)\varrho\cdot\delta\big|\nn\\
&&+\frac{c_{16}}{\tau}\io \big|F(\Theta)\big\{D(\Theta)-D(\tT)\big\}\tu_{xt}\cdot\delta\big|+\frac{c_{16}}{\tau}\io \big|F(\Theta)D(\Theta)d_{xt}\cdot\delta\big|\nn\\
&&+\frac{c_{16}}{\tau}\io \big|F(\Theta)\big\{C(\Theta)-C(\tT)\big\}\tu_x\cdot\delta\big|+\frac{c_{16}}{\tau}\io \big|F(\Theta)C(\Theta)d_x\cdot\delta\big|\nn\\
&\le& c_2c_{12}c_{16}\io \delta^2+c_8c_{16}\io |d_{xt}|\cdot|\delta|+c_4c_{13}c_{16}\io \delta^2+\frac{c_9c_{16}}{\tau}\io |\varrho|\cdot|\delta|\nn\\
&&+\frac{c_2c_9c_{10}c_{16}}{\tau}\io\delta^2+\frac{c_6c_9c_{16}}{\tau}\io |d_{xt}|\cdot|\delta|+\frac{c_1c_9c_{11}c_{16}}{\tau}\io \delta^2+\frac{c_7c_9c_{16}}{\tau}\io |d_x|\cdot|\delta|\nn\\
&\le& c_{17}\io \delta^2+\io d_{xt}^2+c_{17}\io\varrho^2+c_{17}\io d_x^2\qquad\text{for all }t\in(0,T_0),\nn
\eea
with $c_{17}:=c_2c_{12}c_{16}+c_4c_{13}c_{16}+\frac{c_8^2c_{16}^2}{2}+\frac{c_9c_{16}+c_7c_9c_{16}}{2\tau}+\frac{c_6^2c_9^2c_{16}^2}{2\tau^2}+\frac{c_2c_9c_{10}c_{16}+c_1c_9c_{11}c_{16}}{\tau}$.
On integrating this, \eqref{2.13} and \eqref{2.15} in time and combining the result with \eqref{2.12}, we thus infer that 
\bea{nn}
y(t):=\frac{\tau}{2}\io d_{tt}^2+\frac{1}{2}\io D(\Theta)d_{xt}^2+\io C(\Theta)d_xd_{xt}+\frac{B_1}{2}\io d_x^2+\frac{B_2}{2}\io \delta^2+\frac{\tau}{2}\io \varrho^2,\qquad t\in[0,T_0],\nn
\eea
with
\bea{2.18}
B_1:=\frac{4c_7^2}{c_5}\qquad\text{and}\qquad B_2:=c_{14},\nn
\eea
satisfies
\bea{nn}
y(t_0)+B_2\int_0^{t_0}\io\delta_x^2&\le& c_{14}\int_0^{t_0}\io d_{tt}^2+c_{14}\int_0^{t_0}\io d_{xt}^2+c_{14}\int_0^{t_0}\io d_x^2+c_{14}\int_0^{t_0}\io \delta_x^2\nn\\
&&+c_{14}\int_0^{t_0}\io\delta^2+B_1\int_0^{t_0}\io d_{xt}^2+\frac{B_1}{4}\int_0^{t_0}\io d_x^2\nn\\
&&+B_2c_{17}\int_0^{t_0}\io \delta^2+B_2\int_0^{t_0}\io d_{xt}^2+B_2c_{17}\int_0^{t_0}\io\varrho^2+B_2c_{17}\int_0^{t_0}\io d_x^2\nn\\
&&+c_{15}\int_0^{t_0}\io\delta^2+c_{15}\int_0^{t_0}\io\varrho^2+\int_0^{t_0}\io d_{xt}^2+c_{15}\int_0^{t_0}\io d_x^2\nn
\eea
for all $t\in(0,T_0)$ and hence
\bea{2.19}
y(t_0)\le c_{18}\int_0^{t_0}\io d_{tt}^2+c_{18}\int_0^{t_0}\io d_{xt}^2+c_{18}\int_0^{t_0}\io d_x^2+c_{18}\int_0^{t_0}\io\delta^2+c_{18}\int_0^{t_0}\io\varrho^2
\eea
for all $t\in(0,T_0)$ with $c_{18}:=\max\{c_{14}+B_1+B_2+1,\ c_{14}+\frac{B_1}{4}+B_2c_{17}+c_{15}\}$. As, on the other hand, \eqref{2.10} together with Young’s inequality ensures that
\bea{2.20}
y(t)&\ge&\frac{\tau}{2}\io d_{tt}^2+\frac{c_5}{2}\io d_{xt}^2-c_7\io |d_x|\cdot|d_{xt}|+\frac{B_1}{2}\io d_x^2+\frac{B_2}{2}\io \delta^2+\frac{\tau}{2}\io\varrho^2\nn\\
&\ge&\frac{\tau}{2}\io d_{tt}^2+\frac{c_5}{4}\io d_{xt}^2-\frac{c_7^2}{c_5}\io d_x^2+\frac{B_1}{2}\io d_x^2+\frac{B_2}{2}\io \delta^2+\frac{\tau}{2}\io\varrho^2\nn\\
&=&\frac{\tau}{2}\io d_{tt}^2+\frac{c_5}{4}\io d_{xt}^2+\frac{B_1}{4}\io d_x^2+\frac{B_2}{2}\io \delta^2+\frac{\tau}{2}\io\varrho^2\qquad\text{for all }t\in(0,T_0),
\eea
from \eqref{2.19} we particularly obtain that
$$y(t_0)\le c_{19}\int_0^{t_0}y(t)dt\qquad\text{for all }t\in(0,T_0),$$
where $c_{19}:=c_{18}\cdot\max\{\frac{2}{\tau},\frac{4}{c_5},\frac{4}{B_1},\frac{2}{B_2},\frac{2}{\tau}\}$. Since $y$ is continuous on $[0,T_0]$ and, by the lower bound corresponding to \eqref{2.20}, nonnegative there, a Grönwall lemma implies that $y(t_0) = 0$ for all $t_0\in(0,T_0)$, whence again relying on \eqref{2.20} we may conclude that $d_{tt}\equiv 0$, $\delta\equiv 0$ and $\varrho\equiv 0$ in $\Om\times(0,T_0)$. Since $d(x,0)=d_t(x,0)=0$ for all $x\in\Om$ by Definition \eqref{def2.1}, and since $T_0\in(0,T)$ was arbitrary, this means that, indeed, $u-\tu\equiv 0$, $\Theta-\tT\equiv 0$ and $g-\tg\equiv 0$ in $\Om\times(0,T)$.
\qed
\subsection{Approximate problems and a priori estimates}\label{sec2.2}
We recall the strategy of regularizing the original problem by a family of parabolic approximations, as employed in \cite[Section 3]{CW26}. In order to address the existence claim in a conveniently simple setting, essentially equivalent to that covered by Theorem \ref{theo1}, we first restrict our attention to initial data satisfying, beyond the requirement that $u_0, u_{0t}\in W_N^{2,2}(\Om)$, $u_{0tt},g_0\in W^{1,2}(\Om)$ and that $\Theta_0\in W_N^{2,\infty}(\Om)$, the normalization
\bea{---}
\io u_0=\io u_{0t}=\io u_{0tt}=0,
\eea
and only remove this restriction at the very end, in the proof of Theorem \ref{theo1} itself.\abs Specifically, here and below we assume that $\Om\subset\mathbb{R}$ is a fixed open bounded interval, that $\tau>0$, and that $D$, $C$, $E$ and $F$ comply with \eqref{1.10}. By standard smoothing procedures, we fix families
\bea{2.30}
(D_\eps)_{\eps\in(0,1)},(C_\eps)_{\eps\in(0,1)},(E_\eps)_{\eps\in(0,1)},(F_\eps)_{\eps\in(0,1)}\subset C^\infty([0,\infty))\text{ with }D_\eps,C_\eps>0\text{ on }[0,\infty)
\eea
for all $\eps\in(0,1)$, such that
\bea{2.31}
D_\eps\to D\quad\text{and}\quad C_\eps\to C\qquad\text{in }C_{loc}^2([0,\infty))\text{ as }\eps\searrow 0,
\eea
\bea{2.32}
E_\eps\to E\quad\text{and}\quad F_\eps\to F\qquad\text{in }C_{loc}^1([0,\infty))\text{ as }\eps\searrow 0,
\eea
as well as
\be{2.33}
	\lball
	(u_{0\eps})_{\eps\in(0,1)}\subset C^\infty(\ov\Om),\\[1mm]
	(v_{0\eps})_{\eps\in(0,1)}\subset C^\infty(\ov\Om),\\[1mm]
	(w_{0\eps})_{\eps\in(0,1)}\subset C^\infty(\ov\Om),\\[1mm]
	(\Theta_{0\eps})_{\eps\in(0,1)}\subset C^\infty(\ov\Om)\quad\text{with }\Theta_{0\eps}\ge 0\text{ for all }\eps\in(0,1)\qquad\text{and}\\[1mm]
	(g_{0\eps})_{\eps\in(0,1)}\subset C^\infty(\ov\Om)\quad\text{with }g_{0\eps}=0\text{ on }\partial\Om\text{ for all }\eps\in(0,1),
	\ear
\ee
that $u_{0\eps x}$, $v_{0\eps x}$, $w_{0\eps x}$ and $\Theta_{0\eps x}$ have compact support in $\Om$ with
\bea{2.34}
\io u_{0\eps}=\io v_{0\eps}=\io w_{0\eps}=0\qquad\text{for all }\eps\in(0,1),
\eea
and such that as $\eps\searrow 0$ we have
\be{2.35}
	\lball
	u_{0\eps}\to u_0\qquad\text{in }W^{2,2}(\Om),\\[1mm]
	v_{0\eps}\to u_{0t}\qquad\text{in }W^{2,2}(\Om),\\[1mm]
	w_{0\eps}\to u_{0tt}\qquad\text{in }W^{1,2}(\Om),\\[1mm]
	\Theta_{0\eps}\stackrel{\ast}{\rightharpoonup} \Theta_0\qquad\text{in }W^{2,\infty}(\Om)\qquad\text{and}\\[1mm]
	g_{0\eps}\to g_0\qquad\text{in }W^{1,2}(\Om),
	\ear
\ee
as well as
\bea{2.36}
\sqrt{\eps}u_{0\eps xxx}\to 0\qquad\text{in }L^2(\Om).
\eea
For $\eps\in(0,1)$, standard arguments from local existence theory for parabolic cross-diffusion systems, applied to the regularization of \eqref{0} given by
\begin{eqnarray}
\left\{ \begin{array}{lcll}
\tau w_{\eps t}\hspace{-2mm}&=&\hspace{-2mm}\eps\tau w_{\eps xx}+(D_\eps(\Theta_\eps)v_{\eps x})_x+(C_\eps(\Theta_\eps)u_{\eps x})_x-w_\eps, & x\in\Om,\ t>0,\\[1mm]
v_{\eps t}\hspace{-2mm}&=&\hspace{-2mm}\eps v_{\eps xx}+w_\eps,& x\in\Om,\ t>0,\\[1mm]
u_{\eps t}\hspace{-2mm}&=&\hspace{-2mm}\eps u_{\eps xx}+v_\eps,& x\in\Om,\ t>0,\\[1mm]
\Theta_{\eps t}\hspace{-2mm}&=&\hspace{-2mm}\Theta_{\eps xx}+|E_\eps(\Theta_\eps)v_{\eps x}+F_\eps(\Theta_\eps)h_{\eps}|^2,& x\in\Om,\ t>0,\\[1mm]
\tau g_{\eps t}\hspace{-2mm}&=&\hspace{-2mm}\eps\tau g_{\eps xx}+D_\eps(\Theta_\eps)v_{\eps x}+C_\eps(\Theta_\eps)u_{\eps x}-g_\eps,& x\in\Om,\ t>0,\\[1mm]
\multicolumn{3}{l}{w_{\varepsilon x}=v_{\varepsilon x}=u_{\varepsilon x}=\Theta_{\varepsilon x}=g_\varepsilon=0,} & x\in\partial\Omega,\ t>0,\\[1mm]
\multicolumn{3}{l}{(w_\eps,v_\eps,u_\eps,\Theta_\eps,g_\eps)(x,0)=(w_{0\eps},v_{0\eps},u_{0\eps},\Theta_{0\eps},g_{0\eps})(x),} & x\in\Om,
\end{array} \right. \label{approx}
\end{eqnarray}
with 
\bea{2.36.1}
h_\eps:=\frac{1}{\tau}\big\{D_\eps(\Theta_\eps)v_{\eps x}+C_\eps(\Theta_\eps)u_{\eps x}-g_\eps\big\},\qquad x\in\Om,\ t>0,
\eea
standing in for $g_{\eps t}$ in accordance with the limit $\eps\searrow 0$ considered in Section \ref{sec2.3}, yield the following.
\begin{lem}\label{lem2.3}
Let $\eps\in(0,1)$. Then there exist $\tme\in(0,\infty]$ and
\bea{2.37}
w_\eps,v_\eps,u_\eps,\Theta_\eps,g_\eps\in C^{2,1}(\ov\Om\times[0,\tme))\cap C^\infty(\ov\Om\times(0,\tme)),
\eea
%as well as
%\bea{2.38}
%g_\eps\in C^1([0,\tme);C^0(\ov\Om))\cap C^\infty((0,\tme);C^0(\ov\Om))
%\eea
such that $\Theta_\eps\ge 0$ in $\Om\times(0,\tme)$, that $(w_\eps,v_\eps,u_\eps,\Theta_\eps,g_\eps)$ solves \eqref{approx} classically in $\Om\times(0,\tme)$, and that
\bea{2.39}
if\ \tme<\infty,\qquad then\hspace{10cm}\nn\\
\limsup_{t\nearrow\tme}\Big\{\|w_\eps(\cdot,t)\|_{L^\infty(\Om)}+\|v_\eps(\cdot,t)\|_{W^{1,2}(\Om)}+\|u_\eps(\cdot,t)\|_{W^{1,2}(\Om)}\hspace{3cm}\nn\\
+\|\Theta_\eps(\cdot,t)\|_{L^\infty(\Om)}+\|g_\eps(\cdot,t)\|_{L^\infty(\Om)}\Big\}=\infty.\hspace{1.5cm}
\eea
Moreover,
\bea{2.40}
\io w_\eps(\cdot,t)=\io v_\eps(\cdot,t)=\io u_\eps(\cdot,t)=0\qquad\text{for all }t\in(0,\tme).
\eea
\end{lem}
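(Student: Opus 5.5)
The plan is to obtain Lemma \ref{lem2.3} from Amann's theory for normally parabolic, triangular quasilinear systems, in the same way as the corresponding statement in \cite[Lemma 3.1]{CW26}. The extra components $g_\eps$ and $h_\eps$ are handled by treating $g_\eps$ as one more parabolic unknown. Since $h_\eps$ is given explicitly by \eqref{2.36.1}, I substitute it into the $\Theta_\eps$-equation. The system \eqref{approx} then becomes a closed system for $U:=(w_\eps,v_\eps,u_\eps,\Theta_\eps,g_\eps)$ of the form $U_t=(A(U)U_x)_x+f(U,U_x)$. Here $A$ is upper triangular with diagonal entries $(\eps,\eps,\eps,1,\eps)$, all positive. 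The off-diagonal entries are $D_\eps(\Theta_\eps)/\tau$ and $C_\eps(\Theta_\eps)/\tau$ in the $w_\eps$-row, and likewise in the $g_\eps$-row, where the $g_\eps$-equation contains the non-divergence terms $D_\eps(\Theta_\eps)v_{\eps x}+C_\eps(\Theta_\eps)u_{\eps x}$. Those terms are of lower order in $g_\eps$ and first order in $v_\eps,u_\eps$, so they can go into $f$.

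The system is therefore normally elliptic. The boundary conditions (Neumann for $w_\eps,v_\eps,u_\eps,\Theta_\eps$, Dirichlet for $g_\eps$) are separated and of mixed type, and the Lopatinskii--Shapiro condition holds componentwise. The smooth initial data from \eqref{2.33} satisfy the compatibility conditions. For $w_{0\eps},v_{0\eps},u_{0\eps},\Theta_{0\eps}$ this is because their derivatives have compact support. For $g_{0\eps}$ it is because $g_{0\eps}=0$ on $\partial\Om$.

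Amann's theorem then provides a maximal solution in $C^0([0,\tme);W^{1,p}(\Om))\cap C^\infty(\ov\Om\times(0,\tme))$ for some $p>1$. Regularity up to $t=0$ in $C^{2,1}$ follows from parabolic Schauder theory applied to each scalar equation successively: first $\Theta_\eps$ and $g_\eps$ as linear equations with Hölder coefficients, then $u_\eps$, $v_\eps$, $w_\eps$. Amann's extensibility criterion says that if $\tme<\infty$, the $W^{1,p}$ norm must blow up.

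To reduce this to \eqref{2.39}, I argue by contradiction. Assume the bracket in \eqref{2.39} stays bounded. Then I bootstrap with the triangular structure:
\begin{itemize}
\item Testing the $v_\eps$-equation and the $u_\eps$-equation gives control of $u_\eps,v_\eps$ in $W^{1,2}$, and hence bounds on $\|v_\eps\|_{L^\infty}$ and $\|u_\eps\|_{L^\infty}$ through the embedding in one dimension.
\item Since $w_\eps$ is bounded, Neumann heat semigroup smoothing estimates upgrade $v_\eps$ to $W^{1,q}$ bounds for all $q$, and the same then works for $u_\eps$.
\item With $u_{\eps x},v_{\eps x},\Theta_\eps,g_\eps$ bounded in $L^q$, the $\Theta_\eps$-source is bounded in $L^{q/2}$. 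This gives $\Theta_{\eps x}\in L^\infty$, and similarly $g_{\eps x}\in L^\infty$.
\item Finally the $w_\eps$-equation yields $w_{\eps x}$ bounds.
\end{itemize}
Together these contradict the $W^{1,p}$ blow-up.

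Nonnegativity of $\Theta_\eps$ follows from the comparison principle: the source in the $\Theta_\eps$-equation is nonnegative and $\Theta_{0\eps}\ge0$. The mass identities \eqref{2.40} come from integrating the first three equations over $\Om$ under the Neumann conditions. With $m_w:=\io w_\eps$, $m_v:=\io v_\eps$ and $m_u:=\io u_\eps$ this gives $\tau m_w'=-m_w$, $m_v'=m_w$ and $m_u'=m_v$, where all three masses vanish initially by \eqref{2.34}. Hence all three vanish for all times.

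I expect the main obstacle to be the bootstrap that reduces Amann's criterion to the weaker quantities in \eqref{2.39}. The quadratic source involving $h_\eps$ requires $L^q$ control of $u_{\eps x}$ and $v_{\eps x}$ with large $q$. I would try to obtain this from semigroup estimates, exploiting that $w_\eps$ is bounded in $L^\infty$.
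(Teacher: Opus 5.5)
Your route is viable, but it differs from the paper's. The paper does not invoke Amann's theory. It first extends $D_\eps,C_\eps,E_\eps,F_\eps$ smoothly to $\mathbb{R}$. It then writes all five components in Duhamel form: the Neumann heat semigroup for $w_\eps,v_\eps,u_\eps,\Theta_\eps$, and the Dirichlet heat semigroup for $g_\eps$, with $h_\eps$ substituted into the heat source. Finally it runs a Banach fixed point argument in $C^0([0,T];X_0)$, where $X_0=C^0(\ov\Om)\times W^{1,2}(\Om)\times W^{1,2}(\Om)\times C^0(\ov\Om)\times C^0(\ov\Om)$. The existence time depends on the data only through their $X_0$-norm, and that norm is exactly the quantity in \eqref{2.39}. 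So the extensibility criterion follows from a standard continuation argument with no further work. Regularity, $\Theta_\eps\ge 0$ and \eqref{2.40} are then handled as you propose.

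Your approach gets existence, uniqueness and smoothness from Amann's theorem, but must then descend from a $W^{1,p}$-criterion to \eqref{2.39}. For the paper's later use in Corollary \ref{cor2.10}, which bounds every component in $W^{1,2}(\Om)$, the unreduced $W^{1,2}$-criterion would already suffice. The statement as written, however, needs the reduction. Your bootstrap does close, using that $\eps>0$ is fixed, but the last bullet hides its only nontrivial step. The term $(D_\eps(\Theta_\eps)v_{\eps x})_x$ contains $v_{\eps xx}$. Semigroup estimates based on $w_\eps\in L^\infty$ do not bound $v_{\eps xx}$ pointwise in time, because the norm of $\partial_{xx}e^{s\Del}$ from $L^\infty$ to $L^2$ behaves like $s^{-1}$, which is not integrable. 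Take it from the dissipation instead:
\begin{itemize}
\item Testing $v_{\eps t}=\eps v_{\eps xx}+w_\eps$ by $-v_{\eps xx}$ gives $\eps\int_0^t\io v_{\eps xx}^2\le C$, and likewise for $u_\eps$.
\item Testing the $w_\eps$-equation by $-w_{\eps xx}$, and absorbing the coupling term into $\eps\tau\io w_{\eps xx}^2$, then bounds $\io w_{\eps x}^2$ on $(0,\tme)$.
\end{itemize}
Two smaller corrections. In your Schauder step, $v_\eps$ and $u_\eps$ must be treated before $\Theta_\eps$ and $g_\eps$. The sources of the latter contain $v_{\eps x}$ and $u_{\eps x}$, whose H\"older continuity up to $t=0$ is not provided by $W^{1,p}$-continuity. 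Also, as in the paper, the coefficient functions must be extended to negative arguments before nonnegativity of $\Theta_\eps$ is known.

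One caveat applies equally to the paper. The property $g_\eps\in C^{2,1}(\ov\Om\times[0,\tme))$ forces the first-order compatibility condition $g_{0\eps xx}=0$ on $\partial\Om$, since all other terms of the $g_\eps$-equation vanish there at $t=0$. Condition \eqref{2.33} does not guarantee this. Your assertion that the data satisfy the compatibility conditions is therefore correct only at the $W^{1,p}$ level used by Amann's theory.
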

\proof
Following the approach of \cite[Proof of Lemma 3.1]{CW26}, we fix $\eps\in(0,1)$, and choose extensions $D_{0\eps},C_{0\eps},E_{0\eps},F_{0\eps}\in C^\infty(\mathbb{R})$ such that $D_{0\eps}|_{[0,\infty)}=D_\eps$, $C_{0\eps}|_{[0,\infty)}=C_\eps$, $E_{0\eps}|_{[0,\infty)}=E_\eps$ and $F_{0\eps}|_{[0,\infty)}=F_\eps$. The Neumann heat semigroup $(e^{t\Del})_{t\ge 0}$ on $\Om$ satisfies well-known smoothing estimates (\cite{Winkler10}): there are $c_1,c_2,c_3>0$ such that for each $t\in(0,1)$,
\bea{2.41}
\|e^{\eps t\Del}\varphi\|_{W^{1,2}(\Om)}\le c_1 t^{-\frac{1}{2}}\|\varphi\|_{L^\infty(\Om)}\qquad\text{for all }\varphi\in C^0(\ov\Om)
\eea 
and
\bea{2.42}
\|e^{t\Del}\varphi\|_{L^\infty(\Om)}\le c_2 t^{-\frac{1}{2}}\|\varphi\|_{L^1(\Om)}\qquad\text{for all }\varphi\in L^1(\Om),
\eea
as well as
\bea{2.43}
\|e^{\eps t\Del}\partial_x\varphi\|_{L^\infty(\Om)}\le c_3 t^{-\frac{3}{4}}\|\varphi\|_{L^2(\Om)}\qquad\text{for all }\varphi\in C_0^\infty(\Om).
\eea
We moreover note that the maximum principle ensures that for any $t>0$,
\bea{2.44}
\|e^{t\Del}\varphi\|_{W^{1,2}(\Om)}\le \|\varphi\|_{W^{1,2}(\Om)}\qquad\text{for all }\varphi\in W^{1,2}(\Om)
\eea
and
\bea{2.45}
\|e^{t\Del}\varphi\|_{L^\infty(\Om)}\le\|\varphi\|_{L^\infty(\Om)}\qquad\text{for all }\varphi\in C^0(\ov\Om).
\eea
We furthermore fix a constant $c_4>0$ governing the smoothing action of the Dirichlet heat semigroup: denoting by $\Del_D$ the realization of the Laplacian in $\Om$ under the homogeneous Dirichlet condition shows that
\bea{2.45.1}
\|e^{\eps t\Del_D}\varphi\|_{L^\infty(\Om)}\le c_4 t^{-\frac{1}{4}}\|\varphi\|_{L^{2}(\Om)}\qquad\text{for all }\varphi\in L^{2}(\Om)\text{ and }t\in(0,1),
\eea
which follows by the same reasoning as the corresponding estimate for the Neumann heat semigroup established in \cite{Winkler10}. By the maximum principle, this semigroup is moreover non-expansive on $C^0(\ov\Om)$, in analogy to \eqref{2.45}; that is,
\bea{2.45.2}
\|e^{t\Del_D}\varphi\|_{L^\infty(\Om)}\le\|\varphi\|_{L^\infty(\Om)}\qquad\text{for all }\varphi\in C^0(\ov\Om).
\eea
Let $X_0:=C^0(\ov\Om)\times W^{1,2}(\Om)\times W^{1,2}(\Om)\times C^0(\ov\Om)\times C^0(\ov\Om)$, equipped with the norm
$$\big\|(\varphi_1,\varphi_2,\varphi_3,\varphi_4,\varphi_5)\big\|_{X_0}:=\max\Big\{\|\varphi_1\|_{L^\infty(\Om)},\ \|\varphi_2\|_{W^{1,2}(\Om)},\ \|\varphi_3\|_{W^{1,2}(\Om)},\ \|\varphi_4\|_{L^\infty(\Om)},\ \|\varphi_5\|_{L^\infty(\Om)}\Big\}$$
for $(\varphi_1,\varphi_2,\varphi_3,\varphi_4,\varphi_5)\in X_0$, and set
\bea{2.46}
R:=\big\|(w_{0\eps},v_{0\eps},u_{0\eps},\Theta_{0\eps},g_{0\eps})\big\|_{X_0}+1.
\eea
We may then pick $T_0=T_0(R)\in(0,1)$ small enough fulfilling
\bea{2.47}
4c_3\cdot\big\{\|D_{0\eps}\|_{L^\infty([-R,R])}+\|C_{0\eps}\|_{L^\infty([-R,R])}\big\}\cdot R T_0^\frac{1}{4}+RT_0\le \tau,\quad 2c_1RT_0^\frac{1}{2}\le 1\quad\text{and}\quad RT_0\le 1,
\eea
these are the bounds required in \cite[Proof of Lemma 3.1]{CW26} to handle $\Phi_1,\Phi_2,\Phi_3$ below, while additionally
\bea{2.48}
\frac{8c_2}{\tau^2}\|F_{0\eps}\|_{L^\infty([-R,R])}^2\cdot\big\{\|D_{0\eps}\|_{L^\infty([-R,R])}^2+\|C_{0\eps}\|_{L^\infty([-R,R])}^2+|\Om|\big\}\cdot R^2T_0^\frac{1}{2}\hspace{2cm}\nn\\
+8c_2\|E_{0\eps}\|_{L^\infty([-R,R])}^2R^2T_0^\frac{1}{2}\le 1
\eea
and
\bea{2.49}
\frac{4c_4}{3\tau}\cdot\big\{\|D_{0\eps}\|_{L^\infty([-R,R])}+\|C_{0\eps}\|_{L^\infty([-R,R])}\big\}\cdot RT_0^\frac{3}{4}+\frac{1}{\tau}RT_0\le 1.
\eea
For $T\in(0,T_0]$, consider the Banach space $X:=C^0([0,T];X_0)$ with norm $\|\varphi\|_X:=\sup_{t\in[0,T]}\|\varphi(\cdot,t)\|_{X_0}$ for $\varphi\in X$, and the closed set $S:=\{\varphi\in X\ |\ \|\varphi\|_X\le R\}$. Guided by the structure of \eqref{approx}, we introduce a mapping $\Phi=(\Phi_1,\Phi_2,\Phi_3,\Phi_4,\Phi_5):S\to X$ by
$$\big(\Phi_1(w,v,u,\Theta,g)\big)(\cdot,t):=e^{\eps t\Del}w_{0\eps}+\frac{1}{\tau}\int_0^t e^{\eps (t-s)\Del}\partial_x\big\{D_{0\eps}(\Theta)v_x+C_{0\eps}(\Theta)u_x\big\}ds-\frac{1}{\tau}\int_0^t e^{\eps(t-s)\Del}wds,$$
$$\big(\Phi_2(w,v,u,\Theta,g)\big)(\cdot,t):=e^{\eps t\Del}v_{0\eps}+\int_0^t e^{\eps(t-s)\Del}wds,$$
$$\big(\Phi_3(w,v,u,\Theta,g)\big)(\cdot,t):=e^{\eps t\Del}u_{0\eps}+\int_0^t e^{\eps(t-s)\Del}vds,$$
as well as
$$\big(\Phi_4(w,v,u,\Theta,g)\big)(\cdot,t):=e^{t\Del}\Theta_{0\eps}+\int_0^t e^{(t-s)\Del}\Big|E_{0\eps}(\Theta)v_x+\frac{F_{0\eps}(\Theta)}{\tau}\big\{D_{0\eps}(\Theta)v_x+C_{0\eps}(\Theta)u_x-g\big\}\Big|^2ds$$
and
$$\big(\Phi_5(w,v,u,\Theta,g)\big)(\cdot,t):=e^{\eps t\Del_D}g_{0\eps}+\frac{1}{\tau}\int_0^t e^{\eps(t-s)\Del_D}\big\{D_{0\eps}(\Theta)v_x+C_{0\eps}(\Theta)u_x\big\}ds-\frac{1}{\tau}\int_0^t e^{\eps(t-s)\Del_D}gds$$
for $(w,v,u,\Theta,g)\in S$ and $t\in[0,T]$. We now verify that $\Phi$ maps $S$ into itself. Since $\Phi_1$, $\Phi_2$ and $\Phi_3$ coincide, under the substitution described in the Introduction, with the corresponding operators in \cite[Proof of Lemma 3.1]{CW26}, the same computations, based on \eqref{2.41} and \eqref{2.43}-\eqref{2.45}, show that for all $t\in[0,T]$
\bea{nn}
&&\hspace{-10mm}\big\|\big(\Phi_1(w,v,u,\Theta,g)\big)(\cdot,t)\big\|_{L^\infty(\Om)}\nn\\
&&\le \|w_{0\eps}\|_{L^\infty(\Om)}+\frac{4c_3}{\tau}\cdot\big\{\|D_{0\eps}\|_{L^\infty([-R,R])}+\|C_{0\eps}\|_{L^\infty([-R,R])}\big\}\cdot RT^\frac{1}{4}+\frac{1}{\tau}RT\le R,\nn
\eea
$$\big\|\big(\Phi_2(w,v,u,\Theta,g)\big)(\cdot,t)\big\|_{W^{1,2}(\Om)}\le \|v_{0\eps}\|_{W^{1,2}(\Om)}+2c_1RT^\frac{1}{2}\le R,$$
$$\big\|\big(\Phi_3(w,v,u,\Theta,g)\big)(\cdot,t)\big\|_{W^{1,2}(\Om)}\le \|u_{0\eps}\|_{W^{1,2}(\Om)}+RT\le R,$$
where the last inequality in each line holds because of \eqref{2.46} and \eqref{2.47}. Similarly, \eqref{2.42} and \eqref{2.45} together with \eqref{2.46} and \eqref{2.48} ensure that
\bea{nn}
&&\hspace{-10mm}\big\|\big(\Phi_4(w,v,u,\Theta,g)\big)(\cdot,t)\big\|_{L^\infty(\Om)}\nn\\
&\le& \|\Theta_{0\eps}\|_{L^\infty(\Om)}+c_2\int_0^t(t-s)^{-\frac{1}{2}}\bigg\|\Big|E_{0\eps}(\Theta)v_x+\frac{F_{0\eps}(\Theta)}{\tau}\big\{D_{0\eps}(\Theta)v_x+C_{0\eps}(\Theta)u_x-g\big\}\Big|^2\bigg\|_{L^1(\Om)}ds\nn\\
&\le&\|\Theta_{0\eps}\|_{L^\infty(\Om)}+\frac{4c_2}{\tau^2}\int_0^t(t-s)^{-\frac{1}{2}}\big\|F_{0\eps}^2(\Theta)\big\{|D_{0\eps}(\Theta)v_x|^2+|C_{0\eps}(\Theta)u_x|^2+g^2\big\}\big\|_{L^1(\Om)}ds\nn\\
&&+4c_2\int_0^t(t-s)^{-\frac{1}{2}}\big\||E_{0\eps}(\Theta)v_x|^2\big\|_{L^1(\Om)}ds\nn\\
&\le& \|\Theta_{0\eps}\|_{L^\infty(\Om)}+\frac{4c_2}{\tau^2}\|F_{0\eps}\|_{L^\infty([-R,R])}^2\cdot\big\{\|D_{0\eps}\|_{L^\infty([-R,R])}^2+\|C_{0\eps}\|_{L^\infty([-R,R])}^2+|\Om|\big\}\cdot R^2\int_0^t(t-s)^{-\frac{1}{2}}ds\nn\\
&&+4c_2\|E_{0\eps}\|_{L^\infty([-R,R])}^2R^2\int_0^t(t-s)^{-\frac{1}{2}}ds\nn\\
&\le& \|\Theta_{0\eps}\|_{L^\infty(\Om)}+\frac{8c_2}{\tau^2}\|F_{0\eps}\|_{L^\infty([-R,R])}^2\cdot\big\{\|D_{0\eps}\|_{L^\infty([-R,R])}^2+\|C_{0\eps}\|_{L^\infty([-R,R])}^2+|\Om|\big\}\cdot R^2T^\frac{1}{2}\nn\\
&&+8c_2\|E_{0\eps}\|_{L^\infty([-R,R])}^2R^2T^\frac{1}{2}\nn\\
&\le& R\qquad\text{for all }t\in[0,T],\nn
\eea
and that, using \eqref{2.45.1} and \eqref{2.45.2}, together with \eqref{2.46} and \eqref{2.49},
\bea{nn}
&&\hspace{-10mm}\big\|\big(\Phi_5(w,v,u,\Theta,g)\big)(\cdot,t)\big\|_{L^\infty(\Om)}\nn\\
&\le&\|g_{0\eps}\|_{L^\infty(\Om)}+\frac{c_4}{\tau}\int_0^t(t-s)^{-\frac{1}{4}} \cdot\big\{\|D_{0\eps}(\Theta)v_x\|_{L^2(\Om)}+\|C_{0\eps}(\Theta)u_x\|_{L^2(\Om)}\big\}ds+\frac{1}{\tau}\int_0^t \|g\|_{L^\infty(\Om)}ds\nn\\
&\le&\|g_{0\eps}\|_{L^\infty(\Om)}+\frac{c_4}{\tau}\cdot\big\{\|D_{0\eps}\|_{L^\infty([-R,R])}+\|C_{0\eps}\|_{L^\infty([-R,R])}\big\}\cdot R\int_0^t(t-s)^{-\frac{1}{4}}ds+\frac{1}{\tau}RT\nn\\
&\le&\|g_{0\eps}\|_{L^\infty(\Om)}+\frac{4c_4}{3\tau}\cdot\big\{\|D_{0\eps}\|_{L^\infty([-R,R])}+\|C_{0\eps}\|_{L^\infty([-R,R])}\big\}\cdot RT^\frac{3}{4}+\frac{1}{\tau}RT\nn\\
&\le& R\qquad\text{for all }t\in[0,T],
\eea
because $T\le T_0\le 1$. Combining these five estimates, we conclude that $\Phi$ maps $S$ into itself for every $T\in(0,T_0]$. A straightforward adaptation of the same computations shows moreover that $\Phi$ is a contraction on $S$ provided $T=T(R)\in(0,T_0]$ is chosen sufficiently small. Fixing such a $T$, the Banach fixed point theorem yields  $(w_\eps,v_\eps,u_\eps,\Theta_\eps,g_\eps)\in S$ satisfying $\Phi(w_\eps,v_\eps,u_\eps,\Theta_\eps,g_\eps)=(w_\eps,v_\eps,u_\eps,\Theta_\eps,g_\eps)$, and standard parabolic regularity theory (\cite{LSU}) shows that these functions in fact belong to $C^{2,1}(\ov\Om\times[0,T])\cap C^\infty(\ov\Om\times(0,T])$, and solve the problem obtained from \eqref{approx} upon replacing $D_\eps,C_\eps,E_\eps$ and $F_\eps$ by $D_{0\eps},C_{0\eps},E_{0\eps}$ and $F_{0\eps}$. Since $\Theta_{0\eps}\ge 0$ and the source term $|E_{0\eps}(\Theta_\eps)v_{\eps x}+F_{0\eps}(\Theta_\eps)g_{\eps t}|^2$ is nonnegative in $\Om\times(0,T)$, a comparison principle warrants that $\Theta_\eps\ge 0$ throughout $\Om\times(0,T)$ and that thus we actually have $D_{0\eps}(\Theta_\eps)\equiv D_\eps(\Theta_\eps)$, $C_{0\eps}(\Theta_\eps)\equiv C_\eps(\Theta_\eps)$, $E_{0\eps}(\Theta_\eps)\equiv E_\eps(\Theta_\eps)$ and $F_{0\eps}(\Theta_\eps)\equiv F_\eps(\Theta_\eps)$ in $\Om\times(0,T)$, which identifies $(w_\eps,v_\eps,u_\eps,\Theta_\eps,g_\eps)$ as a classical solution of \eqref{approx} itself on $\Om\times(0,T)$. Since the length of $T$ constructed above depends on the initial data only through their norm in $X_0$, a standard continuation argument extends this solution to a maximal interval $\Om\times(0,\tme)$ with $\tme\in(0,\infty]$ satisfying the extensibility criterion \eqref{2.39}. The conservation properties in \eqref{2.40}, finally, result from an integration in \eqref{approx} using \eqref{2.34}.
\qed
We now derive bounds ensuring that the coefficients $D_\eps(\Theta_\eps)$, $C_\eps(\Theta_\eps)$, $E_\eps(\Theta_\eps)$, $F_\eps(\Theta_\eps)$ and their derivatives stay controlled as long as $\Theta_\eps$, $\Theta_{\eps t}$ and $g_\eps$ do.
\begin{lem}\label{lem2.4}
There exists $M>0$ such that, writing
\bea{3.24}
\Theta_{0\eps t}:=\Theta_{0\eps xx}+|E_\eps(\Theta_{0\eps})v_{0\eps x}+F_\eps(\Theta_{0\eps})h_{0\eps}|^2,\qquad\eps\in(0,1),
\eea
with
\bea{nn}
h_{0\eps}:=\frac{1}{\tau}\big\{D_\eps(\Theta_{0\eps})v_{0\eps x}+C_\eps(\Theta_{0\eps})u_{0\eps x}-g_{0\eps}\big\},\qquad\eps\in(0,1),\nn
\eea
we have
\bea{3.25}
\|\Theta_{0\eps}\|_{L^\infty(\Om)}+\|\Theta_{0\eps t}\|_{L^\infty(\Om)}+\|g_{0\eps}\|_{L^\infty(\Om)}\le\frac{M}{12}\qquad\text{for all }\eps\in(0,1).
\eea
Moreover, there exist $\eps_\star\in(0,1)$ and $k_i>0$, $i\in\{1,\dots,12\}$, such that if $\eps\in(0,\eps_\star)$ and $T\in(0,\tme)$ are such that
\bea{3.26}
\|\Theta_\eps(\cdot,t)\|_{L^\infty(\Om)}+\|\Theta_{\eps t}(\cdot,t)\|_{L^\infty(\Om)}+\|g_\eps(\cdot,t)\|_{L^\infty(\Om)}\le M\qquad\text{for all }t\in(0,T),
\eea
then
\bea{3.27}
k_1\le D_\eps(\Theta_\eps)\le k_2,\quad k_3\le C_\eps(\Theta_\eps)\le k_4,\quad |E_\eps(\Theta_\eps)|\le k_5\quad\text{and}\quad |F_\eps(\Theta_\eps)|\le k_6\quad\text{in }\Om\times(0,T)
\eea
and
\bea{3.28}
|D'_\eps(\Theta_\eps)|\le k_7,\quad |C'_\eps(\Theta_\eps)|\le k_8,\quad |D''_\eps(\Theta_\eps)|\le k_9\quad\text{and}\quad |C''_\eps(\Theta_\eps)|\le k_{10}\quad\text{in }\Om\times(0,T)
\eea
as well as
\bea{3.28.1}
|E'_\eps(\Theta_\eps)|\le k_{11}\quad\text{and}\quad |F'_\eps(\Theta_\eps)|\le k_{12}\quad\text{in }\Om\times(0,T).
\eea
\end{lem}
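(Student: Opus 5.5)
The first claim \eqref{3.25} follows from the uniform boundedness of the approximating data in the norms appearing in \eqref{2.35}, combined with the embeddings available in one space dimension. From $\Theta_{0\eps}\stackrel{\ast}{\rightharpoonup}\Theta_0$ in $W^{2,\infty}(\Om)$ and the uniform boundedness principle, $(\Theta_{0\eps})$ is bounded in $W^{2,\infty}(\Om)$. Hence both $\|\Theta_{0\eps}\|_{L^\infty(\Om)}$ and $\|\Theta_{0\eps xx}\|_{L^\infty(\Om)}$ are bounded, and there is $K>0$ with $0\le\Theta_{0\eps}\le K$. The convergences $u_{0\eps}\to u_0$ and $v_{0\eps}\to u_{0t}$ in $W^{2,2}(\Om)$, together with $W^{2,2}(\Om)\hookrightarrow C^1(\ov\Om)$, bound $u_{0\eps x}$ and $v_{0\eps x}$ in $L^\infty(\Om)$. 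Likewise $g_{0\eps}\to g_0$ in $W^{1,2}(\Om)\hookrightarrow C^0(\ov\Om)$ bounds $\|g_{0\eps}\|_{L^\infty(\Om)}$. Finally, \eqref{2.31} and \eqref{2.32} give uniform convergence of $D_\eps,C_\eps,E_\eps,F_\eps$ on $[0,K]$, so these functions are bounded there uniformly in $\eps$. Inserting all of this into the definition of $h_{0\eps}$ and into \eqref{3.24} bounds $\|\Theta_{0\eps t}\|_{L^\infty(\Om)}$, and we take $M$ to be $12$ times the resulting supremum of the left-hand side of \eqref{3.25}.

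A technical point is that convergence as $\eps\searrow0$ only yields these bounds for $\eps$ in some interval $(0,\eps_0)$. I would handle this by noting that the standard smoothing procedures behind \eqref{2.30}--\eqref{2.36} can be arranged so that all listed norms are bounded for every $\eps\in(0,1)$; for instance, mollification with cut-off satisfies this, or one may simply relabel the family on $[\eps_0,1)$ by reusing a fixed member. This is the only step that needs any care, and it is a matter of bookkeeping rather than analysis.

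For the second part, let $\eps$ and $T$ be such that \eqref{3.26} holds. Since Lemma \ref{lem2.3} guarantees $\Theta_\eps\ge0$, we get $\Theta_\eps\in[0,M]$ in $\Om\times(0,T)$. Because $D$ and $C$ are positive and continuous on the compact interval $[0,M]$, the numbers $\underline d:=\min_{[0,M]}D$ and $\underline c:=\min_{[0,M]}C$ are positive. Using \eqref{2.31}, I fix $\eps_\star\in(0,1)$ such that for all $\eps\in(0,\eps_\star)$,
\bas
\|D_\eps-D\|_{C^2([0,M])}\le\tfrac{\underline d}{2},\qquad \|C_\eps-C\|_{C^2([0,M])}\le\tfrac{\underline c}{2},\qquad \|E_\eps-E\|_{C^1([0,M])}+\|F_\eps-F\|_{C^1([0,M])}\le1,
\eas
the last condition being possible by \eqref{2.32}. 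Then \eqref{3.27}, \eqref{3.28} and \eqref{3.28.1} hold with the following constants:
\bas
k_1:=\tfrac{\underline d}{2},\qquad k_2:=\|D\|_{L^\infty([0,M])}+\tfrac{\underline d}{2},\qquad k_3:=\tfrac{\underline c}{2},\qquad k_4:=\|C\|_{L^\infty([0,M])}+\tfrac{\underline c}{2},
\eas
\bas
k_5:=\|E\|_{L^\infty([0,M])}+1,\qquad k_6:=\|F\|_{L^\infty([0,M])}+1,
\eas
\bas
k_7:=\|D'\|_{L^\infty([0,M])}+\tfrac{\underline d}{2},\qquad k_8:=\|C'\|_{L^\infty([0,M])}+\tfrac{\underline c}{2},
\eas
\bas
k_9:=\|D''\|_{L^\infty([0,M])}+\tfrac{\underline d}{2},\qquad k_{10}:=\|C''\|_{L^\infty([0,M])}+\tfrac{\underline c}{2},
\eas
\bas
k_{11}:=\|E'\|_{L^\infty([0,M])}+1,\qquad k_{12}:=\|F'\|_{L^\infty([0,M])}+1.
\eas
None of these constants depends on $\eps$ or on $T$.

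The bounds on $\Theta_{\eps t}$ and $g_\eps$ contained in \eqref{3.26} play no role in this coefficient control. They are included only so that the same $M$ can be reused in the subsequent bootstrap argument.
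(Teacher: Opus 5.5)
Your proposal is correct and follows the paper's own, very brief, argument. As in the paper, \eqref{3.25} comes from the uniform bounds implied by \eqref{2.31}, \eqref{2.32}, \eqref{2.35} together with the one-dimensional embeddings, and \eqref{3.27}--\eqref{3.28.1} come from the $C_{loc}^2$ resp.\ $C_{loc}^1$ convergence on $[0,M]$, using $\Theta_\eps\ge 0$ and the positivity of $D$ and $C$. Your explicit constants, and your remark that convergence as $\eps\searrow 0$ only controls small $\eps$, only make precise what the paper leaves implicit. One small fix: take $M$ strictly positive, e.g.\ $12$ times the supremum plus $1$, to cover vanishing data.
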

\proof
The inequality \eqref{3.25} follows immediately from \eqref{2.31}, \eqref{2.32} and \eqref{2.35}, while \eqref{3.27}-\eqref{3.28.1} result directly from the convergence properties in \eqref{2.31} and \eqref{2.32} and the positivity of $D$ and $C$ on $[0,\infty)$.
\qed
We next bound the spatial derivatives of $\Theta_\eps$ in terms of those of $v_\eps$ and $u_\eps$.
\begin{lem}\label{lem2.5}
Let $M$ and $\eps_\star$ be as in Lemma \ref{lem2.4}. Then there exists $k_{13}>0$ such that if $\eps\in(0,\eps_\star)$ and $T\in(0,\tme)$ are such that \eqref{3.26} is satisfied, then 
\bea{3.29}
\|\Theta_{\eps xx}(\cdot,t)\|_{L^\infty(\Om)}+\|\Theta_{\eps x}(\cdot,t)\|_{L^\infty(\Om)}^2\le k_{13}\|v_{\eps x}(\cdot,t)\|_{L^\infty(\Om)}^2+k_{13}\|u_{\eps x}(\cdot,t)\|_{L^\infty(\Om)}^2+k_{13}
\eea
for all $t\in(0,T)$.
\end{lem}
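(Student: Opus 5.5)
The plan is to read off $\Theta_{\eps xx}$ from the heat equation in \eqref{approx} and then get $\Theta_{\eps x}$ by interpolation with the Neumann boundary condition. Solving the fourth equation of \eqref{approx} for the second derivative gives
\begin{equation*}
\Theta_{\eps xx}=\Theta_{\eps t}-|E_\eps(\Theta_\eps)v_{\eps x}+F_\eps(\Theta_\eps)h_\eps|^2 \qquad\text{in }\Om\times(0,T).
\end{equation*}
By \eqref{3.26}, the first term on the right is bounded by $M$.

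For the source term, insert the definition \eqref{2.36.1} of $h_\eps$ and use the bounds \eqref{3.27} on $D_\eps,C_\eps,E_\eps,F_\eps$. This gives
\begin{equation*}
|E_\eps(\Theta_\eps)v_{\eps x}+F_\eps(\Theta_\eps)h_\eps|\le \Big(k_5+\frac{k_2k_6}{\tau}\Big)|v_{\eps x}|+\frac{k_4k_6}{\tau}|u_{\eps x}|+\frac{k_6}{\tau}|g_\eps|.
\end{equation*}
By \eqref{3.26} we have $|g_\eps|\le M$. Squaring with $(a+b+c)^2\le 3(a^2+b^2+c^2)$ then yields
\begin{equation*}
\|\Theta_{\eps xx}(\cdot,t)\|_{L^\infty(\Om)}\le c_1\|v_{\eps x}(\cdot,t)\|_{L^\infty(\Om)}^2+c_1\|u_{\eps x}(\cdot,t)\|_{L^\infty(\Om)}^2+c_1 .
\end{equation*}
This is the only point where $g$ enters. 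It is harmless because \eqref{3.26} was set up, via Lemma \ref{lem2.4}, to include $\|g_\eps\|_{L^\infty(\Om)}$, so $g_\eps$ contributes only a constant.

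For the first derivative, $\Theta_{\eps x}(\cdot,t)$ vanishes on $\partial\Om$ by the Neumann condition. Hence for each $x\in\Om$ we have $\Theta_{\eps x}(x,t)=\int_{x_0}^x\Theta_{\eps xx}(y,t)\,dy$, where $x_0$ is an endpoint of $\Om$. Therefore
\begin{equation*}
\|\Theta_{\eps x}(\cdot,t)\|_{L^\infty(\Om)}\le |\Om|\,\|\Theta_{\eps xx}(\cdot,t)\|_{L^\infty(\Om)}.
\end{equation*}

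Squaring this naively would produce fourth powers of $\|v_{\eps x}\|_{L^\infty(\Om)}$, which is too much. This is the one step needing care, and I would handle it with the one-dimensional Gagliardo--Nirenberg-type interpolation $\|\varphi_x\|_{L^\infty(\Om)}^2\le c_2\|\varphi\|_{L^\infty(\Om)}\|\varphi_{xx}\|_{L^\infty(\Om)}+c_2\|\varphi\|_{L^\infty(\Om)}^2$. Applied to $\varphi=\Theta_\eps(\cdot,t)$, it gives
\begin{equation*}
\|\Theta_{\eps x}(\cdot,t)\|_{L^\infty(\Om)}^2\le c_2M\|\Theta_{\eps xx}(\cdot,t)\|_{L^\infty(\Om)}+c_2M^2 .
\end{equation*}
The interpolation itself is elementary in 1D: choose a point where $|\Theta_{\eps x}|$ is small by the mean value theorem on a subinterval of length $\ell$, then integrate $\Theta_{\eps xx}$ and optimize in $\ell$, with $\ell$ capped by $|\Om|$. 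If there is difficulty there, the standard reference form of the Gagliardo--Nirenberg inequality can be cited instead.

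Combining the two bounds then yields \eqref{3.29}, with $k_{13}$ depending only on $M$, $\tau$, $|\Om|$ and $k_2,k_4,k_5,k_6$, and not on $\eps$.
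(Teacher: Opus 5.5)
Your proposal is correct and follows the paper's proof. Both read $\Theta_{\eps xx}$ off the fourth equation of \eqref{approx}, bound the squared source via \eqref{3.26}--\eqref{3.27} (with $g_\eps$ contributing only a constant), and control $\|\Theta_{\eps x}\|_{L^\infty(\Om)}^2$ by a one-dimensional Gagliardo--Nirenberg interpolation against $\|\Theta_\eps\|_{L^\infty(\Om)}\le M$. Your interpolation, which carries the lower-order term $c_2\|\varphi\|_{L^\infty(\Om)}^2$, is slightly more careful than the paper's: the paper states the inequality without that term for all $\varphi\in C^2(\ov\Om)$ (false for linear $\varphi$, valid here only because $\Theta_{\eps x}=0$ on $\partial\Om$) and writes an $L^2$ rather than $L^\infty$ norm on the left.
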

\proof
By the fourth equation in \eqref{approx}, with $h_\eps$ as in \eqref{2.36.1}, and using \eqref{3.26} and \eqref{3.28},
\bea{nn}
|\Theta_{\eps xx}|&=&\bigg|\Theta_{\eps t}-\Big|E_\eps(\Theta_\eps)v_{\eps x}+\frac{F_\eps(\Theta_\eps)}{\tau}\big\{D_\eps(\Theta_\eps)v_{\eps x}+C_\eps(\Theta_\eps)u_{\eps x}-g_\eps\big\}\Big|^2\bigg|\nn\\
&\le& |\Theta_{\eps t}|+4|E_\eps(\Theta_\eps)v_{\eps x}|^2+\frac{4F_\eps^2(\Theta_\eps)}{\tau^2}\big\{|D_\eps(\Theta_\eps)v_{\eps x}|^2+|C_\eps(\Theta_\eps)u_{\eps x}|^2+g_\eps^2\big\}\nn\\
&\le& M+4k_5^2v_{\eps x}^2+\frac{4k_6^2}{\tau^2}\big\{k_2^2v_{\eps x}^2+k_4^2u_{\eps x}^2 +M^2\big\}\qquad\text{in }\Om\times(0,T).\nn
\eea
Combined with the Gagliardo-Nirenberg inequality
$$\|\varphi_x\|_{L^2(\Om)}^2\le c_1\|\varphi_{xx}\|_{L^\infty(\Om)}\|\varphi\|_{L^\infty(\Om)}\qquad\text{for all }\varphi\in C^2(\ov\Om),$$
valid for some $c_1>0$, this yields \eqref{3.29} with $k_{13}:=(1+c_1M)\cdot\max\big\{M+\frac{4k_6^2M^2}{\tau^2},4k_5^2+\frac{4k_2^2k_6^2}{\tau^2},\frac{4k_4^2k_6^2}{\tau^2}\big\}$.
\qed
As in \cite{CW26}, we now bound $w_{\eps x}$, $v_{\eps xx}$ and $u_{\eps xx}$, in three steps that mirror the structure of the argument there:
\begin{lem}\label{lem2.6a}
There exists $k_{14}>0$ such that if $\eps_\star$ and $(k_i)_{i\in\{1,\dots,12\}}$ as in Lemma \ref{lem2.4}, we let $B:=\frac{4k_4^2}{k_1\tau}$ and
\bea{re2.53}
y_\eps(t):=\frac{1}{2}\io w_{\eps x}^2+\frac{1}{2\tau}\io D_\eps(\Theta_\eps)v_{\eps xx}^2+\frac{1}{\tau}\io C_\eps(\Theta_\eps)u_{\eps xx}v_{\eps xx}+\frac{B}{2}\io u_{\eps xx}^2+\frac{\eps}{\tau}\io C_\eps(\Theta_\eps)u_{\eps xxx}^2\ \
\eea
for $t\in(0,\tme)$ and $\eps\in(0,1)$, then whenever $\eps\in(0,\eps_\star)$ and $T\in(0,\tme)$ are such that \eqref{3.26} holds with $M$ taken from Lemma \ref{lem2.4}, it follows that
\bea{re2.54}
k_{14}y_\eps(t)\ge \io w_{\eps x}^2+\io v_{\eps xx}^2+\io u_{\eps xx}^2+\eps\io u_{\eps xxx}^2\qquad\text{for all }t\in(0,T).
\eea
\end{lem}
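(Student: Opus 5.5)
The plan is a purely pointwise-in-time coercivity argument based on \eqref{3.27}; no evolution equation is needed. Fix $\eps\in(0,\eps_\star)$ and $T\in(0,\tme)$ such that \eqref{3.26} holds. Lemma \ref{lem2.4} then gives $k_1\le D_\eps(\Theta_\eps)\le k_2$ and $k_3\le C_\eps(\Theta_\eps)\le k_4$ in $\Om\times(0,T)$. From these bounds I will estimate each term of $y_\eps(t)$ from below.

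\textbf{Easy terms.} The first term $\frac12\io w_{\eps x}^2$ is kept as it stands. For the second, $\frac{1}{2\tau}\io D_\eps(\Theta_\eps)v_{\eps xx}^2\ge \frac{k_1}{2\tau}\io v_{\eps xx}^2$. The last term satisfies $\frac{\eps}{\tau}\io C_\eps(\Theta_\eps)u_{\eps xxx}^2\ge\frac{k_3\eps}{\tau}\io u_{\eps xxx}^2$, using the positive lower bound $k_3$.

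\textbf{Mixed term (main step).} The only indefinite contribution is $\frac1\tau\io C_\eps(\Theta_\eps)u_{\eps xx}v_{\eps xx}$. By Young's inequality,
\begin{equation*}
\frac1\tau\Big|\io C_\eps(\Theta_\eps)u_{\eps xx}v_{\eps xx}\Big|\le \frac{k_4}{\tau}\io|u_{\eps xx}|\,|v_{\eps xx}|\le \frac{k_1}{4\tau}\io v_{\eps xx}^2+\frac{k_4^2}{k_1\tau}\io u_{\eps xx}^2 .
\end{equation*}
With $B=\frac{4k_4^2}{k_1\tau}$ we have $\frac B2\io u_{\eps xx}^2=\frac{2k_4^2}{k_1\tau}\io u_{\eps xx}^2$. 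Subtracting the Young bound therefore leaves $\frac{k_4^2}{k_1\tau}\io u_{\eps xx}^2=\frac B4\io u_{\eps xx}^2$, and the $v_{\eps xx}$-coefficient becomes $\frac{k_1}{4\tau}$.

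\textbf{Conclusion.} Combining the pieces gives
\begin{equation*}
y_\eps(t)\ge \frac12\io w_{\eps x}^2+\frac{k_1}{4\tau}\io v_{\eps xx}^2+\frac B4\io u_{\eps xx}^2+\frac{k_3\eps}{\tau}\io u_{\eps xxx}^2\qquad\text{for all }t\in(0,T).
\end{equation*}
This yields \eqref{re2.54} with
\begin{equation*}
k_{14}:=\max\Big\{2,\ \frac{4\tau}{k_1},\ \frac4B,\ \frac{\tau}{k_3}\Big\},
\end{equation*}
which depends only on $\tau$ and the $k_i$ and hence is independent of $\eps$ and $T$. The only real point is the choice of $B$ that absorbs the cross term, and the given $B$ is sized exactly for this. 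One should also note that the regularity \eqref{2.37} makes all the integrals finite.
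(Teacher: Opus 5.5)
Your proof is correct and follows essentially the paper's route. The paper, deferring to \cite{CW26}, also obtains \eqref{re2.54} by a pointwise Young's-inequality absorption of the cross term $\frac1\tau\io C_\eps(\Theta_\eps)u_{\eps xx}v_{\eps xx}$, using $k_1\le D_\eps(\Theta_\eps)$ and $k_3\le C_\eps(\Theta_\eps)\le k_4$ from Lemma \ref{lem2.4}. Your explicit constants check out, including $\frac B2-\frac{k_4^2}{k_1\tau}=\frac B4$ and $k_{14}=\max\{2,\frac{4\tau}{k_1},\frac4B,\frac{\tau}{k_3}\}$.
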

\proof
The quantity $y_\eps$ arises exactly as in \cite[Lemma 3.7]{CW26}, by testing the first three equations of \eqref{approx} against $w_{\eps x}$, $v_{\eps xx}$ and $u_{\eps xx}$, respectively, as carried out in detail in \cite[Lemma 3.4 and Lemma 3.6]{CW26}; since these three equations coincide with those in \cite{CW26} under the substitution described in the Introduction, the resulting coercivity bound, obtained there by a Young's inequality argument exploiting the bounds from Lemma \ref{lem2.4}.
\qed
\begin{lem}\label{lem2.6b}
With $M$ and $\eps_\star$ taken from Lemma \ref{lem2.4}, one can find $k_{15}>0$ with the property that if $\eps\in(0,\eps_\star)$ and $T\in(0,\tme)$ are such that \eqref{3.26} holds, then the functions in \eqref{re2.53} satisfies
\bea{re2.55}
y_\eps'(t)+k_{1}\eps\io v_{\eps xxx}^2\le k_{15}y_\eps^2(t)+k_{15}y_\eps(t)\qquad\text{for all }t\in(0,T).
\eea
\end{lem}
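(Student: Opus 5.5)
The plan is to reproduce the differential inequality for $y_\eps$ from \cite[Lemma 3.7]{CW26}. The first three equations of \eqref{approx} coincide with those treated there under the substitution \eqref{e11}. The component $g_\eps$ therefore enters the computation only through $\Theta_\eps$: through $\Theta_{\eps t}$, which appears when the $\Theta_\eps$-dependent weights in $y_\eps$ are differentiated in time, and through $\Theta_{\eps x}$, $\Theta_{\eps xx}$, which appear after integrating by parts in the terms coming from $(D_\eps(\Theta_\eps)v_{\eps x})_x$ and $(C_\eps(\Theta_\eps)u_{\eps x})_x$. The point is that \cite{CW26} controls these quantities only through information that is still available here: the pointwise bound \eqref{3.26} on $\Theta_{\eps t}$, and the estimate of Lemma \ref{lem2.5} for $\Theta_{\eps x}$, $\Theta_{\eps xx}$. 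Lemma \ref{lem2.5} already absorbs the new $g$-dependent heat source, since $\|g_\eps\|_{L^\infty(\Om)}\le M$ by \eqref{3.26}.

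\textbf{Step 1: differentiate $y_\eps$.} I would compute $y_\eps'$ directly. Differentiate the first equation of \eqref{approx} in $x$ and test with $w_{\eps x}$. Differentiate the second and third equations twice in $x$ and test with $D_\eps(\Theta_\eps)v_{\eps xx}$, $C_\eps(\Theta_\eps)u_{\eps xx}$ and $Bu_{\eps xx}$ as they occur. The key structural cancellation, exactly as in \cite{CW26}, is between $\frac1\tau\io (D_\eps(\Theta_\eps)v_{\eps x}+C_\eps(\Theta_\eps)u_{\eps x})_{xx}w_{\eps x}$ and the terms $\frac1\tau\io D_\eps(\Theta_\eps)v_{\eps xx}w_{\eps xx}+\frac1\tau\io C_\eps(\Theta_\eps)u_{\eps xx}w_{\eps xx}$ generated by $v_{\eps t}=\eps v_{\eps xx}+w_\eps$. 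The boundary terms vanish because $w_{\eps x}=v_{\eps x}=u_{\eps x}=0$ on $\partial\Om$. The $\eps$-dependent diffusion terms produce the dissipative contributions $-\eps\io w_{\eps xx}^2$ and $-\frac{\eps}{\tau}\io D_\eps(\Theta_\eps)v_{\eps xxx}^2\le -\frac{k_1\eps}{\tau}\io v_{\eps xxx}^2$. The extra summand $\frac\eps\tau\io C_\eps(\Theta_\eps)u_{\eps xxx}^2$ in $y_\eps$ compensates the cross term $\frac\eps\tau\io C_\eps(\Theta_\eps)u_{\eps xxx}v_{\eps xxx}$, which otherwise could not be absorbed.

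\textbf{Step 2: bound the remainder.} What remains are integrals of products such as $D_\eps'(\Theta_\eps)\Theta_{\eps t}v_{\eps xx}^2$, $D'_\eps(\Theta_\eps)\Theta_{\eps xx}v_{\eps x}w_{\eps x}$, $D''_\eps(\Theta_\eps)\Theta_{\eps x}^2v_{\eps x}w_{\eps x}$ and $D_\eps'(\Theta_\eps)\Theta_{\eps x}v_{\eps xx}w_{\eps x}$, together with their $C_\eps$-analogues. I would estimate these using \eqref{3.27}--\eqref{3.28}, the bound $|\Theta_{\eps t}|\le M$, and \eqref{3.29}. By \eqref{2.40} and the Poincaré and Sobolev inequalities in one dimension, $\|v_{\eps x}\|_{L^\infty(\Om)}^2\le c\io v_{\eps xx}^2$ and $\|u_{\eps x}\|_{L^\infty(\Om)}^2\le c\io u_{\eps xx}^2$, so both are at most $ck_{14}y_\eps$ by Lemma \ref{lem2.6a}. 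Hence \eqref{3.29} gives $\|\Theta_{\eps xx}\|_{L^\infty(\Om)}+\|\Theta_{\eps x}\|_{L^\infty(\Om)}^2\le c(y_\eps+1)$. A typical worst term is then bounded by
\[
\|\Theta_{\eps xx}\|_{L^\infty(\Om)}\|v_{\eps x}\|_{L^2(\Om)}\|w_{\eps x}\|_{L^2(\Om)}\le c(y_\eps+1)y_\eps .
\]
The terms with $\Theta_{\eps x}$ and $\Theta_{\eps t}$ are bounded by $c(y_\eps+1)y_\eps$ in the same way. Terms carrying a factor $\eps$, such as $\eps\io D'_\eps(\Theta_\eps)\Theta_{\eps x}v_{\eps xx}v_{\eps xxx}$, are handled by Young's inequality: half of the dissipation $\frac{k_1\eps}{\tau}\io v_{\eps xxx}^2$ absorbs them, at the cost of $c\eps\|\Theta_{\eps x}\|_{L^\infty(\Om)}^2\io v_{\eps xx}^2\le c(y_\eps+1)y_\eps$. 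Terms involving $u_{\eps xxx}$ are bounded by using $\eps\io u_{\eps xxx}^2\le k_{14}y_\eps$.

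\textbf{Step 3: close and identify the obstacle.} Collecting everything yields \eqref{re2.55} after renaming constants. The factor $k_1$ in front of $\eps\io v_{\eps xxx}^2$ is achieved by keeping a suitable fraction of the dissipation; if the natural coefficient is instead $k_1/(2\tau)$, I would adjust the normalization of $y_\eps$ or the constants accordingly. The main obstacle I expect is the bookkeeping of the $\eps$-terms generated by the mixed weight $C_\eps(\Theta_\eps)u_{\eps xx}v_{\eps xx}$ and by $\frac\eps\tau\io C_\eps(\Theta_\eps)u_{\eps xxx}^2$. Differentiating the latter brings in $\Theta_{\eps t}$ and, via $u_{\eps t}=\eps u_{\eps xx}+v_\eps$, the term $\eps^2\io C_\eps(\Theta_\eps)u_{\eps xxx}u_{\eps xxxxx}$. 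This has to be integrated by parts to $-\eps^2\io C_\eps(\Theta_\eps)u_{\eps xxxx}^2$ plus lower-order terms, using the boundary information obtained from the $u$-equation with $u_{\eps x}=0$ on $\partial\Om$, and then treated precisely as in \cite{CW26}. Since none of this involves $g_\eps$, I would import those computations verbatim and only re-check that every appearance of $\Theta_{\eps xx}$ or $\Theta_{\eps t}$ is routed through \eqref{3.26} and Lemma \ref{lem2.5}.
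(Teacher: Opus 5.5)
Your proposal follows the paper's proof. The paper likewise imports the computation of \cite[Lemma 3.8]{CW26} unchanged and modifies only the control of $\Theta_{\eps x}$ and $\Theta_{\eps xx}$: it bounds $\|\Theta_{\eps xx}\|_{L^\infty(\Om)}+\|\Theta_{\eps x}\|_{L^\infty(\Om)}^2\le c\,y_\eps+c$ via Lemma \ref{lem2.5}, the embedding $W_0^{1,2}(\Om)\hookrightarrow L^\infty(\Om)$ applied to both $v_{\eps x}$ and $u_{\eps x}$, and Lemma \ref{lem2.6a}, exactly as in your Step 2.

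Your caveat about the dissipation coefficient is justified. The computation leaves $\eps\io v_{\eps xxx}^2$ only with a factor strictly below $k_1/\tau$ (for instance $\frac{k_1}{2\tau}$). The literal $k_1$ is therefore not attainable in general when $\tau>1$. This is a slip in the statement, which the paper's deferral to \cite{CW26} also passes over, and it is harmless because Lemma \ref{lem2.6} needs only some positive multiple.
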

\proof
Differentiating $y_\eps$ along \eqref{approx} and testing as in \cite[Lemma 3.8]{CW26} produces several terms, all of which are controlled exactly as there, under the substitution described in the Introduction, except for one step: the control of $\Theta_{\eps xx}$ and $\Theta_{\eps x}$. By a Sobolev inequality, we may fix $c_2>0$ such that
$$\|\varphi\|_{L^\infty(\Om)}\le c_2\|\varphi_x\|_{L^2(\Om)}\qquad\text{for all }\varphi\in W_0^{1,2}(\Om),$$
so that, applying this to $v_{\eps x}$ and $u_{\eps x}$ and invoking Lemma \ref{lem2.5}, with $k_{13}$ as provided there, we obtain
\bea{3.45}
\|\Theta_{\eps xx}\|_{L^\infty(\Om)}+\|\Theta_{\eps x}\|_{L^\infty(\Om)}^2\le k_{13}c_1^2\io v_{\eps xx}^2+k_{13}c_{1}^2\io u_{\eps xx}^2+k_{13}\le c_2 y_\eps(t)+c_2
\eea 
for all $t\in(0,T)$ with $c_2:=\max\{k_{13}k_{14}c_1^2,k_{13}\}$, with $k_{14}$ as in Lemma \ref{lem2.6a}. This bound takes the place of \cite[(3.45)]{CW26} in the proof of \cite[Lemma 3.8]{CW26}; the only difference is the additional term $u_{\eps x}^2$, which is controlled by the same embedding and by Lemma \ref{lem2.6a} in exactly the same way as $v_{\eps x}^2$. With this modification in place, the remaining computation of \cite[Lemma 3.8]{CW26} applies unchanged and yields \eqref{re2.55}.
\qed
\begin{lem}\label{lem2.6}
Let $M$ and $\eps_\star$ be as in Lemma \ref{lem2.4}. Then there exist $\eps_{\star\star}\in(0,\eps_\star)$, $k_{16}>0$ and $T_0>0$ such that whenever $\eps\in(0,\eps_{\star\star})$ and $T\in(0,\tme)$ are such that \eqref{3.26} holds, we have
\bea{3.58}
\io w_{\eps x}^2(\cdot,t)\le k_{16},\quad\io v_{\eps xx}^2(\cdot,t)\le k_{16},\quad\text{and}\quad\io u_{\eps xx}^2(\cdot,t)\le k_{16}\qquad\text{for all }t\in(0,T)\cap(0,T_0)\ \
\eea
as well as
\bea{3.61}
\eps\int_0^t\io v_{\eps xxx}^2\le k_{16}\qquad\text{for all }t\in(0,T)\cap(0,T_0).
\eea
\end{lem}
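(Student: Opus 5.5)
The plan is an ODE comparison for the quasi-energy $y_\eps$ from \eqref{re2.53}. We bound $y_\eps(0)$ uniformly in $\eps$, compare with the explicit blow-up solution of $z'=k_{15}z^2+k_{15}z$, and read off \eqref{3.58} and \eqref{3.61} through the coercivity estimate \eqref{re2.54}.

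\textbf{Step 1: uniform bound on $y_\eps(0)$.} Use \eqref{2.35}, \eqref{2.36} and the convergences \eqref{2.31}. These show that
\[
\io w_{0\eps x}^2,\qquad \io v_{0\eps xx}^2,\qquad \io u_{0\eps xx}^2,\qquad \eps\io u_{0\eps xxx}^2
\]
stay bounded for all small $\eps$; the last one tends to zero by \eqref{2.36}. By \eqref{3.25}, $\Theta_{0\eps}$ takes values in $[0,M/12]$. On this range $D_\eps$ and $C_\eps$ are bounded independently of $\eps$, by local uniform convergence. Hence there are $\eps_{\star\star}\in(0,\eps_\star)$ and $c_1>0$ with $y_\eps(0)\le c_1$ for all $\eps\in(0,\eps_{\star\star})$. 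If necessary, shrink $\eps_{\star\star}$ so that \eqref{2.36} gives $\eps\io u_{0\eps xxx}^2\le 1$.

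\textbf{Step 2: comparison on a fixed interval.} Let $T\in(0,\tme)$ satisfy \eqref{3.26}. Since $y_\eps\ge 0$ by \eqref{re2.54}, Lemma \ref{lem2.6b} gives $y_\eps'\le k_{15}(y_\eps^2+y_\eps)$ on $(0,T)$. The function $y_\eps$ is continuous on $[0,T)$, because the approximate solutions lie in $C^{2,1}$ up to $t=0$ and $u_{\eps xxx}$ is continuous there. Let $z$ solve $z'=k_{15}(z^2+z)$ with $z(0)=c_1$. Choose $T_0>0$ so small that $z\le 2c_1$ on $[0,T_0]$; solving the separable equation explicitly gives such a $T_0$, depending only on $c_1$ and $k_{15}$. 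The standard ODE comparison argument then yields $y_\eps\le 2c_1$ on $(0,T)\cap(0,T_0)$. With \eqref{re2.54} this gives \eqref{3.58} for any $k_{16}\ge 2c_1k_{14}$.

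\textbf{Step 3: the dissipation integral \eqref{3.61}.} Integrate \eqref{re2.55} over $(0,t)$ and drop $y_\eps(t)\ge0$:
\[
k_1\eps\int_0^t\io v_{\eps xxx}^2\le y_\eps(0)+k_{15}T_0(4c_1^2+2c_1).
\]
This is bounded independently of $\eps$, so enlarging $k_{16}$ accordingly gives \eqref{3.61}.

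The only step needing care is Step 1. The $\eps$-weighted term $\frac{\eps}{\tau}\io C_\eps(\Theta_{0\eps})u_{0\eps xxx}^2$ in $y_\eps(0)$ is exactly where \eqref{2.36} enters. We must also check that $C_\eps$ and $D_\eps$ are uniformly bounded on the range of $\Theta_{0\eps}$ before \eqref{3.26} is known at $t=0$. This follows from \eqref{3.25} together with the uniform convergence in \eqref{2.31} on compact sets.
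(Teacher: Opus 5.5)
Your proposal is correct and follows the paper's own argument. The paper, which only sketches this step and refers to \cite[Lemma 3.10]{CW26}, likewise bounds $y_\eps(0)$ uniformly via \eqref{2.35} and \eqref{2.36}, compares with the Bernoulli-type inequality from Lemma \ref{lem2.6b} on an $\eps$-independent interval, and reads off \eqref{3.58} and \eqref{3.61} through \eqref{re2.54} and an integration of \eqref{re2.55}. One minor correction: continuity of $t\mapsto\eps\io C_\eps(\Theta_\eps)u_{\eps xxx}^2$ at $t=0$ does not follow from the $C^{2,1}$ regularity in \eqref{2.37}; it needs a short parabolic regularity argument for $u_{\eps x}$, which solves $u_{\eps xt}=\eps u_{\eps xxx}+v_{\eps x}$ under Dirichlet conditions with initial datum $u_{0\eps x}\in C_0^\infty(\Om)$.
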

\proof
By \eqref{2.35} and \eqref{2.36}, $y_{\eps}(0)$ remains bounded, uniformly in $\eps$, as $\eps\searrow 0$. Since the differential inequality from Lemma \ref{lem2.6b} is of Bernoulli type, a standard ODE comparison argument shows that $y_\eps$ itself remains bounded on some short, $\eps-$independent time interval $(0,T_0)$; via the coercivity bound from Lemma \ref{lem2.6a}, this yields \eqref{3.58} and \eqref{3.61}. This is precisely the argument underlying \cite[Lemma 3.10]{CW26}.
\qed
Complementing the bounds obtained so far, we now also derive one for $u_{\eps xxx}$, $g_{\eps x}$ and $g_{\eps xx}$.
\begin{lem}\label{lem2.8}
Let $M$, $\eps_{\star\star}$ and $T_0$ be as in Lemma \ref{lem2.4} and Lemma \ref{lem2.6}. Then there exists $k_{17}>0$ such that whenever $\eps\in(0,\eps_{\star\star})$ and $T\in(0,\tme)$ are such that \eqref{3.26} holds, we have
\bea{2.55.00}
\io g_{\eps x}^2(\cdot,t)\le k_{17}\qquad\text{for all }t\in(0,T)\cap(0,T_0),
\eea
and
\bea{2.55.0}
\eps\int_0^t\io u_{\eps xxx}^2\le k_{17}\qquad\text{for all }t\in(0,T)\cap(0,T_0),
\eea
as well as
\bea{2.55}
\eps\int_0^t\io g_{\eps xx}^2\le k_{17}\qquad\text{for all }t\in(0,T)\cap(0,T_0).
\eea
\end{lem}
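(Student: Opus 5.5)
We derive the three bounds in the order \eqref{2.55.0}, \eqref{2.55.00}, \eqref{2.55}. The bound on $g_{\eps x}$ needs $u_{\eps xxx}$ controlled in an integrated sense, and the bound on $g_{\eps xx}$ comes out of the same testing procedure.

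\emph{Step 1: bound on $u_{\eps xxx}$.} Differentiate the third equation in \eqref{approx} with respect to $x$ and test it against $-u_{\eps xxx}$. Because $u_{\eps x}=0$ and hence $u_{\eps xt}=0$ on $\partial\Om$, integration by parts gives
$$\frac{1}{2}\frac{d}{dt}\io u_{\eps xx}^2+\eps\io u_{\eps xxx}^2=-\io v_{\eps x}u_{\eps xxx}\le\frac{\eps}{2}\io u_{\eps xxx}^2+\frac{1}{2\eps}\io v_{\eps x}^2 .$$
This is not uniform in $\eps$, so we instead use $\io v_{\eps xx}u_{\eps xx}$, writing $-\io v_{\eps x}u_{\eps xxx}=\io v_{\eps xx}u_{\eps xx}$ (boundary terms vanish since $v_{\eps x}=0$ on $\partial\Om$). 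Lemma \ref{lem2.6} bounds this by $k_{16}$. Integrating over $(0,t)$ and using \eqref{2.35} for $\io u_{0\eps xx}^2$ yields
$$\eps\int_0^t\io u_{\eps xxx}^2\le\frac12\io u_{0\eps xx}^2+k_{16}T_0 ,$$
which is \eqref{2.55.0}. (Alternatively, the term $\frac{\eps}{\tau}\io C_\eps u_{\eps xxx}^2$ built into $y_\eps$ already gives a pointwise bound through \eqref{re2.54}.)

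\emph{Step 2: bound on $g_{\eps x}$ and $g_{\eps xx}$.} Test the fifth equation in \eqref{approx} against $-g_{\eps xx}$. The boundary term from $g_{\eps t}$ vanishes because $g_{\eps t}=0$ on $\partial\Om$, so
$$\frac{\tau}{2}\frac{d}{dt}\io g_{\eps x}^2+\eps\tau\io g_{\eps xx}^2+\io g_{\eps x}^2=-\io\big\{D_\eps(\Theta_\eps)v_{\eps x}+C_\eps(\Theta_\eps)u_{\eps x}\big\}g_{\eps xx}.$$
We cannot absorb the right-hand side with Young's inequality against $\eps\tau\io g_{\eps xx}^2$, since that would cost a factor $\eps^{-1}$. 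Instead we integrate by parts once more. No boundary term appears at this step, because $v_{\eps x}=u_{\eps x}=0$ on $\partial\Om$, and the right-hand side becomes
$$\io\big\{D_\eps v_{\eps xx}+D_\eps'\Theta_{\eps x}v_{\eps x}+C_\eps u_{\eps xx}+C_\eps'\Theta_{\eps x}u_{\eps x}\big\}g_{\eps x}.$$
To control it we use:
\begin{itemize}
\item Lemma \ref{lem2.4} for the coefficients $D_\eps,C_\eps,D_\eps',C_\eps'$;
\item Lemma \ref{lem2.6} for $\|v_{\eps xx}\|_{L^2(\Om)}$ and $\|u_{\eps xx}\|_{L^2(\Om)}$;
\item $W^{1,2}_0\hookrightarrow L^\infty$, which bounds $\|v_{\eps x}\|_{L^\infty(\Om)}$ and $\|u_{\eps x}\|_{L^\infty(\Om)}$;
\item Lemma \ref{lem2.5} together with \eqref{3.58}, which bounds $\|\Theta_{\eps x}\|_{L^\infty(\Om)}$ uniformly on $(0,T)\cap(0,T_0)$.
\end{itemize}
With these, the right-hand side is at most $c\,(1+\io g_{\eps x}^2)$. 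Grönwall's lemma on $(0,T_0)$ then gives \eqref{2.55.00}, using that $\io g_{0\eps x}^2$ is bounded by \eqref{2.35}. Integrating the same identity in time yields \eqref{2.55}.

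\emph{Main obstacle.} The delicate point is avoiding any $\eps^{-1}$ loss. The source term in the $g_\eps$ equation contains $v_{\eps x}$, whose second derivatives are only available with a weight $\eps$ in \eqref{3.61}. The fix is to move the derivative off $g_{\eps xx}$ by the integration by parts in Step 2, which relies on the vanishing Neumann traces of $u_\eps$ and $v_\eps$ and on the uniform $L^\infty$ bound for $\Theta_{\eps x}$ from Lemma \ref{lem2.5}.
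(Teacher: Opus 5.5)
Your proposal is correct and follows essentially the same route as the paper. The bound \eqref{2.55.0} comes from $\frac12\frac{d}{dt}\io u_{\eps xx}^2+\eps\io u_{\eps xxx}^2=\io u_{\eps xx}v_{\eps xx}$, and \eqref{2.55.00}, \eqref{2.55} come from testing the fifth equation against $-g_{\eps xx}$, integrating the source by parts via $u_{\eps x}=v_{\eps x}=0$ on $\partial\Om$, and using the uniform $L^\infty$ bounds on $u_{\eps x}$, $v_{\eps x}$, $\Theta_{\eps x}$. The differences are cosmetic: the paper absorbs the Young terms directly into the damping term $-\io g_{\eps x}^2$, so it needs no Gr\"onwall step, and your opening claim that the $g_{\eps x}$ bound requires control of $u_{\eps xxx}$ is unnecessary, since your Step 2 never uses it.
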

\proof
By the continuity of the embedding $W^{1,2}(\Om)\hookrightarrow L^\infty(\Om)$, we fix $c_1>0$ such that
$$\|\varphi\|_{L^\infty(\Om)}\le c_1\|\varphi_x\|_{L^2(\Om)}\qquad\text{for all }\varphi\in W_0^{1,2}(\Om),$$
and let $(k_i)_{i\in\{1,\dots,12\}}$ be as in Lemma \ref{lem2.4}, $k_{13}$ as in Lemma \ref{lem2.5} and $k_{16}$ as in Lemma \ref{lem2.6}. Applying this embedding to $v_{\eps x}$ and $u_{\eps x}$ and invoking \eqref{3.58}, we obtain
\bea{re2.62}
\|v_{\eps x}\|_{L^\infty(\Om)}\le c_1\sqrt{k_{16}},\qquad \|u_{\eps x}\|_{L^\infty(\Om)}\le c_1\sqrt{k_{16}}\qquad\text{for all }t\in(0,T)\cap(0,T_0)
\eea
and hence, by \eqref{3.29},
\bea{re2.63}
\|\Theta_{\eps x}\|_{L^\infty(\Om)}^2\le k_{13}\|v_{\eps x}\|_{L^\infty(\Om)}^2 +k_{13}\|u_{\eps x}\|_{L^\infty(\Om)}^2+k_{13}\le c_2:=k_{13}(2c_1^2k_{16}+1)
\eea
for all $t\in(0,T)\cap(0,T_0)$. Testing the fifth equation in \eqref{approx} against $-g_{\eps xx}$, integrating by parts and using \eqref{3.58}, \eqref{re2.62}, \eqref{re2.63}, as well as the boundary conditions $v_{\eps x}=u_{\eps x}=g_\eps=0$ on $\partial\Om$, we obtain, upon applying Young's inequality,
\bea{2.56}
\frac{\tau}{2}\frac{d}{dt}\io g_{\eps x}^2&=&-\eps\tau\io g_{\eps xx}^2+\io D_\eps'(\Theta_\eps)\Theta_{\eps x}v_{\eps x}g_{\eps x}+\io D_\eps(\Theta_\eps)v_{\eps xx}g_{\eps x}\nn\\
&&+\io C_\eps'(\Theta_\eps)\Theta_{\eps x}u_{\eps x}g_{\eps x}+\io C_\eps(\Theta_\eps)u_{\eps xx}g_{\eps x}-\io g_{\eps x}^2\nn\\
&\le & -\eps\tau\io g_{\eps xx}^2+k_7^2\io \Theta_{\eps x}^2v_{\eps x}^2+\frac{1}{4}\io g_{\eps x}^2+k_2^2\io v_{\eps xx}^2+\frac{1}{4}\io g_{\eps x}^2\nn\\
&&+k_8^2\io \Theta_{\eps x}^2u_{\eps x}^2+\frac{1}{4}\io g_{\eps x}^2+ k_4^2\io u_{\eps xx}^2+\frac{1}{4}\io g_{\eps x}^2-\io g_{\eps x}^2\nn\\
&\le& -\eps\tau\io g_{\eps xx}^2+c_3\qquad\text{for all }t\in(0,T)\cap(0,T_0)
\eea
with $c_3:=k_7^2k_{16}c_1^2c_2|\Om|+k_2^2k_{16}+k_8^2k_{16}c_1^2c_2|\Om|+k_4^2k_{16}$.
Analogously, testing the third equation in \eqref{approx}, we obtain
$$\frac{1}{2}\frac{d}{dt}\io u_{\eps xx}^2+\eps\io u_{\eps xxx}^2=\io u_{\eps xx}v_{\eps xx}\le\frac{1}{2}\io u_{\eps xx}^2+\frac{1}{2}\io v_{\eps xx}^2\le k_{16}\qquad\text{for all }t\in(0,T)\cap(0,T_0)$$
by Young's inequality and \eqref{3.58}. Integrating this and \eqref{2.56} over $(0,t)$, we obtain
\bea{nn}
\frac{\tau}{2}\io g_{\eps x}^2(\cdot,t)+\eps\tau\int_0^t\io g_{\eps xx}^2+\eps\int_0^t\io u_{\eps xxx}^2&\le& \frac{\tau}{2}\io g_{0\eps x}^2+\frac{1}{2}\io u_{0\eps xx}^2+(c_3+k_{16})t\nn\\
&\le& \frac{\tau}{2}\io g_{0\eps x}^2+\frac{1}{2}\io u_{0\eps xx}^2+(c_3+k_{16})T_0\nn
\eea
for all $t\in(0,T)\cap(0,T_0)$. Since $\eps\in(0,1)$, $(g_{0\eps})_{\eps\in(0,1)}$ is bounded in $W^{1,2}(\Om)$ and $(u_{0\eps})_{\eps\in(0,1)}$ is bounded in $W^{2,2}(\Om)$, we can find $k_{17}>0$ fulfilling \eqref{2.55.00}, \eqref{2.55.0} and \eqref{2.55}.
\qed
In preparation for the bootstrap argument establishing that \eqref{3.26} persists on an $\eps$- independent time interval, we collect in the following lemma the estimates on several auxiliary functions that will be needed there.
\begin{lem}\label{lem2.9}
Let $M$, $\eps_{\star\star}$ and $T_0$ be as in Lemma \ref{lem2.4} and Lemma \ref{lem2.6}. For $\eps\in(0,\eps_{\star\star})$, writing
$$a_\eps:=E_\eps(\Theta_\eps)v_{\eps x}+F_\eps(\Theta_\eps)h_\eps\qquad\text{in }\Om\times(0,\tme),$$
with $h_\eps$ as in \eqref{2.36.1}, and
\bea{2.57}
f_{1,\eps}:=a_\eps^2\qquad\text{in }\Om\times(0,\tme)
\eea
and
\bea{2.58}
f_{2,\eps}:=2a_\eps\Big\{E_\eps(\Theta_\eps)v_{\eps xt}+\frac{F_\eps(\Theta_\eps)}{\tau}\big(D_\eps(\Theta_\eps)v_{\eps xt}+C_\eps(\Theta_\eps)u_{\eps xt}-g_{\eps t}\big)\Big\}\quad\text{in }\Om\times(0,\tme)
\eea
as well as
\bea{2.59}
f_{3,\eps}:=2a_\eps\Big\{E_\eps'(\Theta_\eps)v_{\eps x}+F_\eps'(\Theta_\eps)h_\eps+\frac{F_\eps(\Theta_\eps)}{\tau}\big(D_\eps'(\Theta_\eps)v_{\eps x}+C_\eps'(\Theta_\eps)u_{\eps x}\big)\Big\}\quad\text{in }\Om\times(0,\tme),
\eea
so that $g_{\eps t}=\eps g_{\eps xx}+h_\eps$, $\Theta_{\eps t}=\Theta_{\eps xx}+f_{1,\eps}$ and $\Theta_{\eps tt}=(\Theta_{\eps t})_{xx}+f_{2,\eps}+f_{3,\eps}\Theta_{\eps t}$ in $\Om\times(0,\tme)$, there exists $k_{18}>0$ such that whenever $\eps\in(0,\eps_{\star\star})$ and $T\in(0,\tme)$ are such that \eqref{3.26} holds, 
\bea{2.60}
\|h_\eps\|_{L^\infty(\Om)}\le k_{18}\qquad\text{for all }t\in(0,T)\cap(0,T_0),
\eea
and
\bea{2.59.0}
\|a_\eps\|_{L^\infty(\Om)}\le k_{18}\qquad\text{for all }t\in(0,T)\cap(0,T_0),
\eea
and
\bea{2.61}
\|f_{1,\eps}\|_{L^1(\Om)}\le k_{18}\qquad\text{for all }t\in(0,T)\cap(0,T_0),
\eea
and
\bea{2.62}
\int_0^t\io f_{2,\eps}^2\le k_{18}\qquad\text{for all }t\in(0,T)\cap(0,T_0),
\eea
as well as
\bea{2.63}
\|f_{3,\eps}\|_{L^\infty(\Om)}\le k_{18}\qquad\text{for all }t\in(0,T)\cap(0,T_0).
\eea
\end{lem}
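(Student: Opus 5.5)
The plan is to get all five bounds from pointwise estimates. We combine the coefficient bounds of Lemma \ref{lem2.4}, the sup-norm bounds \eqref{re2.62} on $v_{\eps x}$ and $u_{\eps x}$, and the bound $\|g_\eps\|_{L^\infty(\Om)}\le M$ contained in \eqref{3.26}. For $f_{2,\eps}$ only space-time $L^2$ control is needed, and there we use the $\eps$-weighted dissipation bounds \eqref{3.61}, \eqref{2.55.0} and \eqref{2.55}.

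\textbf{Step 1: $h_\eps$, $a_\eps$ and $f_{1,\eps}$.} By \eqref{3.27} and \eqref{re2.62},
$$|h_\eps|\le \frac{1}{\tau}\big(k_2c_1\sqrt{k_{16}}+k_4c_1\sqrt{k_{16}}+M\big)\qquad\text{in }\Om\times\big((0,T)\cap(0,T_0)\big).$$
Consequently $|a_\eps|\le k_5c_1\sqrt{k_{16}}+k_6\|h_\eps\|_{L^\infty(\Om)}$ there, which gives \eqref{2.60} and \eqref{2.59.0}. For \eqref{2.61} we simply note that $\|f_{1,\eps}\|_{L^1(\Om)}\le|\Om|\,\|a_\eps\|_{L^\infty(\Om)}^2$.

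\textbf{Step 2: $f_{3,\eps}$.} Every factor inside the brace in \eqref{2.59} is a product of one of $E'_\eps,F'_\eps,D'_\eps,C'_\eps,F_\eps$ evaluated at $\Theta_\eps$, which is bounded by \eqref{3.27}--\eqref{3.28.1}, with one of $v_{\eps x},u_{\eps x},h_\eps$, which is bounded by \eqref{re2.62} and Step 1. Together with \eqref{2.59.0} this gives \eqref{2.63}.

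\textbf{Step 3: $f_{2,\eps}$.} Using \eqref{2.59.0} and \eqref{3.27}, we have pointwise
$$f_{2,\eps}^2\le c\big(v_{\eps xt}^2+u_{\eps xt}^2+g_{\eps t}^2\big)$$
for some $\eps$-independent $c>0$. So it suffices to bound the space-time $L^2$ norms of $v_{\eps xt}$, $u_{\eps xt}$ and $g_{\eps t}$ over $\Om\times(0,t)$ for $t<\min\{T,T_0\}$. We differentiate the second and third equations of \eqref{approx} in $x$ and use the fifth equation in the form $g_{\eps t}=\eps g_{\eps xx}+h_\eps$. This yields
$$v_{\eps xt}=\eps v_{\eps xxx}+w_{\eps x},\qquad u_{\eps xt}=\eps u_{\eps xxx}+v_{\eps x},\qquad g_{\eps t}=\eps g_{\eps xx}+h_\eps .$$
\begin{itemize}
\item The terms $w_{\eps x}$, $v_{\eps x}$ and $h_\eps$ are bounded in $L^2(\Om)$ uniformly in time, by \eqref{3.58}, \eqref{re2.62} and \eqref{2.60}. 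Their squares therefore integrate to at most a constant times $T_0$.
\item For the $\eps$-terms, $\eps<1$ gives, for instance,
$$\eps^2\int_0^t\io v_{\eps xxx}^2\le \eps\int_0^t\io v_{\eps xxx}^2\le k_{16}$$
by \eqref{3.61}. Likewise, \eqref{2.55.0} controls $\eps^2\int_0^t\io u_{\eps xxx}^2$ and \eqref{2.55} controls $\eps^2\int_0^t\io g_{\eps xx}^2$, both by $k_{17}$.
\end{itemize}
Summing these contributions gives \eqref{2.62}. Finally, $k_{18}$ is taken as the maximum of all constants obtained.

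The only step that is not routine is the control of $g_{\eps t}$ in Step 3. Since $g_\eps$ has no genuine diffusion, the term $\eps g_{\eps xx}$ can be handled only through the dissipative bound \eqref{2.55} from Lemma \ref{lem2.8}. The extra factor $\eps$ is then exactly what makes it uniformly bounded. The identities $g_{\eps t}=\eps g_{\eps xx}+h_\eps$ and $\Theta_{\eps tt}=(\Theta_{\eps t})_{xx}+f_{2,\eps}+f_{3,\eps}\Theta_{\eps t}$ asserted in the statement are verified directly. The first is just \eqref{approx} combined with \eqref{2.36.1}. The second follows by differentiating the fourth equation of \eqref{approx} in time, using the chain rule on $a_\eps^2$: the terms containing $\Theta_{\eps t}$ are collected in $f_{3,\eps}$, and the remaining time derivatives of $v_{\eps x}$, $u_{\eps x}$ and $g_\eps$ are collected in $f_{2,\eps}$.
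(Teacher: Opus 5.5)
Your proposal is correct and follows essentially the same route as the paper. It bounds $h_\eps$, $a_\eps$, $f_{1,\eps}$ and $f_{3,\eps}$ pointwise via Lemma~\ref{lem2.4}, \eqref{re2.62} and $\|g_\eps\|_{L^\infty(\Om)}\le M$, and it handles $f_{2,\eps}$ through the splittings $v_{\eps xt}=\eps v_{\eps xxx}+w_{\eps x}$, $u_{\eps xt}=\eps u_{\eps xxx}+v_{\eps x}$ and $g_{\eps t}=\eps g_{\eps xx}+h_\eps$, combined with \eqref{3.58}, \eqref{3.61}, \eqref{2.55.0}, \eqref{2.55} and $\eps^2\le\eps$.
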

\proof
By the continuity of the embedding $W^{1,2}(\Om)\hookrightarrow L^\infty(\Om)$ we can pick some $c_1>0$ fulfilling
$$\|\varphi\|_{L^\infty(\Om)}\le c_1\|\varphi_x\|_{L^2(\Om)}\qquad\text{for all }\varphi\in W_0^{1,2}(\Om),$$
and let $(k_i)_{i\in\{1,\dots,17\}}$ as in Lemma \ref{lem2.4}-\ref{lem2.8}. Then, by \eqref{3.26}, \eqref{3.27} and \eqref{3.58},
\bea{re2.73}
\|h_\eps\|_{L^\infty(\Om)}&\le&\frac{k_2}{\tau}\|v_{\eps x}\|_{L^\infty(\Om)}+\frac{k_4}{\tau}\|u_{\eps x}\|_{L^\infty(\Om)}+\frac{1}{\tau}\|g_\eps\|_{L^\infty(\Om)}\nn\\
&\le& c_2:=\frac{c_1\sqrt{k_{16}}}{\tau}\cdot(k_2+k_4)+\frac{M}{\tau}\qquad\text{for all }t\in(0,T)\cap(0,T_0).
\eea
Consequently, by \eqref{3.27} and \eqref{3.58},
\bea{re2.74}
\|a_\eps\|_{L^\infty(\Om)}\le k_5\|v_{\eps x}\|_{L^\infty(\Om)}+c_2k_6\le c_3:=c_1k_5\sqrt{k_{16}}+c_2k_6\qquad\text{for all }t\in(0,T)\cap(0,T_0),
\eea
so that
\bea{nn}
\|f_{1,\eps}\|_{L^1(\Om)}=\io a_\eps^2\le c_4:=c_3^2|\Om|\qquad\text{for all }t\in(0,T)\cap(0,T_0).
\eea
Moreover, by \eqref{3.27}-\eqref{3.28.1}, \eqref{3.58}, \eqref{re2.73} and \eqref{re2.74}
\bea{nn}
\|f_{3,\eps}\|_{L^\infty(\Om)}&\le& 2c_3\Big\{k_{11}\|v_{\eps x}\|_{L^\infty(\Om)}+c_2k_{12}+\frac{k_6k_7}{\tau}\|v_{\eps x}\|_{L^\infty(\Om)}+\frac{k_6k_8}{\tau}\|u_{\eps x}\|_{L^\infty(\Om)}\Big\}\nn\\
&\le&c_5:=2c_3\Big\{c_1k_{11}\sqrt{k_{16}}+c_2k_{12}+\frac{c_1k_6k_7\sqrt{k_{16}}}{\tau}+\frac{c_1k_6k_8\sqrt{k_{16}}}{\tau}\Big\}
\eea
for all $t\in(0,T)\cap(0,T_0)$. Finally, with $c_6:=16c_3^2k_5^2+\frac{16c_3^2k_6^2}{\tau^2}\cdot\max\big\{k_2^2,k_4^2,1\big\}$, we obtain from \eqref{3.27}, \eqref{3.58}, \eqref{2.55} and \eqref{re2.74} that
\bea{nn}
\int_0^t\io f_{2,\eps}^2(\cdot,s)ds&\le&16c_3^2k_5^2\int_0^t \|v_{\eps xt}(\cdot,s)\|_{L^2(\Om)}^2ds\nn\\
&&+\frac{16c_3^2k_6^2}{\tau^2}\int_0^t\Big\{k_2^2\|v_{\eps xt}(\cdot,s)\|_{L^2(\Om)}^2+k_4^2\|u_{\eps xt}(\cdot,s)\|_{L^2(\Om)}^2+\|g_{\eps t}(\cdot,s)\|_{L^2(\Om)}^2\Big\}ds\nn\\
&\le& c_6\int_0^t\|v_{\eps xt}(\cdot,s)\|_{L^2(\Om)}^2ds+c_6\int_0^t\|u_{\eps xt}(\cdot,s)\|_{L^2(\Om)}^2ds+c_6\int_0^t\|g_{\eps t}(\cdot,s)\|_{L^2(\Om)}^2ds\nn\\
&=&c_6\int_0^t\|\eps v_{\eps xxx}(\cdot,s)+w_{\eps x}(\cdot,s)\|_{L^2(\Om)}^2ds+c_6\int_0^t\|\eps u_{\eps xxx}(\cdot,s)+v_{\eps x}(\cdot,s)\|_{L^2(\Om)}^2ds\nn\\
&&+c_6\int_0^t\|\eps g_{\eps xx}(\cdot,s)+h_\eps(\cdot,s)\|_{L^2(\Om)}^2ds\nn\\
&\le& 2c_6\int_0^t \eps^2\|v_{\eps xxx}(\cdot,s)\|_{L^2(\Om)}^2+\|w_{\eps x}(\cdot,s)\|_{L^2(\Om)}^2ds\nn\\
&&+2c_6\int_0^t \eps^2\|u_{\eps xxx}(\cdot,s)\|_{L^2(\Om)}^2+\|v_{\eps x}(\cdot,s)\|_{L^2(\Om)}^2ds\nn\\
&&+2c_6\int_0^t \eps^2\|g_{\eps xx}(\cdot,s)\|_{L^2(\Om)}^2+\|h_{\eps}(\cdot,s)\|_{L^2(\Om)}^2ds\nn\\
&\le& c_7:=2c_6\cdot\{\eps k_{16}+k_{16}T_0+2\eps k_{17}+c_1^2k_{16}|\Om|T_0+c_2^2|\Om|T_0\}
\eea
for all $t\in(0,T)\cap(0,T_0)$, because $\eps\le 1$. So \eqref{2.60}-\eqref{2.63} follow with $k_{18}:=\max\{c_2,c_4,c_5,c_7\}$.
\qed
Equipped with these bounds, we now turn to showing that \eqref{3.26} itself holds on a time interval independent of $\eps$.
\begin{lem}\label{lem2.7}
Let $M$, $\eps_{\star\star}$ and $T_0$ be as provided by Lemma \ref{lem2.4} and Lemma \ref{lem2.6}. Then one can find $T_\star\in(0,T_0)$ such that
\bea{3.67}
\|\Theta_\eps(\cdot,t)\|_{L^\infty(\Om)}+\|\Theta_{\eps t}(\cdot,t)\|_{L^\infty(\Om)}+\|g_\eps(\cdot,t)\|_{L^\infty(\Om)}\le M,
\eea
for all $t\in(0,T_\star)\cap(0,\tme)$ and $\eps\in(0,\eps_{\star\star})$.
\end{lem}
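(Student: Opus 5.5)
This is a continuity (bootstrap) argument. For $\eps\in(0,\eps_{\star\star})$ let
$$T_\eps:=\sup\Big\{T\in(0,\tme)\ \Big|\ \|\Theta_\eps(\cdot,t)\|_{L^\infty(\Om)}+\|\Theta_{\eps t}(\cdot,t)\|_{L^\infty(\Om)}+\|g_\eps(\cdot,t)\|_{L^\infty(\Om)}\le M\text{ for all }t\in(0,T)\Big\}.$$
By \eqref{3.25}, the continuity of $\Theta_\eps$, $\Theta_{\eps t}$, $g_\eps$ up to $t=0$ (from \eqref{2.37}), and the identity $\Theta_{\eps t}(\cdot,0)=\Theta_{0\eps t}$, we have $T_\eps>0$. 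The aim is to find $T_\star\in(0,T_0)$, independent of $\eps$, such that on $(0,T_\eps)\cap(0,T_\star)$ each of the three quantities stays at most $\frac{M}{4}$. Continuity then forces $T_\eps\ge\min\{T_\star,\tme\}$. On $(0,T_\eps)\cap(0,T_0)$ we may use Lemmas \ref{lem2.4}--\ref{lem2.9} freely.

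\textbf{Bound on $g_\eps$.} Use the Duhamel representation of $g_\eps$ through the Dirichlet semigroup, as in $\Phi_5$. Since $\tau g_{\eps t}=\eps\tau g_{\eps xx}+\tau h_\eps$, \eqref{2.45.2} gives
$$\|g_\eps\|_{L^\infty(\Om)}\le\|g_{0\eps}\|_{L^\infty(\Om)}+\int_0^t\|h_\eps\|_{L^\infty(\Om)}\le \tfrac{M}{12}+k_{18}t$$
by \eqref{2.60}. (Alternatively, one can avoid $h_\eps$ and use \eqref{re2.62} with the maximum principle.)

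\textbf{Bound on $\Theta_\eps$.} From $\Theta_{\eps t}=\Theta_{\eps xx}+f_{1,\eps}$, the Neumann semigroup, \eqref{2.45}, \eqref{2.42} and \eqref{2.61} give
$$\|\Theta_\eps\|_{L^\infty(\Om)}\le\tfrac{M}{12}+2c_2k_{18}t^{1/2}.$$
Alternatively, $f_{1,\eps}\le k_{18}^2$ pointwise by \eqref{2.59.0}, which gives the bound $\tfrac{M}{12}+k_{18}^2t$.

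\textbf{Bound on $\Theta_{\eps t}$ (main step).} The function $\varphi:=\Theta_{\eps t}$ satisfies $\varphi_t=\varphi_{xx}+f_{2,\eps}+f_{3,\eps}\varphi$ with $\varphi_x=0$ on $\partial\Om$ and $\varphi(\cdot,0)=\Theta_{0\eps t}$. The variation-of-constants formula gives
$$\|\varphi(\cdot,t)\|_{L^\infty(\Om)}\le \tfrac{M}{12}+\int_0^t\|e^{(t-s)\Del}f_{2,\eps}\|_{L^\infty(\Om)}ds+k_{18}\int_0^t\|\varphi\|_{L^\infty(\Om)}ds.$$
The only available control on $f_{2,\eps}$ is the space-time $L^2$ bound \eqref{2.62}, so this term needs care. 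I would use the one-dimensional $L^2\to L^\infty$ smoothing estimate $\|e^{\sigma\Del}\psi\|_{L^\infty(\Om)}\le c(1+\sigma^{-1/4})\|\psi\|_{L^2(\Om)}$, which follows from \eqref{2.42} by interpolation or from the same source \cite{Winkler10}. Then Cauchy--Schwarz in time gives
$$\int_0^t\|e^{(t-s)\Del}f_{2,\eps}\|_{L^\infty(\Om)}ds\le c\Big(\int_0^t(1+(t-s)^{-1/4})^2ds\Big)^{1/2}\Big(\int_0^t\|f_{2,\eps}\|_{L^2(\Om)}^2ds\Big)^{1/2}\le c'\sqrt{k_{18}}\,(t^{1/4}+t^{1/2}).$$
The singularity is square integrable because its exponent is $\frac{1}{4}<\frac{1}{2}$. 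A Gronwall argument then yields
$$\|\Theta_{\eps t}\|_{L^\infty(\Om)}\le\big(\tfrac{M}{12}+c'\sqrt{k_{18}}(t^{1/4}+t^{1/2})\big)e^{k_{18}t}.$$
This is the step I expect to be the obstacle. The rest of the proof depends on checking that \eqref{2.62} is genuinely uniform in $\eps$; it is, through the $\eps$-weighted bounds \eqref{3.61}, \eqref{2.55.0} and \eqref{2.55}.

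\textbf{Conclusion.} Choose $T_\star\in(0,T_0)$ so small that each of the three right-hand sides is at most $\frac{M}{4}$ for $t\le T_\star$. Their sum is then at most $\frac{3M}{4}<M$ on $(0,T_\eps)\cap(0,T_\star)$. If $T_\eps<\min\{T_\star,\tme\}$, continuity of the three quantities would let the bound $\le M$ persist slightly beyond $T_\eps$, contradicting the definition of $T_\eps$ as a supremum. Hence \eqref{3.67} holds on $(0,T_\star)\cap(0,\tme)$.
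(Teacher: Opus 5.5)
Your proposal is correct and follows the paper's proof essentially step by step. It uses the same supremum $T_\eps$ and the same three Duhamel estimates: for $\Theta_\eps$ via $L^1\to L^\infty$ smoothing and \eqref{2.61}, for $\Theta_{\eps t}$ via $L^2\to L^\infty$ smoothing with exponent $\frac14$, Cauchy--Schwarz in time and \eqref{2.62}, and for $g_\eps$ via the nonexpansive Dirichlet semigroup and \eqref{2.60}. The only minor difference is that you absorb the $f_{3,\eps}\Theta_{\eps t}$ term by Gronwall, whereas the paper simply bounds it by $k_{18}Mt$ using the bootstrap hypothesis $\|\Theta_{\eps t}\|_{L^\infty(\Om)}\le M$ on $(0,T_\eps)$.
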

\proof
We recall once more the smoothing properties of the Neumann heat semigroup $(e^{t\Del})_{t\ge 0}$ on $\Om$ (\cite{Winkler10}): there exist $c_1>0$ and $c_2>0$ such that whenever $\varphi\in C^0(\ov\Om)$,
\bea{3.68}
\|e^{t\Del}\varphi\|_{L^\infty(\Om)}\le c_1t^{-\frac{1}{2}}\|\varphi\|_{L^1(\Om)}\qquad\text{for all }t\in(0,T_0)
\eea
and
\bea{3.69}
\|e^{t\Del}\varphi\|_{L^\infty(\Om)}\le c_2 t^{-\frac{1}{4}}\|\varphi\|_{L^2(\Om)}\qquad\text{for all }t\in(0,T_0).
\eea
Taking $h_\eps$, $f_{1,\eps}$, $f_{2,\eps}$, $f_{3,\eps}$ and $k_{18}$ as in Lemma \ref{lem2.9}, we then let $T_\star\in(0,T_0)$ be small enough such that
\bea{3.71}
2c_1k_{18}T_\star^\frac{1}{2}\le \frac{M}{12},\qquad \sqrt{2k_{18}}c_2T_\star^\frac{1}{4}+k_{18}MT_\star\le\frac{M}{12}\qquad\text{and}\qquad k_{18}T_\star\le\frac{M}{12}.
\eea
Due to \eqref{3.25} and the continuity of $\Theta_\eps$, $\Theta_{\eps t}$ and $g_\eps$ in $\ov\Om\times[0,\tme)$, for fixed $\eps\in(0,\eps_{\star\star})$,
\bea{3.73}
T_\eps:=\sup\Big\{T'\in(0,\tme)\ \Big|\ \|\Theta_\eps(\cdot,t)\|_{L^\infty(\Om)}+\|\Theta_{\eps t}(\cdot,t)\|_{L^\infty(\Om)}+\|g_\eps(\cdot,t)\|_{L^\infty(\Om)}\nn\\
\le M\text{ for all }t\in(0,T')\Big\}\hspace{70mm}
\eea
is a well-defined element of $(0,\tme]\subset(0,\infty]$, and actually satisfies
$$T_\eps\ge T_\star.$$
To see this, we observe that according to a Duhamel representation associated with the fourth sub-problem in \eqref{approx}, since $e^{t\Del}$ is nonexpansive on $L^\infty(\Om)$ according to the maximum principle, Lemma \ref{lem2.4}, \eqref{3.68} and \eqref{3.71} imply that
\bea{3.81}
\|\Theta_\eps(\cdot,t)\|_{L^\infty(\Om)}&=&\bigg\|e^{t\Del}\Theta_{0\eps}+\int_0^t e^{(t-s)\Del}f_{1,\eps}(\cdot,s)ds\bigg\|_{L^\infty(\Om)}\nn\\
&\le &\|\Theta_{0\eps}\|_{L^\infty(\Om)}+c_1\int_0^t (t-s)^{-\frac{1}{2}}\|f_{1,\eps}(\cdot,s)\|_{L^1(\Om)} ds\nn\\
&\le& \frac{M}{12}+2c_1k_{18}t^{\frac{1}{2}}\nn\\
&\le&\frac{M}{6}\qquad\text{for all }t\in(0,T_\eps).
\eea
Similarly, using that $\Theta_{\eps tt}=(\Theta_{\eps t})_{xx}+f_{2,\eps}+f_{3,\eps}\Theta_{\eps t}$ in $\Om\times(0,\tme)$ and $(\Theta_{\eps t})_x=0$ on $\partial\Om\times(0,\tme)$ by \eqref{approx}, \eqref{2.58} and \eqref{2.59}, on employing Cauchy-Schwarz inequality, Lemma \ref{lem2.4}, \eqref{3.69} and \eqref{3.71} we see that
\bea{2.77}
\|\Theta_{\eps t}(\cdot,t)\|_{L^\infty(\Om)}&=&\bigg\|e^{t\Del}\Theta_{0\eps t}+\int_0^t e^{(t-s)\Del}f_{2,\eps}(\cdot,s)ds+\int_0^t e^{(t-s)\Del}\big\{f_{3,\eps}(\cdot,s)\Theta_{\eps t}(\cdot,s)\big\}ds\bigg\|_{L^\infty(\Om)}\nn\\
&\le&\|\Theta_{0\eps t}\|_{L^\infty(\Om)}+c_2\int_0^t(t-s)^{-\frac{1}{4}}\|f_{2,\eps}(\cdot,s)\|_{L^2(\Om)}ds\nn\\
&&+\int_0^t\|f_{3,\eps}(\cdot,s)\|_{L^\infty(\Om)}\|\Theta_{\eps t}(\cdot,s)\|_{L^\infty(\Om)}ds\nn\\
&\le&\|\Theta_{0\eps t}\|_{L^\infty(\Om)}+c_2\cdot\bigg\{\int_0^t(t-s)^{-\frac{1}{2}}ds\bigg\}^\frac{1}{2}\cdot\bigg\{\int_0^t\|f_{2,\eps}(\cdot,s)\|_{L^2(\Om)}^2ds\bigg\}^\frac{1}{2}\nn\\
&&+\int_0^t\|f_{3,\eps}(\cdot,s)\|_{L^\infty(\Om)}\|\Theta_{\eps t}(\cdot,s)\|_{L^\infty(\Om)}ds\nn\\
&\le&\frac{M}{12}+\sqrt{2k_{18}}c_2t^\frac{1}{4}+ k_{18}Mt\nn\\
&\le&\frac{M}{6}\qquad\text{for all }t\in(0,T_\eps).
\eea
For $g_\eps$, since $e^{\eps t\Del_D}$, the semigroup generated by the Dirichlet realization $\Del_D$ of the Laplacian, is likewise nonexpansive on $L^\infty(\Om)$ according to the maximum principle and \eqref{3.71} imply that
\bea{2.78}
\|g_\eps(\cdot,t)\|_{L^\infty(\Om)}&=&\bigg\|e^{\eps t\Del_D}g_{0\eps}+\int_0^t e^{\eps(t-s)\Del_D}h_\eps(\cdot,s)ds\bigg\|_{L^\infty(\Om)}\nn\\
&\le& \|g_{0\eps}\|_{L^\infty(\Om)}+\int_0^t\|h_\eps(\cdot,s)\|_{L^\infty(\Om)}ds\nn\\
&\le&\frac{M}{12}+k_{18}t\nn\\
&\le&\frac{M}{6}\qquad\text{for all }t\in(0,T_\eps).
\eea
From \eqref{3.81}, \eqref{2.77} and \eqref{2.78} we thus infer that
$$\|\Theta_\eps(\cdot,t)\|_{L^\infty(\Om)}+\|\Theta_{\eps t}(\cdot,t)\|_{L^\infty(\Om)}+\|g_\eps(\cdot,t)\|_{L^\infty(\Om)}\le\frac{M}{2}\qquad\text{for all }t\in(0,T_\eps),$$
which by continuity of $\Theta_\eps$, $\Theta_{\eps t}$ and $g_\eps$ is compatible with \eqref{3.73} only if indeed $T_\eps\ge T_\star$.
\qed
Consequently, our approximate solutions persists at least up to the time obtained above, satisfying estimates that follow directly from Lemma \ref{lem2.6} and Lemma \ref{lem2.7}.
\begin{cor}\label{cor2.10}
Let $\eps_{\star\star}$ and $T_\star$ be as in Lemma \ref{lem2.6} and Lemma \ref{lem2.7}. Then $\tme\ge T_\star$ for all $\eps\in(0,\eps_{\star\star})$, and there exists $K>0$ such that
\bea{3.83}
\|w_\eps(\cdot,t)\|_{W^{1,2}(\Om)}\le K\qquad\text{for all }t\in(0,T_\star)\text{ and }\eps\in(0,\eps_{\star\star}),
\eea
that
\bea{3.84}
\|v_\eps(\cdot,t)\|_{W^{2,2}(\Om)}\le K\qquad\text{for all }t\in(0,T_\star)\text{ and }\eps\in(0,\eps_{\star\star}),
\eea
that
\bea{3.85}
\|u_\eps(\cdot,t)\|_{W^{2,2}(\Om)}\le K\qquad\text{for all }t\in(0,T_\star)\text{ and }\eps\in(0,\eps_{\star\star}),
\eea
that
\bea{3.86}
\|\Theta_{\eps t}(\cdot,t)\|_{L^\infty(\Om)}\le K\qquad\text{for all }t\in(0,T_\star)\text{ and }\eps\in(0,\eps_{\star\star}),
\eea
that
\bea{3.87}
\|\Theta_\eps(\cdot,t)\|_{W^{2,\infty}(\Om)}\le K\qquad\text{for all }t\in(0,T_\star)\text{ and }\eps\in(0,\eps_{\star\star}),
\eea
and that
\bea{2.83}
\|g_{\eps}(\cdot,t)\|_{W^{1,2}(\Om)}\le K\qquad\text{for all }t\in(0,T_\star)\text{ and }\eps\in(0,\eps_{\star\star}).
\eea
\end{cor}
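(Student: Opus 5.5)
The plan is to combine Lemma \ref{lem2.7} with the extensibility criterion \eqref{2.39} to get $\tme\ge T_\star$, and then to collect the bounds from Lemmas \ref{lem2.5}, \ref{lem2.6} and \ref{lem2.8}.

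\textbf{Step 1: $\tme\ge T_\star$.} Fix $\eps\in(0,\eps_{\star\star})$ and suppose, for contradiction, that $\tme<T_\star$. Since $T_\star<T_0$, Lemma \ref{lem2.7} shows that \eqref{3.26} holds with $T:=\tme$; strictly, one applies it with every $T<\tme$ and lets $T\nearrow\tme$. Hence Lemma \ref{lem2.6} and Lemma \ref{lem2.8} apply on all of $(0,\tme)$. From \eqref{3.58} and the mean-zero property \eqref{2.40}, the Poincaré inequality gives uniform $W^{2,2}$-bounds for $u_\eps$ and $v_\eps$, and a uniform $W^{1,2}$-bound for $w_\eps$. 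Since $\Om$ is one-dimensional, the embedding $W^{1,2}(\Om)\hookrightarrow L^\infty(\Om)$ then bounds $\|w_\eps\|_{L^\infty(\Om)}$. Together with \eqref{3.67}, which bounds $\Theta_\eps$ and $g_\eps$ in $L^\infty(\Om)$, every quantity in \eqref{2.39} stays bounded as $t\nearrow\tme$, a contradiction. So $\tme\ge T_\star$, and consequently \eqref{3.26} holds with $T=T_\star$ for every $\eps\in(0,\eps_{\star\star})$.

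\textbf{Step 2: the uniform bounds.} With $T=T_\star$ the earlier lemmas apply with $\eps$-independent constants.
\begin{itemize}
\item \eqref{3.83}--\eqref{3.85} follow from \eqref{3.58} together with \eqref{2.40} and Poincaré's inequality, which controls the lower-order norms by $\|w_{\eps x}\|_{L^2}$, $\|v_{\eps xx}\|_{L^2}$ and $\|u_{\eps xx}\|_{L^2}$. For $u_\eps$ and $v_\eps$ one first uses the Neumann condition to pass from the second derivative to the first via Poincaré for $u_{\eps x}$, which vanishes on $\partial\Om$.
\item \eqref{3.86} is immediate from \eqref{3.67}.
\item For \eqref{3.87}, $\|\Theta_\eps\|_{L^\infty(\Om)}\le M$ by \eqref{3.67}. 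The bounds on $\Theta_{\eps x}$ and $\Theta_{\eps xx}$ come from Lemma \ref{lem2.5}, once $\|v_{\eps x}\|_{L^\infty(\Om)}$ and $\|u_{\eps x}\|_{L^\infty(\Om)}$ are bounded as in \eqref{re2.62}.
\item For \eqref{2.83}, $\|g_\eps\|_{L^2(\Om)}\le|\Om|^{1/2}M$ by \eqref{3.67}, and $\io g_{\eps x}^2\le k_{17}$ by \eqref{2.55.00}.
\end{itemize}
Taking $K$ as the maximum of the resulting constants finishes the proof.

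\textbf{Main obstacle.} The only delicate point is Step 1: the bootstrap in Lemma \ref{lem2.7} is stated only on $(0,T_\star)\cap(0,\tme)$, so it must be checked that every norm in \eqref{2.39} is genuinely controlled by the available estimates. In particular $\|w_\eps\|_{L^\infty(\Om)}$ needs the one-dimensional Sobolev embedding combined with the mean-zero normalization \eqref{2.40}. The remaining steps are direct bookkeeping.
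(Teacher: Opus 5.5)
Your proposal is correct and takes the same route as the paper. The paper's proof is a one-line appeal to the extensibility criterion of Lemma \ref{lem2.3}, Lemmas \ref{lem2.5}, \ref{lem2.6}, \ref{lem2.8} and \ref{lem2.7}, and the embedding $W^{1,2}(\Om)\hookrightarrow L^\infty(\Om)$. You have written out the details it leaves implicit: the contradiction argument giving $\tme\ge T_\star$, and the Poincaré step using the mean-zero property \eqref{2.40} to pass from \eqref{3.58} to full Sobolev norms.
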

\proof
Thanks to the continuity of the embedding $W^{1,2}(\Om)\hookrightarrow L^\infty(\Om)$, all claimed properties immediately result from Lemma \ref{lem2.3}, Lemma \ref{lem2.5}, Lemma \ref{lem2.6}, Lemma \ref{lem2.8} and Lemma \ref{lem2.7}.
\qed
Beyond the spatial bounds collected so far, we shall also need some control on the time derivatives of the approximate solutions.
\begin{lem}\label{lem2.11}
There exists $K>0$ such that with $\eps_{\star\star}$ and $T_\star$ as in Lemma \ref{lem2.6} and Lemma \ref{lem2.7},
\bea{3.88}
\|w_{\eps t}(\cdot,t)\|_{(W^{1,2}(\Om))^\star}\le K,\qquad\|v_{\eps t}(\cdot,t)\|_{L^2(\Om)}\le K\qquad\text{and}\qquad \|u_{\eps t}(\cdot,t)\|_{L^2(\Om)}\le K
\eea
for all $t\in(0,T_\star)$ and $\eps\in(0,\eps_{\star\star})$.
\end{lem}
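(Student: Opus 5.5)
We estimate each of the three time derivatives by reading it off the corresponding equation in \eqref{approx} and inserting the $\eps$-independent bounds of Corollary \ref{cor2.10}. Throughout, we let $K_1$ denote the constant provided there and work on $(0,T_\star)$ with $\eps\in(0,\eps_{\star\star})$. By Lemma \ref{lem2.7} and Lemma \ref{lem2.4}, the hypothesis \eqref{3.26} holds on this interval, so \eqref{3.27} and \eqref{3.28} are available.

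The two lower-order bounds are immediate. From the third equation, $u_{\eps t}=\eps u_{\eps xx}+v_\eps$, and since $\eps<1$,
\[
\|u_{\eps t}\|_{L^2(\Om)}\le \|u_{\eps xx}\|_{L^2(\Om)}+\|v_\eps\|_{L^2(\Om)}\le 2K_1
\]
by \eqref{3.84} and \eqref{3.85}. The second equation gives $v_{\eps t}=\eps v_{\eps xx}+w_\eps$ in the same way, so that $\|v_{\eps t}\|_{L^2(\Om)}\le 2K_1$ by \eqref{3.83} and \eqref{3.84}.

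For $w_{\eps t}$ we argue by duality. We fix $\psi\in W^{1,2}(\Om)$, multiply the first equation in \eqref{approx} by $\psi$, and integrate by parts, using $w_{\eps x}=v_{\eps x}=u_{\eps x}=0$ on $\partial\Om$ so that no boundary terms arise. This yields
\[
\tau\io w_{\eps t}\psi=-\eps\tau\io w_{\eps x}\psi_x-\io\big\{D_\eps(\Theta_\eps)v_{\eps x}+C_\eps(\Theta_\eps)u_{\eps x}\big\}\psi_x-\io w_\eps\psi .
\]
By \eqref{3.27}, the right-hand side is bounded by
\[
\Big(\tau\|w_{\eps x}\|_{L^2(\Om)}+k_2\|v_{\eps x}\|_{L^2(\Om)}+k_4\|u_{\eps x}\|_{L^2(\Om)}+\|w_\eps\|_{L^2(\Om)}\Big)\|\psi\|_{W^{1,2}(\Om)},
\]
and each norm in the bracket is controlled by $K_1$ through \eqref{3.83}--\eqref{3.85}. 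Taking the supremum over $\psi$ with $\|\psi\|_{W^{1,2}(\Om)}\le 1$ gives the bound on $\|w_{\eps t}\|_{(W^{1,2}(\Om))^\star}$.

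Choosing $K$ as the maximum of the three resulting constants proves the lemma. There is no real obstacle here. The only point requiring care is that the divergence-form term contains $v_{\eps xx}$ and $\Theta_{\eps x}$, which would need control in $L^\infty$ if estimated directly. Integrating by parts first avoids this, since only first derivatives of $v_\eps$ and $u_\eps$ are then required, and the boundary terms vanish by the Neumann conditions.
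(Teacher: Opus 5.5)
Your argument is correct, and it is the same as the paper's. The paper simply invokes \cite[Lemma 3.13]{CW26}, noting that the first three equations of \eqref{approx} coincide with those there under the substitution \eqref{e11}; you carry out that lemma's computation directly from Corollary \ref{cor2.10} and \eqref{3.27}. One small correction to your closing remark: the dual norm is needed because of the term $\eps\tau w_{\eps xx}$, which has no $\eps$-uniform $L^2$ bound, whereas the divergence terms are in fact bounded in $L^2(\Om)$ by \eqref{3.84}, \eqref{3.85} and \eqref{3.87}.
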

\proof
Since the first, second and third equations in \eqref{approx} coincide, under the substitution described in the Introduction, with those in \cite{CW26}, this is a direct consequence of \cite[Lemma 3.13]{CW26}.
\qed
It remains to obtain an analogous bound for the time derivate of $g_\eps$, which follows by a direct computation.
\begin{lem}\label{lem2.12}
With $\eps_{\star\star}$ and $T_\star$ as in Lemma \ref{lem2.6} and Lemma \ref{lem2.7}, there exists $K>0$ such that
\bea{3.90}
\int_0^{T_\star}\io g_{\eps t}^2\le K\qquad\text{for all }\eps\in(0,\eps_{\star\star}).
\eea
\end{lem}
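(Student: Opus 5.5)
We use the fifth equation in \eqref{approx} in the form $g_{\eps t}=\eps g_{\eps xx}+h_\eps$, with $h_\eps$ as in \eqref{2.36.1}. This is already how $g_{\eps t}$ was handled in the proof of Lemma \ref{lem2.9}. The point is that no testing procedure is needed: the bound follows directly from the estimates already collected for $h_\eps$ and for the dissipated quantity $\eps\int\!\!\int g_{\eps xx}^2$.

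\textbf{First step.} By Lemma \ref{lem2.7}, the hypothesis \eqref{3.26} holds with $T:=T_\star$ for every $\eps\in(0,\eps_{\star\star})$, since Corollary \ref{cor2.10} guarantees $\tme\ge T_\star$. Because $T_\star<T_0$, we may therefore apply Lemma \ref{lem2.8} and Lemma \ref{lem2.9} on $(0,T_\star)$. This gives
\[
\eps\int_0^{t}\io g_{\eps xx}^2\le k_{17}
\qquad\text{and}\qquad
\|h_\eps(\cdot,t)\|_{L^\infty(\Om)}\le k_{18}
\qquad\text{for all }t\in(0,T_\star),
\]
uniformly in $\eps\in(0,\eps_{\star\star})$.

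\textbf{Second step.} We use $(a+b)^2\le 2a^2+2b^2$ pointwise and integrate over $\Om\times(0,t)$:
\[
\int_0^{t}\io g_{\eps t}^2\le 2\eps^2\int_0^{t}\io g_{\eps xx}^2+2\int_0^{t}\io h_\eps^2
\le 2\eps k_{17}+2k_{18}^2|\Om|T_\star
\qquad\text{for all }t\in(0,T_\star).
\]
Since $\eps<1$, the right-hand side is at most $2k_{17}+2k_{18}^2|\Om|T_\star$. Letting $t\nearrow T_\star$ by monotone convergence then yields \eqref{3.90} with $K:=2k_{17}+2k_{18}^2|\Om|T_\star$.

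\textbf{Expected obstacle.} The only delicate point is the bookkeeping of validity intervals. Lemmas \ref{lem2.8} and \ref{lem2.9} are stated on $(0,T)\cap(0,T_0)$ under hypothesis \eqref{3.26}, so we must make sure that $T_\star\le T_0$ and that \eqref{3.26} is available up to $T_\star$. Lemma \ref{lem2.7} and Corollary \ref{cor2.10} settle both points. The factor $\eps^2$ rather than $\eps$ in front of $\int\!\!\int g_{\eps xx}^2$ only helps, so no smallness of $T_\star$ beyond its earlier choice is needed.
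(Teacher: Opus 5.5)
Your proposal is correct and is essentially the paper's own proof: write $g_{\eps t}=\eps g_{\eps xx}+h_\eps$, split with $(a+b)^2\le 2a^2+2b^2$, and bound the two pieces by $\eps<1$ together with Lemma \ref{lem2.8} and by \eqref{2.60}. Your check of the validity intervals via Lemma \ref{lem2.7} and Corollary \ref{cor2.10}, which the paper leaves implicit, is a useful addition; since Lemma \ref{lem2.8} formally requires $T<\tme$, apply it for each $T<T_\star$, which your monotone-convergence step already covers.
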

\proof
By the fifth equation in \eqref{approx} and with $h_\eps$ as in \eqref{2.36.1}, Young's inequality yields
$$\int_0^{T_\star}\io g_{\eps t}^2\le 2\int_0^{T_\star}\io \eps^2 g_{\eps xx}^2+2\int_0^{T_\star}\io h_\eps^2\le 2\eps\int_0^{T_\star}\io g_{\eps xx}^2+2\|h_\eps\|_{L^\infty(\Om)}^2|\Om|T_\star\qquad\text{for all }\eps\in(0,1),$$
where the right-hand side is bounded, uniformly in $\eps\in(0,\eps_{\star\star})$, by Lemma \ref{lem2.8} and by \eqref{2.60}.
\qed
\subsection{Passage to the limit and proof of Theorem \ref{theo1}}\label{sec2.3}
The prior estimates established in Section \ref{sec2.2} now put us in a position to construct a strong solution of \eqref{0} as a limit of the approximate solutions of \eqref{approx}, defined up to the time $T_\star$ from Lemma \ref{lem2.7}:
\begin{lem}\label{lem2.13}
If $\eps_{\star\star}$ and $T_\star$ are as found in Lemma \ref{lem2.6} and Lemma \ref{lem2.7}, then one can find $(e_j)_{j\in\mathbb{N}}\subset(0,\eps_{\star\star})$ as well as functions
\be{3.91}
	\lball
	u\in C^0([0,T_\star];W_N^{2,2}(\Om)),\\
	\Theta\in C^0([0,T_\star];C^1(\ov\Om))\cap C^{2,1}(\ov\Om\times(0,T_\star))\cap W^{1,\infty}(\Om\times(0,T_\star))\qquad \text{and}\\
	g\in C^1([0,T_\star];C^0(\ov\Om))
	\ear
\ee
with
\bea{3.92}
u_t\in C^0([0,T_\star];C^1(\ov\Om))\cap L^\infty((0,T_\star);W_N^{2,2}(\Om))
\eea
and
\bea{3.93}
u_{tt}\in C^0(\ov\Om\times[0,T_\star])\cap L^\infty((0,T_\star);W^{1,2}(\Om))
\eea
such that $\Theta\ge 0$ in $\Om\times(0,T_\star)$, that $e_j\searrow 0$ as $j\to\infty$, and that
\bea{nn}
&&u_\eps\to u\qquad\text{in }C^0([0,T_\star];C^1(\ov\Om)),\label{3.94}\\
&&v_\eps\to u_t\qquad\text{in }C^0([0,T_\star];C^1(\ov\Om)),\label{3.95}\\
&&w_\eps\to u_{tt}\qquad\text{in }C^0(\ov\Om\times[0,T_\star]),\label{3.96}\\
&&\Theta_\eps\to\Theta\qquad\text{in }C^0([0,T_\star];C^1(\ov\Om)),\label{3.97}\\
&&\Theta_{\eps t}\stackrel{\ast}{\rightharpoonup}\Theta_t\qquad\text{in }L^\infty(\Om\times(0,T_\star)),\label{3.98}\\
&&\Theta_{\eps xx}\stackrel{\ast}{\rightharpoonup}\Theta_{xx}\qquad\text{in }L^\infty(\Om\times(0,T_\star)),\qquad\label{3.99}\qquad\text{and}\\
&&g_\eps\to g\qquad\text{in }C^0([0,T_\star];C^0(\ov\Om))\label{2.96}
\eea
as $\eps=\eps_j\searrow 0$, and that $(u,\Theta,g)$ forms a strong solution of \eqref{0} in $\Om\times(0,T_\star)$ in the sense specified in Definition \ref{def2.1}.
\end{lem}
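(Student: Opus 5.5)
We will extract the limit by compactness arguments based on Corollary \ref{cor2.10}, Lemma \ref{lem2.11} and Lemma \ref{lem2.12}, and then verify the requirements of Definition \ref{def2.1} by passing to the limit in \eqref{approx}.

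\textbf{Compactness for $u_\eps,v_\eps,w_\eps$.} The parts concerning $u_\eps,v_\eps,w_\eps$ follow \cite[Lemma 3.14]{CW26} under the substitution \eqref{e11}. By \eqref{3.84}, \eqref{3.85} and \eqref{3.88}, the families $(u_\eps)$ and $(v_\eps)$ are bounded in $L^\infty((0,T_\star);W^{2,2}(\Om))$, and their time derivatives are bounded in $L^\infty((0,T_\star);L^2(\Om))$. The Aubin--Lions lemma (in its Simon form, using the compact embedding $W^{2,2}\hookrightarrow C^1(\ov\Om)$) therefore yields \eqref{3.94} and \eqref{3.95}. For $w_\eps$, \eqref{3.83} together with the dual bound in \eqref{3.88} gives convergence in $C^0([0,T_\star];C^0(\ov\Om))$, since $W^{1,2}\hookrightarrow C^0(\ov\Om)$ is compact. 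Identifying the limits as $u_t$ and $u_{tt}$ then follows from $u_{\eps t}=\eps u_{\eps xx}+v_\eps$, because $\eps u_{\eps xx}\to0$ by \eqref{3.85}; the same argument applies to $v_\eps$. Weak-$\ast$ lower semicontinuity gives \eqref{3.92} and \eqref{3.93}.

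\textbf{Compactness for $\Theta_\eps$ and $g_\eps$.} By \eqref{3.86} and \eqref{3.87}, $\Theta_\eps$ is bounded in $W^{1,\infty}(\Om\times(0,T_\star))$ and in $L^\infty((0,T_\star);W^{2,\infty})$. Arzel\`a--Ascoli (or Aubin--Lions with $W^{2,\infty}\hookrightarrow C^1$) gives \eqref{3.97}, and weak-$\ast$ compactness gives \eqref{3.98} and \eqref{3.99}. For $g_\eps$, the bound \eqref{2.83} in $W^{1,2}$ combined with $g_{\eps t}$ bounded in $L^2(\Om\times(0,T_\star))$ from Lemma \ref{lem2.12} yields, again by Aubin--Lions, $g_\eps\to g$ in $C^0(\ov\Om\times[0,T_\star])$, i.e.\ \eqref{2.96}. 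Nonnegativity of $\Theta$ and the boundary conditions $u_x=u_{xt}=\Theta_x=g=0$ pass to the limit directly.

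\textbf{Identifying the limit equations.} The weak identity \eqref{2.4} is obtained by testing the first three equations of \eqref{approx} as in \cite{CW26}, using $D_\eps(\Theta_\eps)\to D(\Theta)$ uniformly by \eqref{2.31} and \eqref{3.97}. For $g$, integrating the fifth equation against test functions $\psi\in C_0^\infty(\Om\times[0,T_\star))$ gives
\[
-\tau\int\!\!\int g_\eps\psi_t-\tau\io g_{0\eps}\psi(\cdot,0)=\eps\tau\int\!\!\int g_\eps\psi_{xx}+\int\!\!\int\big(D_\eps(\Theta_\eps)v_{\eps x}+C_\eps(\Theta_\eps)u_{\eps x}-g_\eps\big)\psi .
\]
The right-hand integrand converges uniformly to the continuous function $\tau\,\tilde h:=D(\Theta)u_{xt}+C(\Theta)u_x-g$. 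Hence $\tau g_t=\tau\tilde h$ in the distributional sense. Since $\tilde h$ lies in $C^0(\ov\Om\times[0,T_\star])$, we get $g(x,t)=g_0(x)+\int_0^t\tilde h(x,s)\,ds$ pointwise, so $g\in C^1([0,T_\star];C^0(\ov\Om))$ and the relaxation equation holds classically. In particular $h_\eps\to g_t$ uniformly. The source terms $|E_\eps(\Theta_\eps)v_{\eps x}+F_\eps(\Theta_\eps)h_\eps|^2$ then converge uniformly to $|E(\Theta)u_{xt}+F(\Theta)g_t|^2$, so $\Theta$ solves the heat equation in the weak/strong sense with a continuous source.

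\textbf{Main obstacle: regularity of $\Theta$.} The step I expect to be hardest is upgrading $\Theta$ to $C^{2,1}(\ov\Om\times(0,T_\star))$, because the source is only known to be continuous in $(x,t)$ at this stage. I would use the Hölder regularity of the source: $u_{xt}$ is Hölder continuous in $x$ (from $W^{1,2}$) and in $t$ (by interpolating $W^{2,2}$ bounds with the $L^2$ bound on $u_{tt}$), and $\Theta$ is Lipschitz. For $g_t=\tilde h$, Hölder continuity in $x$ is the delicate point. I would derive it from the representation formula in the Remark, using $g_0\in W^{1,2}$ and $u_{xx},u_{xxt}\in L^\infty(L^2)$. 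With a Hölder continuous source, parabolic Schauder theory \cite{LSU} gives $\Theta\in C^{2+\alpha,1+\alpha/2}_{loc}$. Finally, the initial data are attained by the uniform convergences above, together with Remark i).
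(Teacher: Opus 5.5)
Your proposal is correct and follows essentially the same route as the paper: Aubin--Lions extractions for all five components (the one for $g_\eps$ via $W^{1,2}(\Om)\hookrightarrow C^0(\ov\Om)$), passage to the limit in the weak form of the $g_\eps$-equation, a pointwise integral representation giving $g\in C^1([0,T_\star];C^0(\ov\Om))$ and $h_\eps\to g_t$ uniformly, and finally H\"older continuity of the heat source followed by Schauder theory. The deviations are minor and in your favour. You obtain H\"older continuity of $|E(\Theta)u_{xt}+F(\Theta)g_t|^2$ factor by factor, using the representation formula only for the spatial $C^{1/2}$-regularity of $g$, whereas the paper first proves $H\in L^\infty((0,T_\star);W^{1,2}(\Om))$ and $H_t\in L^\infty((0,T_\star);L^2(\Om))$ via $g_x,g_{xt},g_{tt}\in L^\infty((0,T_\star);L^2(\Om))$ and then interpolates. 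Also, your test functions supported inside $\Om$ avoid the boundary term $\eps\tau\int_0^{T_\star}[g_{\eps x}\varphi]_{\partial\Om}$, which the paper's integration by parts with $\varphi\in C_0^\infty(\ov\Om\times[0,T_\star))$ tacitly drops (harmlessly, since it vanishes in the limit).
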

\proof
As in \cite[proof of Lemma 3.14]{CW26}, we combine the bounds from Corollary \ref{cor2.10}, Lemma \ref{lem2.11} and Lemma \ref{lem2.12} with five (rather than four) applications of an Aubin-Lions lemma (\cite{Aubin}), the fifth based on the compact embedding $W^{1,2}(\Om)\hookrightarrow C^0(\ov\Om)$ and controlling $g_\eps$. This provides $(e_j)_{j\in\mathbb{N}}\subset(0,\eps_{\star\star})$ such that $e_j\searrow 0$ as $j\to\infty$ and such that \eqref{3.94}-\eqref{2.96} hold, that 
\bea{2.99.1}
\Theta\in C^0([0,T_\star];C^1(\ov\Om))\cap L^\infty((0,T_\star);W^{2,\infty}(\Om))\qquad\text{and}\qquad g\in C^0([0,T_\star];C^0(\ov\Om)),
\eea
and that $u_t=v$ and $v_t=w$, so that $u$ already satisfies \eqref{3.91}-\eqref{3.93}. The nonnegativity of $\Theta$ is trivially inherited from that of $\Theta_\eps$ for $\eps\in(0,\eps_{\star\star})$ through \eqref{3.97}, and \eqref{2.4} as well as the properties $u(\cdot,0)=u_0$, $u_t(\cdot,0)=u_{0t}$, $\Theta(\cdot,0)=\Theta_0$ and $g(\cdot,0)=g_0$ can be derived using \eqref{3.94}-\eqref{2.96} together with \eqref{2.35} in a straightforward manner. Since \eqref{3.94}, \eqref{3.95} and \eqref{3.97} ensure that for each $\varphi\in C_0^\infty(\ov\Om\times[0,T_\star))$, in the identity
$$-\tau\int_0^{T_\star}\io g_\eps\varphi_t-\tau\io g_{0\eps}\varphi(\cdot,0)+\int_0^{T_\star}\io g_\eps\varphi=\int_0^{T_\star}\io \big\{D_\eps(\Theta_\eps)v_{\eps x}+C_\eps(\Theta_\eps)u_{\eps x}\big\}\cdot\varphi-\eps\tau\int_0^{T_\star}\io g_{\eps x}\varphi_x,$$
for $\eps\in(0,1)$, the last term on the right tends to $0$ as $\eps\searrow 0$, by Cauchy-Schwarz inequality and since $g_{\eps}$ is bounded in $W^{1,2}(\Om)$ uniformly in $t$ by Corollary \ref{cor2.10}, we may let $\eps=\eps_j\searrow 0$ to verify that
$$-\tau\int_0^{T_\star}\io g\varphi_t-\tau\io g_{0}\varphi(\cdot,0)+\int_0^{T_\star}\io g\varphi=\int_0^{T_\star}\io F\varphi,\qquad F:=D(\Theta)u_{xt}+C(\Theta)u_x,$$
we furthermore obtain that the problem
\be{2.100}
	\lball
	\tau g_t+g=F(x,t),& x\in\Om,\ t\in(0,T_\star),\\[1mm]
	g=0,& x\in\partial\Om,\ t\in(0,T_\star),\\[1mm]
	g(x,0)=g_0(x),& x\in\Om,
	\ear
\ee
is solved in the standard weak sense. Because $F\in C^0(\ov\Om\times[0,T_\star])$ by the regularity of $\Theta$, $u$ and $u_t$ and because $g\in C^0([0,T_\star];C^0(\ov\Om))$, for each fixed $x\in\Om$, variation of constants applied to \eqref{2.100} shows that
\bea{2.102}
g(x,t)=g_0(x)e^{-\frac{t}{\tau}}+\frac{1}{\tau}\int_0^t e^{-\frac{t-s}{\tau}}F(x,s)ds,\qquad\text{for all }t\in[0,T_\star].
\eea
In particular, $g$ is classically differentiable with respect to $t$, with $\tau g_t=F-g$ in $\ov\Om\times[0,T_\star]$, since the right-hand side belongs to $C^0(\ov\Om\times[0,T_\star])$, we obtain $g\in C^1([0,T_\star];C^0(\ov\Om))$, as claimed in \eqref{3.91}, and that $g$ solves the corresponding equation in the classical sense. We now single out two further properties of $g$, understood in the weak (distributional) sense. Since $u$ satisfies \eqref{3.91}-\eqref{3.93}, and $\Theta$ satisfies \eqref{2.99.1} with $\Theta_t\in L^\infty(\Om\times(0,T_\star))$ by \eqref{3.98}, differentiating $F$, justified by the standard chain rule for the composition of $C^1$ functions $D$, $C$ with $\Theta\in W^{1,\infty}$, together with the (weak) Leibniz rule for the product of the resulting $L^\infty$ factors $D(\Theta)$, $C(\Theta)$ with the $W^{1,2}$ functions $u_{xt}$, $u_x$ (see, e.g. \cite{Friedman69}), shows that
\bea{nn}
F_x=D'(\Theta)\Theta_x u_{xt}+D(\Theta)u_{xxt}+C'(\Theta)\Theta_x u_x+C(\Theta)u_{xx}\nn
\eea
and
\bea{nn}
F_t=D'(\Theta)\Theta_t u_{xt}+D(\Theta)u_{xtt}+C'(\Theta)\Theta_t u_x+C(\Theta)u_{xt}\nn
\eea
both belong to $L^\infty((0,T_\star);L^2(\Om))$. Since moreover $g_0\in W^{1,2}(\Om)$, and since \eqref{2.102} holds for every $x\in\Om$, we may differentiate it with respect to $x$, showing that $g$ is differentiable with respect to $x$, with
$$g_x(\cdot,t)=g_{0x}e^{-\frac{t}{\tau}}+\frac{1}{\tau}\int_0^t e^{-\frac{t-s}{\tau}}F_x(\cdot,s)ds,\qquad\text{for all }t\in[0,T_\star],$$ so that $g_x\in L^\infty((0,T_\star);L^2(\Om))$. Differentiating $\tau g_t=F-g$ in $\ov\Om\times[0,T_\star]$ with respect to $x$ and $t$ then shows that also
\bea{2.102.1}
g_{xt},g_{tt}\in L^\infty((0,T_\star);L^2(\Om)).
\eea
It remains to upgrade the regularity of $\Theta$ from \eqref{2.99.1} to the classical regularity claimed in \eqref{3.91}. To this end, we once more draw on \eqref{3.94}, \eqref{3.95}, \eqref{3.97} and \eqref{2.96}, which, applied to $h_\eps$ as in \eqref{2.36.1}, in particular yield
$$h_\eps=\frac{1}{\tau}\big\{D_\eps(\Theta_\eps)v_{\eps x}+C_\eps(\Theta_\eps)u_{\eps x}-g_\eps\big\}\to\frac{1}{\tau}\big\{F-g\big\}=g_t$$ as $\eps=\eps_j\searrow 0$. For each $\psi\in C_0^\infty(\ov\Om\times[0,T_\star))$, we may pass to the limit in the identity
$$-\int_0^{T_\star}\io\Theta_\eps\psi_t-\io\Theta_{0\eps}\psi(\cdot,0)=-\int_0^{T_\star}\io\Theta_{\eps x}\psi_x+\int_0^{T_\star}\io|E_\eps(\Theta_\eps)v_{\eps x}+F_\eps(\Theta_\eps)h_\eps|^2\psi,$$ for $\eps\in(0,1)$, to verify that
$$-\int_0^{T_\star}\io\Theta\psi_t-\io\Theta_{0}\psi(\cdot,0)=-\int_0^{T_\star}\io\Theta_{x}\psi_x+\int_0^{T_\star}\io H\psi,\qquad H:=|E(\Theta)u_{xt}+F(\Theta)g_t|^2,$$ and we furthermore obtain that the problem
\be{2.103}
	\lball
	\Theta_t=\Theta_{xx}+H(x,t),& x\in\Om,\ t\in(0,T_\star),\\[1mm]
	\Theta_x=0,& x\in\partial\Om,\ t\in(0,T_\star),\\[1mm]
	\Theta(x,0)=\Theta_0(x),& x\in\Om,
	\ear
\ee
is solved in the standard weak sense specified, e.g., in \cite{LSU}. We claim that
\bea{2.104}
H\in C^0([0,T_\star];C^0(\ov\Om))\cap L^\infty((0,T_\star);W^{1,2}(\Om))\qquad\text{and}\qquad H_t\in L^\infty((0,T_\star);L^2(\Om)).
\eea
As for $F$ above, this follows from \eqref{3.91}-\eqref{3.93}, \eqref{3.98} and \eqref{2.102.1}. By a Gagliardo-Nirenberg interpolation as in \cite[proof of Lemma 3.14]{CW26}, \eqref{2.104} implies that $H\in C^\frac{1}{2}([0,T_\star];C^0(\ov\Om))$; together with the Hölder continuity in $x$ following from $H\in L^\infty((0,T_\star);W^{1,2}(\Om))$ via the compact embedding $W^{1,2}(\Om)\hookrightarrow C^{\vartheta_1}(\ov\Om)$ for some $\vartheta_1\in(0,1)$, this shows that $H\in C^{\vartheta,\frac{\vartheta}{2}}(\ov\Om\times[0,T_\star])$ for some $\vartheta\in(0,1)$. Standard parabolic theory (\cite{LSU}, \cite{PV}) then yields $\Theta\in C^{2,1}(\ov\Om\times(0,T_\star))$, as claimed in \eqref{3.91} and forms a classical solution of the respective sub-solution in \eqref{0}.
\qed
With Lemma \ref{lem2.13} at hand, we can now finally establish Theorem \ref{theo1} by means of a straightforward extension argument as well as a simple generalization to arbitrary initial data not necessarily satisfying \eqref{---}.\abs\abs
{\sc Proof of Theorem \ref{theo1}.\quad} We first consider the case in which \eqref{---} holds, that is, when the numbers
\bea{4.1}
y_0:=\frac{1}{|\Om|}\io u_0,\qquad y_1:=\frac{1}{|\Om|}\io u_{0t}\qquad\text{and}\qquad y_2:=\frac{1}{|\Om|}\io u_{0tt}
\eea
satisfy $y_0=y_1=y_2=0$. As in \cite[proof of Theorem \ref{theo1}]{CW26}, we may draw on Lemma \ref{lem2.13} to see that
$$\tm:=\sup\Big\{T>0\ \Big|\ \text{There exists a strong solution of \eqref{0} in }\Om\times(0,T)\Big\}$$
is a well-defined element of $(0,\infty]$ which, by Lemma \ref{lem2.2}, has the property that there exists exactly one strong solution $(u,\Theta,g)$ of \eqref{0} in all of $\Om\times(0,\tm)$. Now if this solution violated \eqref{1.16}, then $\tm<\infty$ and
\bea{4.2}
\|u_t\|_{W^{2,2}(\Om)}+\|u_{tt}\|_{W^{1,2}(\Om)}+\|\Theta\|_{W^{2,\infty}(\Om)}\le c_1\qquad\text{for all }t\in(t_0,\tm)
\eea
with some $c_1>0$ and some $t_0\in(0,\tm)$, which particularly would imply that
\bea{4.3}
\|u(\cdot,t)\|_{W^{2,2}(\Om)}=\bigg\|u(\cdot,t_0)+\int_{t_0}^t u_t(\cdot,s)ds\bigg\|_{W^{2,2}(\Om)}\le \|u(\cdot,t_0)\|_{W^{2,2}(\Om)}+(\tm-t_0)\cdot c_1
\eea
for all $t\in(t_0,\tm)$, and thus also
\bea{2.107}
\big\|D(\Theta)u_{xt}+C(\Theta)u_x\big\|_{W^{1,2}(\Om)}\le c_2\qquad\text{for all }t\in(t_0,\tm)\nn
\eea
for some $c_2=c_2(c_1)>0$. This shows that
\bea{2.108}
\|g(\cdot,t)\|_{W^{1,2}(\Om)}&=&\bigg\|g(\cdot,t_0)e^{-\frac{t-t_0}{\tau}}+\frac{1}{\tau}\int_{t_0}^t e^{-\frac{t-s}{\tau}}\big\{D(\Theta(\cdot,s))u_{xt}(\cdot,s)+C(\Theta(\cdot,s))u_x(\cdot,s)\big\}\bigg\|_{W^{1,2}(\Om)}ds\nn\\
&\le& \|g(\cdot,t_0)\|_{W^{1,2}(\Om)}+\frac{1}{\tau}\int_{t_0}^{\tm}\big\|D(\Theta(\cdot,s))u_{xt}(\cdot,s)+C(\Theta(\cdot,s))u_x(\cdot,s)\big\|_{W^{1,2}(\Om)}ds\nn\\
&\le& \|g(\cdot,t_0)\|_{W^{1,2}(\Om)}+\frac{\tm-t_0}{\tau}\cdot c_2\qquad\text{for all }t\in(t_0,\tm).
\eea
On the basis of \eqref{4.2}-\eqref{2.108}, we could accordingly fix $(t_j)_{j\in\mathbb{N}}\subset(t_0,\tm)$ as well as $\tilde{u}_0,\tilde{u}_{0t}\in W_N^{2,2}(\Om)$, $\tilde{u}_{0tt},\tilde{g}_0\in W^{1,2}(\Om)$ and $\tilde{\Theta}_0\in W_N^{2,\infty}(\Om;[0,\infty))$ such that $t_j\nearrow\tm$ as $j\to\infty$, that
$$u(\cdot,t_j)\rightharpoonup \tilde{u}_0\qquad\text{and}\qquad u_t(\cdot,t_j)\rightharpoonup\tilde{u}_{0t}\qquad\text{in }W^{2,2}(\Om)\qquad\text{as }j\to\infty,$$
that
$$u_{tt}(\cdot,t_j)\rightharpoonup\tilde{u}_{0tt}\qquad\text{and}\qquad g(\cdot,t_j)\rightharpoonup\tilde{g}_0\qquad\text{in }W^{1,2}(\Om)\qquad\text{as }j\to\infty,$$
and that
$$\Theta(\cdot,t_j)\stackrel{\ast}{\rightharpoonup}\tilde{\Theta}_0\qquad\text{in }W^{2,\infty}(\Om)\qquad\text{as }j\to\infty.$$
Applying Lemma \ref{lem2.13} once more, now with initial data $(\tilde{u}_0,\tilde{u}_{0t},\tilde{u}_{0tt},\tilde{\Theta}_0,\tilde{g}_0)$ and with $t$ shifted by $\tm$, we obtain $T>\tm$ together with a strong solution $(\tilde{u},\tilde{\Theta},\tilde{g})$ of \eqref{0} in $\Om\times(\tm,T)$ satisfying $(\tilde{u},\tilde{u}_t,\tilde{u}_{tt},\tilde{\Theta},\tilde{g})|_{t=\tm}=(\tilde{u}_0,\tilde{u}_{0t},\tilde{u}_{0tt},\tilde{\Theta}_0,\tilde{g}_0)$. Gluing $(u,\Theta,g)$ and $(\tilde{u},\tilde{\Theta},\tilde{g})$ together across $t=\tm$ then produces a strong solution of \eqref{0} on all $\Om\times(0,T)$, contradicting the maximality of $\tm$.\abs 
Having thus completed the proof in the case when \eqref{---} holds, we are left with the general situation in which the quantities $y_0$, $y_1$ and $y_2$ are arbitrary real numbers. As in \cite[proof of Theorem 1.1]{CW26}, the functions $\tilde{u}_0:=u_0-y_0$, $\tilde{u}_{0t}:=u_{0t}-y_1$ and $\tilde{u}_{0tt}:=u_{0tt}-y_2$ satisfy $\io\tilde{u}_0=\io\tilde{u}_{0t}=\io\tilde{u}_{0tt}=0$, so that, repeating the above with $(u_0,u_{0t},u_{0tt})$ replaced by $(\tilde{u}_0,\tilde{u}_{0t},\tilde{u}_{0tt})$, we obtain a strong solution $(\tilde{u},\tilde{\Theta},\tilde{g})$ of \eqref{0} in $\Om\times(0,\tm)$ satisfying \eqref{1.16}. Letting $y\in C^\infty([0,\infty))$ solve $\tau y'''+y''=0$ in $(0,\infty)$ with $y(0)=y_0$, $y'(0)=y_1$ and $y''(0)=y_2$, and setting $u(x,t):=\tilde{u}(x,t)+y(t)$, $\Theta(x,t):=\tilde{\Theta}(x,t)$ and $g(x,t):=\tilde{g}(x,t)$ for $(x,t)\in\ov\Om\times[0,\tm)$, we obtain the desired strong solution of \eqref{0} on $\Om\times(0,\tm)$ which satisfies \eqref{1.13}-\eqref{1.16}.
\qed
\mysection{Classical solutions}\label{classical}
Having established local strong solvability in Theorem \ref{theo1}, we now turn to the question of classical solvability, as announced in Theorem \ref{theo2}. To this end, we build on the approximate system \eqref{approx} introduced in Section \ref{strong}, together with several of the estimates derived there, and impose accordingly strengthened hypotheses on the initial data and the structural functions $D,C,E,F$. The classical solution asserted in Theorem \ref{theo2} will then arise as a regularity extension of the strong solution already constructed in Section \ref{strong}, similarly to what was done in \cite{FW26}.\abs
Specifically, here and below we assume that, in addition to the hypotheses made in Section \ref{strong}, $D,C\in C^3([0,\infty))$ and that
\bea{5.1}
D_\eps\to D\qquad\text{and}\qquad C_\eps\to C\qquad\text{in }C_{loc}^3([0,\infty))\text{ as }\eps\searrow 0,
\eea
that $u_0,u_{0t}$ and $u_{0tt}$ satisfy the strengthened regularity requirements of \eqref{1.20}, and that the corresponding approximations are chosen so as to comply with
\bea{5.2}
u_{0\eps}\to u_0,\qquad v_{0\eps}\to u_{0t}\qquad\text{in }W^{3,2}(\Om)\qquad\text{and}\qquad w_{0\eps}\to u_{0tt}\qquad\text{in }W^{2,2}(\Om)
\eea
as $\eps\searrow 0$; in particular, $\eps_{\star\star}$ and $T_\star$ retain the meaning assigned to them in Lemma \ref{lem2.6} and Lemma \ref{lem2.7}.
\subsection{Auxiliary estimates}
The only genuinely new estimate we need, beyond those already available from Section \ref{strong}, is an $\eps$-independent bound on the third-order spatial derivative of $\Theta_\eps$. Its proof rests on the diffusive structure of the heat equation in \eqref{approx}, together with the bounds from Corollary \ref{cor2.10}.
\begin{lem}\label{lem3.1}
There exists $K>0$ such that if we let $\eps_{\star\star}$ and $T_\star$ be as in Lemma \ref{lem2.6} and Lemma \ref{lem2.7}, then
\bea{5.10}
\int_0^{T_\star}\io \Theta_{\eps xxx}^2\le K,\qquad\text{for all }\eps\in(0,\eps_{\star\star}).
\eea 
\end{lem}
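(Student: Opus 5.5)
The plan is to obtain \eqref{5.10} from an energy estimate for $z_\eps:=\Theta_{\eps x}$, exploiting that the heat equation in \eqref{approx} is non-degenerate and that its source $f_{1,\eps}=a_\eps^2$ (with $a_\eps$ as in Lemma \ref{lem2.9}) is already controlled in $W^{1,2}(\Om)$ by the bounds collected in Corollary \ref{cor2.10}.

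\textbf{Differentiated equation.} Differentiating the fourth equation in \eqref{approx} with respect to $x$ gives
$$z_{\eps t}=z_{\eps xx}+(f_{1,\eps})_x\qquad\text{in }\Om\times(0,\tme).$$
Since $\Theta_{\eps x}=0$ on $\partial\Om$, we have $z_\eps=0$ there, and hence also $z_{\eps t}=0$ on $\partial\Om\times(0,\tme)$.

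\textbf{Energy estimate.} We test this equation against $-z_{\eps xx}=-\Theta_{\eps xxx}$ and integrate by parts. The boundary term $z_{\eps t}z_{\eps x}|_{\partial\Om}$ vanishes, so Young's inequality yields
$$\frac12\frac{d}{dt}\io\Theta_{\eps xx}^2+\io\Theta_{\eps xxx}^2=-\io (f_{1,\eps})_x\Theta_{\eps xxx}\le\frac12\io\Theta_{\eps xxx}^2+\frac12\io (f_{1,\eps})_x^2 .$$
By \eqref{2.37}, the functions are smooth for $t>0$, so this computation is legitimate on $(t_0,T_\star)$ for any $t_0>0$. Since also $\Theta_\eps\in C^{2,1}(\ov\Om\times[0,\tme))$, we may let $t_0\searrow0$ after integrating in time. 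This gives, for all $t\in(0,T_\star)$,
$$\io\Theta_{\eps xx}^2(\cdot,t)+\int_0^t\io\Theta_{\eps xxx}^2\le\io\Theta_{0\eps xx}^2+\int_0^{T_\star}\io (f_{1,\eps})_x^2 .$$
Here $\Theta_{\eps}(\cdot,t)\to\Theta_{0\eps}$ in $W^{2,2}(\Om)$ as $t\searrow0$, and Corollary \ref{cor2.10} already gives $\tme\ge T_\star$. The first term on the right is bounded uniformly in $\eps$ by \eqref{2.35}, since $\Theta_{0\eps}$ is bounded in $W^{2,\infty}(\Om)$.

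\textbf{Control of the source.} The main work, though routine, is the $\eps$-independent bound on $\int_0^{T_\star}\io (f_{1,\eps})_x^2$. We write $(f_{1,\eps})_x=2a_\eps a_{\eps x}$ and use \eqref{2.59.0} to reduce matters to an $L^\infty((0,T_\star);L^2(\Om))$ bound for $a_{\eps x}$. Expanding,
$$a_{\eps x}=E_\eps'(\Theta_\eps)\Theta_{\eps x}v_{\eps x}+E_\eps(\Theta_\eps)v_{\eps xx}+F'_\eps(\Theta_\eps)\Theta_{\eps x}h_\eps+F_\eps(\Theta_\eps)h_{\eps x},$$
where
$$\tau h_{\eps x}=D'_\eps(\Theta_\eps)\Theta_{\eps x}v_{\eps x}+D_\eps(\Theta_\eps)v_{\eps xx}+C'_\eps(\Theta_\eps)\Theta_{\eps x}u_{\eps x}+C_\eps(\Theta_\eps)u_{\eps xx}-g_{\eps x}.$$
Each term is bounded as follows.
\begin{itemize}
\item The coefficients are bounded by Lemma \ref{lem2.4}, which applies on $(0,T_\star)$ thanks to Lemma \ref{lem2.7}.
\item $\Theta_{\eps x}$ is bounded in $L^\infty(\Om)$ by \eqref{3.87}.
\item $v_{\eps x}$ and $u_{\eps x}$ are bounded in $L^\infty(\Om)$, and $v_{\eps xx}$, $u_{\eps xx}$ in $L^2(\Om)$, by \eqref{3.84}, \eqref{3.85} and the embedding $W^{1,2}(\Om)\hookrightarrow L^\infty(\Om)$.
\item $h_\eps$ is bounded in $L^\infty(\Om)$ by \eqref{2.60}.
\item $g_{\eps x}$ is bounded in $L^2(\Om)$ by \eqref{2.83}.
\end{itemize}
Consequently $\io (f_{1,\eps})_x^2\le c$ for all $t\in(0,T_\star)$ and all $\eps\in(0,\eps_{\star\star})$, which yields \eqref{5.10}.

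The only genuinely new ingredient compared with \cite{FW26} is the term $g_{\eps x}$ entering through $h_{\eps x}$. This is exactly where Lemma \ref{lem2.8} is needed, in the form of \eqref{2.83}.
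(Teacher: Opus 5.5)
Your proposal is correct and follows essentially the same route as the paper. In both, the heat equation is tested so as to produce $\frac12\frac{d}{dt}\io\Theta_{\eps xx}^2+\io\Theta_{\eps xxx}^2=-\io(f_{1,\eps})_x\Theta_{\eps xxx}$, Young's inequality absorbs the right-hand side, and $(f_{1,\eps})_x=2a_\eps a_{\eps x}$ is controlled through \eqref{2.59.0} together with an $L^2$ bound on $a_{\eps x}$; that bound comes from expanding $h_{\eps x}$ and invoking Corollary \ref{cor2.10}, including the $g_{\eps x}$ bound \eqref{2.83}, with the initial time handled by the continuity of $\Theta_{\eps xx}$ up to $t=0$.
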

\proof
Let $h_\eps$ be as in \eqref{2.36.1}. Testing the fourth equation of \eqref{approx} in a straightforward manner, we see that thanks to Young's inequality,
\bea{5.13}
\frac{1}{2}\frac{d}{dt}\io \Theta_{\eps xx}^2&=&-\io \Theta_{\eps xxx}\Theta_{\eps xt}\nn\\
&=&-\io \Theta_{\eps xxx}\cdot\big\{\Theta_{\eps xxx}+2a_\eps a_{\eps x}\big\}\nn\\
&\le&-\frac{1}{2}\io\Theta_{\eps xxx}^2+2\io a_\eps^2 a_{\eps x}^2\qquad\text{for all }t\in(0,\tme)\text{ and }\eps\in(0,1),
\eea
with $a_\eps$ as in Lemma \ref{lem2.9}. By Corollary \ref{cor2.10}, fix $c_1>0$ such that
$$\|v_\eps\|_{W^{2,2}(\Om)}+\|u_\eps\|_{W^{2,2}(\Om)}+\|\Theta_\eps\|_{W^{2,\infty}(\Om)}+\|g_\eps\|_{W^{1,2}(\Om)}\le c_1$$
for all $t\in(0,T_\star)$ and $\eps\in(0,\eps_{\star\star})$. Then, taking $(k_i)_{i\in\{1,\dots,12\}}$ from Lemma \ref{lem2.4} and $k_{18}$ as in Lemma \ref{lem2.9}, we obtain from \eqref{3.27} and \eqref{3.28} that
\bea{nn}
\io h_{\eps x}^2&=&\frac{1}{\tau^2}\io \Big\{D_\eps'(\Theta_\eps)\Theta_{\eps x}v_{\eps x}+D_\eps(\Theta_\eps)v_{\eps xx}+C_\eps'(\Theta_\eps)\Theta_{\eps x}u_{\eps x}+C_\eps(\Theta_\eps)u_{\eps xx}-g_{\eps x}\Big\}^2\nn\\
&\le&\frac{5k_7^2}{\tau^2}\io\Theta_{\eps x}^2v_{\eps x}^2+\frac{5k_2^2}{\tau^2}\io v_{\eps xx}^2+\frac{5k_8^2}{\tau^2}\io \Theta_{\eps x}^2u_{\eps x}^2+\frac{5k_4^2}{\tau^2}\io u_{\eps xx}^2+\frac{5}{\tau^2}\io g_{\eps x}^2\nn\\
&\le& c_2:=\frac{5}{\tau^2}\big\{k_7^2c_1^4+k_2^2c_1^2+k_8^2c_1^4+k_4^2c_1^2+c_1^2\big\}\qquad\text{for all }t\in(0,T_\star)\text{ and }\eps\in(0,\eps_{\star\star})\nn
\eea
and, consequently, from \eqref{3.27}, \eqref{3.28.1} and \eqref{2.60} that
\bea{5.12}
\io a_{\eps x}^2&=&\io \Big\{E_\eps '(\Theta_\eps)\Theta_{\eps x}v_{\eps x}+E_\eps(\Theta_\eps)v_{\eps xx}+F_\eps'(\Theta_\eps)\Theta_{\eps x}h_\eps+F_\eps(\Theta_\eps)h_{\eps x}\Big\}^2\nn\\
&\le& c_3:=4\big\{k_{11}^2c_1^4+k_5^2c_1^2+k_{12}^2k_{18}^2c_1^2|\Om|+k_6^2c_2\big\}\qquad\text{for all }t\in(0,T_\star)\text{ and }\eps\in(0,\eps_{\star\star}).\ \ \
\eea
Combining \eqref{5.13} and \eqref{5.12} with \eqref{2.59.0}, we thus obtain
$$\frac{d}{dt}\io \Theta_{\eps xx}^2+\io \Theta_{\eps xxx}^2\le 4k_{18}^2c_3\qquad\text{for all }t\in(0,T_\star)\text{ and }\eps\in(0,\eps_{\star\star}),$$
which, upon an integration using \eqref{2.35} entails \eqref{5.10}, because $[0,\tme)\ni t\mapsto\io \Theta_{\eps xx}^2$ is continuous according to Lemma \ref{lem2.3}.
\qed
With Lemma \ref{lem3.1} at hand, we can now turn to the remaining higher-order estimates required for the proof of Theorem \ref{theo2}:
\begin{lem}\label{lem3.2}
Let $\eps_{\star\star}$ and $T_\star$ be as in Lemma \ref{lem2.6} and Lemma \ref{lem2.7}. Then there exist $\eps_{\star\star\star}\in(0,\eps_{\star\star})$ and $K>0$ such that
\bea{5.20}
\io w_{\eps xx}^2(\cdot,t)\le K,\qquad\io v_{\eps xxx}^2(\cdot,t)\le K\qquad\text{and}\qquad\io u_{\eps xxx}^2(\cdot,t)\le K,
\eea 
for all $t\in(0,T_\star)$ and $\eps\in(0,\eps_{\star\star\star})$.
\end{lem}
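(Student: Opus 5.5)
We plan to follow the higher-order quasi-energy approach of \cite{FW26}, introducing for $\eps\in(0,\eps_{\star\star})$ and $t\in(0,T_\star)$ the functional
$$z_\eps(t):=\frac12\io w_{\eps xx}^2+\frac{1}{2\tau}\io D_\eps(\Theta_\eps)v_{\eps xxx}^2+\frac1\tau\io C_\eps(\Theta_\eps)u_{\eps xxx}v_{\eps xxx}+\frac{B'}{2}\io u_{\eps xxx}^2,$$
with $B':=\frac{4k_4^2}{k_1\tau}$ as in Lemma \ref{lem2.6a}, possibly augmented by an $\eps$-weighted term $\frac{\eps}{\tau}\io C_\eps(\Theta_\eps)u_{\eps xxxx}^2$ if needed to absorb a cross term. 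Exactly as in Lemma \ref{lem2.6a}, Young's inequality together with \eqref{3.27} yields the coercivity $c\,z_\eps\ge\io w_{\eps xx}^2+\io v_{\eps xxx}^2+\io u_{\eps xxx}^2$. By \eqref{5.2}, $z_\eps(0)$ is bounded uniformly in $\eps$; if the $\eps$-weighted term is included, we will additionally require $\sqrt\eps\,u_{0\eps xxxx}\to0$ in $L^2(\Om)$, which is attainable by choosing the approximations suitably.

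Next we differentiate $z_\eps$ along \eqref{approx}. We test the first equation, differentiated twice in $x$, against $w_{\eps xx}$; the second, differentiated three times, against $\frac1\tau D_\eps(\Theta_\eps)v_{\eps xxx}$ (after integrations by parts, which are justified by the Neumann conditions on $w_\eps,v_\eps,u_\eps$ and the smoothness from Lemma \ref{lem2.3}); and the third against $u_{\eps xxx}$. The principal terms $\frac1\tau\io (D_\eps(\Theta_\eps) v_{\eps xxx})_x w_{\eps xx}$ cancel against the $v$-part exactly as in \cite[Lemma 3.8]{CW26} and \cite{FW26}, leaving the dissipative contributions $-\eps\io w_{\eps xxx}^2-\frac{\eps}{\tau}\io D_\eps v_{\eps xxxx}^2$ together with lower-order remainders. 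These remainders have the schematic forms $D_\eps''(\Theta_\eps)\Theta_{\eps x}^2v_{\eps xx}$, $D_\eps'(\Theta_\eps)\Theta_{\eps xx}v_{\eps xx}$, $D_\eps'(\Theta_\eps)\Theta_{\eps xxx}v_{\eps x}$, the analogous terms with $C_\eps,u_\eps$, and time-derivative terms $D_\eps'(\Theta_\eps)\Theta_{\eps t}v_{\eps xxx}^2$. Using \eqref{3.28}, the cubic-derivative bound on $D_\eps,C_\eps$ supplied by \eqref{5.1}, and Corollary \ref{cor2.10} (which gives $\Theta_\eps$ in $W^{2,\infty}$, $\Theta_{\eps t}$ in $L^\infty$, and $v_{\eps x},u_{\eps x}$ in $L^\infty$), all terms except those containing $\Theta_{\eps xxx}$ are bounded by $c(z_\eps+1)$. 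The $\Theta_{\eps xxx}$ terms, such as $\io D_\eps'(\Theta_\eps)\Theta_{\eps xxx}v_{\eps x}w_{\eps xx}$, are estimated by $c\|v_{\eps x}\|_{L^\infty}\|\Theta_{\eps xxx}\|_{L^2}\|w_{\eps xx}\|_{L^2}\le c\io\Theta_{\eps xxx}^2+c\,z_\eps$. If a term appears in which $\Theta_{\eps xxx}$ multiplies a third-order quantity of $v_\eps$ or $u_\eps$ rather than $w_{\eps xx}$, it is handled in the same way after using coercivity.

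This leads to the linear inequality $z_\eps'\le c(1+\io\Theta_{\eps xxx}^2)z_\eps+c(1+\io\Theta_{\eps xxx}^2)$ on $(0,T_\star)$. Since Lemma \ref{lem3.1} shows that $\int_0^{T_\star}\io\Theta_{\eps xxx}^2\le K$, Grönwall's lemma gives a bound on $z_\eps$ throughout $(0,T_\star)$, and hence \eqref{5.20} by coercivity. The role of $\eps_{\star\star\star}$ is to absorb the $\eps$-dependent cross terms, for instance $\eps\io C_\eps'\Theta_{\eps x}u_{\eps xxxx}v_{\eps xxx}$ or the term produced by the $\eps u_{\eps xx}$ diffusion in the $u$-equation, into the dissipative terms for small $\eps$.

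We expect the main obstacle to be the exact bookkeeping of these $\eps$-terms coming from the artificial diffusion in the third-order energy, in particular the mixed term $\frac1\tau\io C_\eps u_{\eps xxx}v_{\eps xxx}$ whose time derivative produces $\eps u_{\eps xxxxx}$-type contributions. Following \cite{FW26}, we would handle these by integrating by parts onto $v_{\eps xxxx}$ and absorbing them into $\frac{\eps k_1}{2\tau}\io v_{\eps xxxx}^2$, together with the extra $\eps$-weighted term in $z_\eps$. Notably, $g_\eps$ enters only through $\Theta_\eps$, and that dependence is already controlled by Lemma \ref{lem3.1} and Corollary \ref{cor2.10}.
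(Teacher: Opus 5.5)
Your proposal is correct and follows essentially the same route as the paper. The paper obtains Lemma \ref{lem3.2} by importing the third-order quasi-energy argument of \cite[Lemma 3.2 and Lemma 3.3]{FW26} under the substitution \eqref{e11}, with the space-time bound on $\Theta_{\eps xxx}$ from Lemma \ref{lem3.1} playing exactly the role it plays in your Gr\"onwall step. The optional $\eps$-weighted term, and with it the extra condition on $\sqrt{\eps}\,u_{0\eps xxxx}$, is not needed: since $v_{\eps xxx}=u_{\eps xxx}=0$ on $\partial\Om$, Young's inequality absorbs the cross term $-\frac{2\eps}{\tau}\io C_\eps(\Theta_\eps)u_{\eps xxxx}v_{\eps xxxx}$ into $\frac{\eps}{\tau}\io D_\eps(\Theta_\eps)v_{\eps xxxx}^2$ and $B'\eps\io u_{\eps xxxx}^2$ for every $\eps\in(0,1)$, so smallness of $\eps$ matters at most for the uniform bound on $z_\eps(0)$ coming from \eqref{5.2}.
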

\proof
This follows, under the substitution described in the Introduction, from \cite[Lemma 3.2 and Lemma 3.3]{FW26}, combined with the bound on $\Theta_{\eps xxx}$ provided by Lemma \ref{lem3.1}.
\qed
\subsection{Proof of Theorem \ref{theo2}}
Combining Lemma \ref{lem3.2} with suitable embedding properties available in the one-dimensional setting, we obtain the following regularity upgrade of the strong solution:
\begin{lem}\label{lem3.3}
Let $\eps_{\star\star\star}$ and $T_\star$ be as in Lemma \ref{lem3.2} and Lemma \ref{lem2.7}. Then the strong solution $(u,\Theta,g)$ obtained in Theorem \ref{theo1} has the additional properties that
\bea{5.21}
u\in C^0([0,T_\star];C^2(\ov\Om))\cap L^\infty((0,T_\star);W^{3,2}(\Om)),\label{10.8}\\[1mm]
u_t\in C^0([0,T_\star];C^2(\ov\Om))\cap L^\infty((0,T_\star);W^{3,2}(\Om)),\\[1mm]
u_{tt}\in C^0([0,T_\star];C^1(\ov\Om))\cap L^\infty((0,T_\star);W^{2,2}(\Om)),\\[1mm]
u_{ttt}\in C^0(\ov\Om\times[0,T_\star])\cap L^\infty((0,T_\star);W^{1,2}(\Om)),\label{10.11}
\eea
and $(u,\Theta,g)$ actually solves \eqref{0} in the classical sense in $\Om\times(0,T_\star)$.
\end{lem}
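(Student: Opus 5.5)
The plan is to transfer the $\eps$-independent bounds of Lemma \ref{lem3.2} to the limit $(u,\Theta,g)$ from Lemma \ref{lem2.13} by weak-$\star$ lower semicontinuity. Afterwards, continuity in time will follow from Aubin--Lions type compactness or interpolation, and the classical character of the equation for $u$ will be read off from the weak formulation \eqref{2.4} once enough regularity is known.

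First, by \eqref{5.20}, \eqref{3.84}, \eqref{3.85} and \eqref{3.83}, the families $(u_\eps)$ and $(v_\eps)$ are bounded in $L^\infty((0,T_\star);W^{3,2}(\Om))$, and $(w_\eps)$ is bounded in $L^\infty((0,T_\star);W^{2,2}(\Om))$, for $\eps=\eps_j<\eps_{\star\star\star}$. Passing to a further subsequence and using the identifications $u_\eps\to u$, $v_\eps\to u_t$, $w_\eps\to u_{tt}$ from \eqref{3.94}--\eqref{3.96}, we get $u,u_t\in L^\infty((0,T_\star);W^{3,2}(\Om))$ and $u_{tt}\in L^\infty((0,T_\star);W^{2,2}(\Om))$. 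For continuity in time we use the time-derivative bounds: $u_{\eps t}=\eps u_{\eps xx}+v_\eps$ and $v_{\eps t}=\eps v_{\eps xx}+w_\eps$ are bounded in $L^\infty((0,T_\star);W^{1,2}(\Om))$, because $\eps u_{\eps xx}$ and $\eps v_{\eps xx}$ are controlled in $W^{1,2}$ by the $W^{3,2}$ bounds. Aubin--Lions with the compact embedding $W^{3,2}(\Om)\hookrightarrow C^2(\ov\Om)$ then yields $u_\eps\to u$ and $v_\eps\to u_t$ in $C^0([0,T_\star];C^2(\ov\Om))$. For $w_\eps$ we combine the $W^{2,2}$ bound with the dual-space bound $w_{\eps t}\in L^\infty((0,T_\star);(W^{1,2}(\Om))^\star)$ from Lemma \ref{lem2.11}, obtaining strong convergence in $C^0([0,T_\star];C^1(\ov\Om))$. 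This gives the first three lines of \eqref{5.21}.

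Next, we address $u_{ttt}$. Since $D,C\in C^2$ and $\Theta_x\in L^\infty$, Remark ii) after Definition \ref{def2.1} gives a.e.
\[
\tau u_{ttt}=-u_{tt}+D(\Theta)u_{xxt}+D'(\Theta)\Theta_xu_{xt}+C(\Theta)u_{xx}+C'(\Theta)\Theta_xu_x .
\]
Every term on the right is now continuous on $\ov\Om\times[0,T_\star]$, because $u,u_t\in C^0([0,T_\star];C^2(\ov\Om))$ and $\Theta\in C^0([0,T_\star];C^1(\ov\Om))$. Hence $u_{ttt}\in C^0(\ov\Om\times[0,T_\star])$, and $u_{tt}$ is classically differentiable in $t$. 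To see $u_{ttt}\in L^\infty((0,T_\star);W^{1,2}(\Om))$, we differentiate the right-hand side in $x$. This produces $u_{xxxt}$ and $u_{xxx}$, which lie in $L^\infty L^2$, together with factors $\Theta_x,\Theta_{xx}\in L^\infty$ from \eqref{3.87} multiplying $u_{xt}$, $u_{xxt}$, $u_x$ and $u_{xx}$, all of which lie in $L^\infty$. The one delicate term is $D''(\Theta)\Theta_x^2u_{xt}$, which is fine since $D''$ is bounded on bounded sets. Then \eqref{0} holds pointwise for $u$. The equations for $\Theta$ and $g$ already hold classically by Lemma \ref{lem2.13}, and the boundary condition $u_x=0$ holds pointwise by the $C^2$ regularity.

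The main obstacle is the continuity-in-time step for $u_{tt}$ in $C^1(\ov\Om)$, since the available time-derivative bound for $w_\eps$ is only in $(W^{1,2})^\star$. If the Aubin--Lions triple $W^{2,2}\hookrightarrow C^1\hookrightarrow(W^{1,2})^\star$ turns out to be insufficient for the strong-convergence claim, the fallback is direct. We use that $u_{ttt}\in L^\infty L^2$ gives $u_{tt}\in C^0([0,T_\star];L^2(\Om))$, and then interpolate this with the uniform $W^{2,2}$ bound via Gagliardo--Nirenberg, which yields $u_{tt}\in C^0([0,T_\star];W^{s,2}(\Om))$ for every $s<2$, and hence $u_{tt}\in C^0([0,T_\star];C^1(\ov\Om))$.
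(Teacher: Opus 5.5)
Your argument is correct and is essentially the compactness argument the paper delegates to [FW26, Lemma 4.1]: you pass the $\eps$-independent bounds of Lemma \ref{lem3.2} to the limit via Aubin--Lions and lower semicontinuity, and then read off $u_{ttt}$ from the distributional form of the first equation in \eqref{0}. The step you flag as the main obstacle is in fact unproblematic: Simon's version of the Aubin--Lions lemma, applied to $W^{2,2}(\Om)\hookrightarrow C^1(\ov\Om)\hookrightarrow (W^{1,2}(\Om))^\star$ (first embedding compact) together with Lemma \ref{lem2.11}, already gives $w_\eps\to u_{tt}$ in $C^0([0,T_\star];C^1(\ov\Om))$, and your interpolation fallback would also work.
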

\proof
Arguing exactly as in \cite[Lemma 4.1]{FW26}, now based on Lemma \ref{lem3.2}, yields \eqref{10.8}-\eqref{10.11}.
\qed
Theorem \ref{theo2} now follows by the same extension argument and generalization to arbitrary initial data as in the proof of Theorem \ref{theo1}.\abs\abs
{\sc Proof of Theorem \ref{theo2}.\quad}For initial data satisfying \eqref{---}, we let
$$\tm:=\sup\Big\{T>0\ \Big|\ \text{There exists a classical solution of \eqref{0} in }\Om\times(0,T)\Big\}.$$ By Lemma \ref{lem2.2}, there is exactly one classical solution $(u,\Theta,g)$ of \eqref{0} in $\Om\times(0,\tm)$. Suppose this solution violated \eqref{1.25}. Then $\tm<\infty$ and
$$\|u_t\|_{W^{3,2}(\Om)}+\|u_{tt}\|_{W^{2,2}(\Om)}+\|\Theta\|_{W^{2,\infty}(\Om)}\le c_1\qquad\text{for all }t\in(t_0,\tm)$$ with some $c_1>0$ and some $t_0\in(0,\tm)$. Thus, analogously as in the proof of Theorem \ref{theo1}, $u$ remains bounded in $W^{3,2}(\Om)$ and $g$ in $W^{1,2}(\Om)$ on $(t_0,\tm)$. On this basis we may fix $t_j\nearrow\tm$ as $j\to\infty$, along which
\be{nn}
	\lball
	u(\cdot,t_j)\rightharpoonup \tilde{u}_0&\qquad\text{in }W^{3,2}(\Om)\qquad\text{as }j\to\infty,\\[1mm]
	u_t(\cdot,t_j)\rightharpoonup\tilde{u}_{0t}&\qquad\text{in }W^{3,2}(\Om)\qquad\text{as }j\to\infty,\\[1mm]
	u_{tt}(\cdot,t_j)\rightharpoonup\tilde{u}_{0tt}&\qquad\text{in }W^{2,2}(\Om)\qquad\text{as }j\to\infty,\\[1mm]
	g(\cdot,t_j)\rightharpoonup\tilde{g}_0&\qquad\text{in }W^{1,2}(\Om)\qquad\text{as }j\to\infty,\qquad\text{and that}\\[1mm]
	\Theta(\cdot,t_j)\stackrel{\ast}{\rightharpoonup}\tilde{\Theta}_0&\qquad\text{in }W^{2,\infty}(\Om)\qquad\text{as }j\to\infty,
	\ear
\ee
with $\tilde{u}_0, \tilde{u}_{0t}\in W^{3,2}(\Om)$, $\tilde{u}_{0tt}\in W^{2,2}(\Om)$, $\tilde{g}_0\in W^{1,2}(\Om)$ and $\tilde{\Theta}_0\in W_N^{2,\infty}(\Om;[0,\infty))$. Applying Lemma \ref{lem2.13} and Lemma \ref{lem3.3} to these data, we obtain $T>\tm$ together with a classical solution of \eqref{0} in $\Om\times(\tm,T)$ attaining these data at $t=\tm$; gluing this classical solution to $(u,\Theta,g)$ across $t=\tm$ then produces a classical solution of \eqref{0} on all of $\Om\times(0,T)$, contradicting the maximality of $\tm$.\abs
For general initial data, the reduction proceeds exactly as in the proof of Theorem \ref{theo1}: writing $\tilde{u}_0:=u_0-y_0$, $\tilde{u}_{0t}:=u_{0t}-y_1$ and $\tilde{u}_{0tt}:=u_{0tt}-y_2$, with $y_0$, $y_1$ and $y_2$ as in \eqref{4.1}, applying the above to these normalized data, and adding back the polynomial correction $y(t)$ solving $\tau y'''(t)+y''(t)=0$ in $(0,\infty)$ with $y(0)=y_0$, $y'(0)=y_1$ and $y''(0)=y_2$, yields the desired classical solution of \eqref{0} on $\Om\times(0,\tm)$ satisfying \eqref{1.21}-\eqref{1.25}.
\qed
\bigskip
{\bf Acknowledgment.} \quad
The author acknowledges support of the Deutsche Forschungsgemeinschaft (Project No. 444955436).
He moreover declares that he has no conflict of interest.\abs
{\bf Data availability statement.} \quad
Data sharing is not applicable to this article as no datasets were
generated or analyzed during the current study.

\end{document}